\g@addto@macro\th@plain{\thm@headpunct{}}
\theoremstyle{plain}
\newtheorem*{theorem*}{Theorem}
\newtheorem{theorem}{Theorem}
\newtheorem{lemma}{Lemma}[section]
\newtheorem{proposition}[lemma]{Proposition}
\newtheorem{definition}[lemma]{Definition}
\newtheorem{remark}[lemma]{Remark}
\newtheorem{example}[lemma]{Example}
\newtheorem{assumption}{Assumption}
\renewenvironment{proof}[1][Proof]{\begin{trivlist}
\item[\hskip \labelsep {\bfseries #1}]}{\end{trivlist}}
\newcommand{\Extinction}{\emph{Extinction}}
\newcommand{\Explosion}{\emph{Explosion}}
\newcommand{\InfNo}{\emph{Infinite lifetime - no extinction of ancestors}}
\newcommand{\InfPos}{\emph{Infinite lifetime - possible extinction of ancestors}}
\def\cqfd{ \hfill $\blacksquare$}
\DeclareMathOperator{\Coag}{Coag}
\newcommand{\scK}{\textsc{K}}
\newcommand{\rmm}{\mathrm{m}}
\newcommand{\rme}{\mathrm{e}}
\newcommand{\rmr}{\mathrm{r}}
\newcommand{\tun}{\mathtt{1}}
\newcommand{\tO}{\mathtt{O}}
\newcommand{\rF}{\mathrm{F}}
\newcommand{\rH}{\mathrm{H}}
\newcommand{\rL}{\mathrm{L}}
\newcommand{\rS}{\mathrm{S}}
\newcommand{\rT}{\mathrm{T}}
\newcommand{\rX}{\mathrm{X}}
\newcommand{\rY}{\mathrm{Y}}
\newcommand{\rZ}{\mathrm{Z}}
\newcommand{\bbN}{\mathbb{N}}
\newcommand{\cB}{\mathcal{B}}
\title{Genealogy of flows of continuous-state branching processes via flows of partitions and the Eve property}
\author{Cyril \textsc{Labb\'e}\thanks{Laboratoire de Probabilit\'es et Mod\`eles Al\'eatoires, Universit\'e Pierre et Marie Curie (Paris 6)}}
\begin{document}
%\doublespacing
\maketitle
\vspace{-0.8cm}
\begin{abstract}
We encode the genealogy of a continuous-state branching process associated with a branching mechanism $\Psi$ - or $\Psi$-CSBP in short - using a stochastic flow of partitions. This encoding holds for all branching mechanisms and appears as a very tractable object to deal with asymptotic behaviours and convergences. In particular we study the so-called Eve property - the existence of an ancestor from which the entire population descends asymptotically - and give a necessary and sufficient condition on the $\Psi$-CSBP for this property to hold. Finally, we show that the flow of partitions unifies the lookdown representation and the flow of subordinators when the Eve property holds.
\end{abstract}
\renewcommand{\abstractname}{R\'esum\'e}
\begin{abstract}
Nous construisons la g\'en\'ealogie d'un processus de branchement \`a espace d'\'etats et temps continus associ\'e \`a un m\'ecanisme de branchement $\Psi$ - ou $\Psi$-CSBP - \`a l'aide d'un flot stochastique de partitions. Cette construction est valable quel que soit le m\'ecanisme de branchement et permet de d\'efinir un objet remarquablement efficace pour \'etudier les comportements asymptotiques et les convergences. En particulier, nous \'etudions la propri\'et\'e d'Eve - l'existence d'un anc\^etre dont descend asymptotiquement toute la population - et donnons une condition n\'ecessaire et suffisante sur le $\Psi$-CSBP pour que cette propri\'et\'e soit v\'erifi\'ee. Finalement, nous montrons que le flot de partitions unifie la repr\'esentation lookdown et le flot de subordinateurs lorsque la propri\'et\'e d'Eve est v\'erifi\'ee.
\end{abstract}
\let\thefootnote\relax\footnotetext[1]{\textit{\href{http://www.ams.org/msc/}{MSC 2010 subject classifications.}} Primary 60J80; secondary 60G09, 60J25}
\let\thefootnote\relax\footnotetext[2]{\textit{Key words and phrases.} Continuous-state branching process, Measure-valued process, Genealogy, Partition, Stochastic flow, Lookdown process, Subordinator, Eve}

\section{Introduction}
A continuous-state branching process (CSBP for short) is a Markov process $(\rZ_t,t \geq 0)$ that takes its values in $[0,+\infty]$ and fulfils the branching property: for all $z,z' \in [0,+\infty]$, $(\rZ_t+\rZ'_t,t\geq 0)$ is a CSBP, where $(\rZ_t,t\geq 0)$ and $(\rZ'_t,t\geq 0)$ are two independent copies started from $z$ and $z'$ respectively. Such a process describes the evolution of an initial population size $\rZ_0$, and the branching property implies that two disjoint subpopulations have independent evolutions. To alleviate notation, we will implicitly consider an initial population size $\rZ_0 = 1$. A CSBP has a Feller semigroup entirely characterized by a convex function $\Psi$ called its branching mechanism, so we will write $\Psi$-CSBP to designate the corresponding distribution. The Feller property entails the existence of a c\`adl\`ag modification, still denoted $(\rZ_t,t\geq 0)$ and thus allows to define the lifetime of $\rZ$ as the stopping time
\begin{equation*}
\rT := \inf\{t\geq 0: \rZ_t \notin(0,\infty)\}
\end{equation*}
with the convention $\inf\emptyset = \infty$. The denomination \textit{lifetime} is due to the simple fact that both $0$ and $\infty$ are absorbing states.\\

The process $\rZ$ can be seen as the total-mass of a measure-valued process $(\rmm_t,t\in[0,\rT))$ on $[0,1]$ (or any compact interval), started from the Lebesgue measure on $[0,1]$ and such that for all $x\in[0,1]$, $(\rmm_t([0,x]),t\geq 0)$ and $(\rmm_t((x,1]),t\geq 0)$ are two independent $\Psi$-CSBP corresponding to the sizes of the subpopulations started from $[0,x]$ and $(x,1]$ respectively. The process $\rmm$ is called a measure-valued branching process or $\Psi$-MVBP for short. Note that when $\rZ_\rT = \infty$, the measure is no longer finite and therefore we set $\rmm_\rT = \Delta$, see Subsection \ref{SubsectionMVBP} for further details.
\begin{definition}\label{DefEveProperty}
We say that the branching mechanism $\Psi$ satisfies the Eve property if and only if there exists a random variable $\rme$ in $[0,1]$ such that
\begin{equation}
\frac{\rmm_{t}(dx)}{\rmm_{t}([0,1])} \underset{t \uparrow \rT}{\longrightarrow} \delta_{\rme}(dx)\mbox{ a.s.}
\end{equation}
in the sense of weak convergence of probability measures. The r.v. $\rme$ is called the primitive Eve of the population.
\end{definition}
This property means that a fraction asymptotically equal to $1$ of the population descends from a single individual located at $\rme$ as $t$ gets close to the lifetime $\rT$. This property seems to have never been studied before, except by Tribe~\cite{Tribe92} in the case of the Feller diffusion with a spatial motion. From the branching property, we will show that $\rme$ is necessarily uniform$[0,1]$, when the Eve property is verified. The goal of the present paper is to study this Eve property in connection with the genealogy of the $\Psi$-CSBP. Note that the complete classification of the asymptotic behaviour of $\frac{\rmm_{t}(dx)}{\rmm_{t}([0,1])}$ will be provided in a forthcoming work~\cite{DuquesneLabbe13}.\\

A CSBP describes the evolution of the population size, but does not provide clear information on the genealogy. In recent years, several approaches have been proposed to study the genealogical structure: one can cite the historical superprocess of Dawson and Perkins~\cite{DawsonPerkins91}, the continuum random tree introduced by Aldous in~\cite{AldousCRT1}, the L\'evy trees of Le Gall, Le Jan and Duquesne~\cite{DLG02,LeGallLeJan98}, we also refer to~\cite{BertoinLeGall-1,GrevenPfaffelhuberWinter12,GrevenPopovicWinter09} for the genealogy of related population models. This paper presents a new object, called a stochastic flow of partitions associated with a branching mechanism, that unifies two well-known approaches: the flow of subordinators of Bertoin and Le Gall~\cite{BertoinLeGall-0} and the lookdown representation of Donnelly and Kurtz~\cite{DK99}. Let us mention that this object focuses on the genealogical structure, and does not pay attention to the genetic types carried by the individuals: hence it would not be appropriate to deal with more elaborate models incorporating mutations or spatial motions. We first introduce this object and its relationships with these two representations, before presenting the connection with the Eve property.\\

As mentioned above, the population size does not define in itself the genealogy. Therefore we start from a c\`adl\`ag $\Psi$-CSBP $(\rZ_t,t\in [0,\rT))$ and enlarge the probability space in order to add more information to this process. This is achieved by defining a random point process ${\cal P}$ with values in $[0,\rT)\times\mathscr{P}_{\infty}$, where $\mathscr{P}_{\infty}$ stands for the space of partitions of the integers $\mathbb{N}$. To each jump $(t,\Delta \rZ_t)$ of the CSBP is associated a point $(t,\varrho_t)$ in $\cal P$ such that the random partition $\varrho_t$ is distributed according to the paint-box scheme with mass-partition $(\frac{\Delta \rZ_t}{\rZ_t},0,0,\ldots)$, see Subsection \ref{SubsectionPartitions} for a precise definition of the paint-box scheme. The genealogical interpretation is the following: $(t,\Delta \rZ_t)$ corresponds to a reproduction event where a parent, chosen uniformly among the population alive at time $t-$, gives birth to a subpopulation of size $\Delta \rZ_t$; therefore a fraction $\frac{\Delta \rZ_t}{\rZ_t}$ of the individuals at time $t$ descends from this parent. In addition, when $\rZ$ has a diffusion part, ${\cal P}$ contains points of the form $(t,\tun_{\{i,j\}})$ that model binary reproduction events, that is, events where an individual $i$ at time $t-$ is the parent of two individuals $i$ and $j$ at time $t$, the partition $\tun_{\{i,j\}}$ having a unique non-singleton block $\{i,j\}$, with $i < j$. A precise definition of the point process ${\cal P}$ will be given in Subsection \ref{SubsectionStoFlow}, but it should be seen as an object that collects all the \textit{elementary reproduction events} as time passes.\\
We then introduce a collection of random partitions $(\hat{\Pi}_{s,t},0 \leq s \leq t < \rT)$ by "composing" the partitions contained in ${\cal P}$. In order not to burden this introduction, we do not provide the precise definition of these partitions but, roughly speaking, $\hat{\Pi}_{s,t}$ is the result of the composition forward-in-time of all the elementary reproduction events provided by ${\cal P}$ on the interval $(s,t]$. Therefore the partitions collect the following information
\begin{itemize}
\item Backward-in-time : the process $s\mapsto \hat{\Pi}_{t-s,t}$ gives the genealogy of the population alive at time $t$.
\item Forward-in-time : the process $s\mapsto \hat{\Pi}_{t,t+s}$ gives the descendants of the population alive at time $t$.
\end{itemize}
$(\hat{\Pi}_{s,t},0 \leq s \leq t < \rT)$ is called a $\Psi$ flow of partitions and $\rZ$ its \textit{underlying $\Psi$-CSBP}.\\\\
\textit{Connection with the lookdown representation}\\
This object is intimately related to the lookdown representation of Donnelly and Kurtz~\cite{DK99}. A lookdown process is a particle system entirely characterized by a sequence of \textit{initial types}, that provides a sampling of the initial population, and a so-called \textit{lookdown graph}, that stands for the genealogical structure. In a previous work~\cite{Labbe11}, we showed that the flow of partitions formalizes and clarifies the notion of lookdown graph which was implicit in the lookdown construction of Donnelly and Kurtz~\cite{DK99}. To complete the picture of the lookdown construction, note that the limiting empirical measure of the particle system at time $t$, say $\Xi_t$, is a probability measure such that the process $\rZ\cdot\Xi$ is a $\Psi$-MVBP, see Section \ref{SectionFlow} for further details.\\\\
\textit{Connection with the flow of subordinators}\\
It is well-known that the process $x\mapsto \rmm_t([0,x])$ is a subordinator whose Laplace exponent $u_t(\cdot)$ is related to the branching mechanism $\Psi$ via forthcoming Equation (\ref{EquationUt}). In addition, the branching property ensures that $\rmm_{t+s}$ is obtained by composing the subordinator $\rmm_t$ with an independent subordinator distributed as $\rmm_s$. This is the key observation that allowed Bertoin and Le Gall~\cite{BertoinLeGall-0} to describe the genealogy of the $\Psi$-MVBP with a collection of subordinators. Formally, a $\Psi$ flow of subordinators $(\rS_{s,t}(a),0 \leq s \leq t, a \geq 0)$ is a collection of random processes that verify
\begin{itemize}
\item For every $0 \leq s \leq t$, $(\rS_{s,t}(a),a\geq 0)$ is a subordinator with Laplace exponent $u_{t-s}$.
\item For every integer $p\geq 2$ and $0 \leq t_1\leq \ldots \leq t_p$, the subordinators $\rS_{t_1,t_2},\ldots,\rS_{t_{p-1},t_p}$ are independent and
\begin{equation*}
\rS_{t_1,t_p}(a) = \rS_{t_{p-1},t_p}\circ\ldots\circ \rS_{t_1,t_2}(a),\ \forall a \geq 0\mbox{ a.s. (cocycle property)}
\end{equation*}
\item For all $a \geq 0$, $(\rS_{0,t}(a),t\geq 0)$ is a $\Psi$-CSBP started from $a$.
\end{itemize}
Each subordinator $[0,1] \ni x \mapsto \rS_{0,t}(x)$ can be seen as the distribution function of a random measure $\rmm_{0,t}$ on $[0,1]$ so that $(\rmm_{0,t},t\geq 0)$ forms a $\Psi$-MVBP. In particular, $\rS_t := \rS_{0,t}(1)$ is its total-mass process and one can define $\rT^\rS$ as its lifetime. Hence, all the relevant information about this initial population $[0,1]$ is contained into the flow $(\rS_{s,t}(a),0 \leq s \leq t,0 \leq a \leq \rS_s)$. Fix $0 \leq s < t < \rT^\rS$ and consider a point $a\in[0,\rS_s]$ such that $\rS_{s,t}(a)-\rS_{s,t}(a-) > 0$. Bertoin and Le Gall interpreted $a$ as an ancestor alive at time $s$ and $\rS_{s,t}(a)-\rS_{s,t}(a-)$ as its progeny alive at time $t$. We show that our collection of partitions actually formalizes this genealogical structure. To state this result we use the notation $\mathscr{P}(\rS_{s,t})$ that stands for the paint-box distribution based on the mass-partition obtained from the rescaled jumps $\{\frac{\rS_{s,t}(a)-\rS_{s,t}(a-)}{\rS_t},a\in[0,\rS_s]\}$ of the subordinator $\rS_{s,t}$, here again we refer to Subsection \ref{SubsectionPartitions} for a precise definition.
\begin{theorem}\label{TheoremFoP}
The collection of partitions $(\hat{\Pi}_{s,t},0 \leq s \leq t < \rT)$, together with its underlying CSBP $\rZ$, satisfies
\begin{itemize}
\item For all $n\geq 1$ and all $0 < t_1 < \ldots < t_n$, 
\begin{equation*}
(\rZ_{t_1},\ldots,\rZ_{t_n},\hat{\Pi}_{0,t_1},\ldots,\hat{\Pi}_{t_{n-1},t_n}|t_n < \rT)\! \stackrel{(d)}{=}\! (\rS_{t_1},\ldots,\rS_{t_n},\mathscr{P}(\rS_{0,t_1}),\ldots,\mathscr{P}(\rS_{t_{n-1},t_n})|t_n<\rT^\rS)
\end{equation*}
\item For all $0 \leq r < s < t < \rT$, a.s. $\hat{\Pi}_{r,t} = \Coag(\hat{\Pi}_{s,t},\hat{\Pi}_{r,s})$ (cocycle property).
\end{itemize}
\end{theorem}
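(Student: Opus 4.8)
The plan is to establish the two assertions separately. The cocycle identity should follow almost directly from the construction, so I would dispatch it first. Since $(r,t]=(r,s]\sqcup(s,t]$ and $\hat\Pi_{s,t}$ is by definition the forward-in-time composition of the elementary reproduction events carried by $\cal P$ on $(s,t]$, composing the events on $(r,s]$ and then those on $(s,t]$ must reproduce the composition over all of $(r,t]$. Because this composition of partitions is realized precisely by the coagulation operator, the associativity of $Coag$ yields $\hat\Pi_{r,t}=Coag(\hat\Pi_{s,t},\hat\Pi_{r,s})$ pathwise. The only point needing care is that the relevant products of partitions are well defined, which should already have been secured when $\hat\Pi$ was constructed in Subsection~\ref{SubsectionStoFlow}.

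The finite-dimensional identity carries the bulk of the work, and the key structural observation is that both sides are inhomogeneous Markov chains in an enlarged, partition-valued state space. On the right-hand side this comes from combining the cocycle property with the independence of the subordinators $S_{t_{k-1},t_k}$: conditionally on $S_{t_{k-1}}$, the pair $(S_{t_k},\mathscr{P}(S_{t_{k-1},t_k}))$ is a measurable functional of $S_{t_{k-1},t_k}$ restricted to $[0,S_{t_{k-1}}]$ alone, hence independent of the past. On the left-hand side the same structure comes from the branching property of $Z$ together with the fact that $\hat\Pi_{t_{k-1},t_k}$ depends only on the reproduction events of $\cal P$ lying in $(t_{k-1},t_k]$, which are driven by the increments of $Z$ there. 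It therefore suffices to identify the one-step transition kernels, i.e.\ to show that for each $z>0$ and $\tau>0$ the law of $(Z_\tau,\hat\Pi_{0,\tau})$ issued from total mass $z$ coincides with that of $(S_\tau,\mathscr{P}(S_{0,\tau}))$ issued from $S_0=z$. The conditionings on $\{t_n<T\}$ and $\{t_n<T^S\}$ are compatible because both total-mass processes are $\Psi$-CSBP and hence share the law of their lifetime.

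For this single-interval identity the total-mass marginals already agree, since $(S_{0,t}(z),t\ge 0)$ is by definition a $\Psi$-CSBP issued from $z$, exactly like $Z$; it remains to match the partitions after conditioning on the total-mass trajectory. Here $\hat\Pi_{0,\tau}$ is obtained by coagulating, in chronological order, the paint-box partitions attached to the jumps $(u,\Delta Z_u)$ of $Z$ (with mass-partition $(\Delta Z_u/Z_u,0,\dots)$) together with the binary events produced by the diffusion part, whereas $\mathscr{P}(S_{0,\tau})$ is a single paint-box read off the rescaled jumps of the subordinator $S_{0,\tau}$. The crux is thus a two-fold matching: first, a combinatorial lemma asserting that the ordered coagulation of independent paint-boxes is again a paint-box, whose mass-partition is the one obtained by composing the individual mass-partitions, so that the accumulated genealogy is identified with the jump structure of a \emph{single} subordinator; second, the distributional identification of the jumps of $S_{0,\tau}$ with the reproduction structure of $Z$ accumulated over $[0,\tau]$, governed by the relation between $\Psi$ and the cumulant semigroup $u_\tau$ of Equation~(\ref{EquationUt}).

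I expect this second matching to be the main obstacle. The difficulty is that on the $Z$-side the genealogy is built event by event forward in time — the diffusion part in particular contributes infinitely many infinitesimal binary coalescences over any interval — while on the subordinator side it is presented all at once through the jumps of $S_{0,\tau}$. Reconciling the two amounts to showing that integrating the elementary reproduction events against the L\'evy structure of $Z$ over $[0,\tau]$ reproduces exactly the L\'evy measure of $S_{0,\tau}$ and the paint-box it generates. I would handle this by realizing both objects from a common Poisson point process governed by $\Psi$ and verifying that the resulting mass-partitions agree, or, failing a clean coupling, by comparing the sampling probabilities of the two partitions on each finite restriction $[n]=\{1,\dots,n\}$ and invoking the consistency of the paint-box construction to pass to the partition of $\mathbb{N}$.
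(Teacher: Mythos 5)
Your treatment of the cocycle property and your reduction of the finite-dimensional identity to a single transition kernel are both sound and match the paper: the cocycle identity is indeed immediate from the construction by chronological coagulation, and the paper likewise proves the multi-time statement by induction using the Markov property of the total-mass process and the chain structure of $(S_{t_i},\mathscr{P}(S_{t_{i-1},t_i}))$. The problem is that the one-step identity --- that conditionally on $\{\tau<T\}$ the partition $\hat{\Pi}_{0,\tau}$ is a paint-box based on a subordinator with Laplace exponent $u_\tau$ --- is exactly where all the work lies, and your proposal stops at naming it "the main obstacle" and listing two possible strategies without executing either. The combinatorial half of your plan (ordered coagulation of independent paint-boxes is a paint-box directed by the composed mass-partitions) is fine for finitely many events, but it does not by itself cover the two genuinely hard features of the construction: the diffusion part contributes infinitely many binary events $\pi_{\{i,j\}}$, none of which is a paint-box with a nontrivial mass-partition, and when $\nu$ is infinite the jump events accumulate, so a limiting argument is unavoidable. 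The analytic half --- showing that the accumulated reproduction structure over $[0,\tau]$ reproduces precisely the L\'evy measure $w_\tau$ and drift $d_\tau$ of $S_{0,\tau}$ --- is asserted as something to be "verified" but never is; as written, the proof is incomplete at its central step.

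The paper avoids this computation entirely by routing through the lookdown representation: it takes i.i.d.\ uniform initial types $(U_i)_{i\geq 1}$, forms the limiting empirical measures $(\Xi_{0,t})=\mathscr{E}_0(\hat{\Pi},(U_i)_{i\geq 1})$, and invokes two imported facts --- the exchangeability of the lookdown particle system (Donnelly--Kurtz), which makes $\hat{\Pi}_{0,\tau}$ a paint-box on the subordinator $x\mapsto\Xi_{0,\tau}([0,x])$, and the result of Birkner et al.\ that $Z_t\cdot\Xi_{0,t}$ is a $\Psi$-MVBP, which identifies the law of that subordinator with that of $S_{0,\tau}$. Since the paint-box law depends only on the mass-partition, the one-step identity follows with no direct comparison of L\'evy measures. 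If you want to complete your argument without this detour, you must actually carry out one of your two proposed couplings, including the treatment of the diffusion part and of accumulating small jumps; until then there is a genuine gap.
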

Note that the operator $\Coag$ is a composition operator for partitions, see Section 4.2 in~\cite{BertoinRandomFragmentation} or Subsection \ref{SubsectionPartitions} of the present paper.\\\\
\textit{Main results}\\
We now study the connection between the Eve property and the genealogy. To alleviate notation, we set $\hat{\Pi}_t := \hat{\Pi}_{0,t}$ and let $\sigma$ be the diffusion coefficient appearing in the branching mechanism $\Psi$. In addition, we let $\tun_{[\infty]} := \{\{1,2,3,\ldots\}\}$ denote the partition with a unique block containing all the integers.
\begin{theorem}\label{TheoremEquivalence}
There exists an exchangeable partition $\hat{\Pi}_{\rT}$ such that $\hat{\Pi}_{t} \rightarrow \hat{\Pi}_{\rT}$ almost surely as $t \uparrow \rT$. Moreover, these three assumptions are equivalent\begin{enumerate}[i)]
\item $\Psi$ satisfies the Eve property.
\item $\hat{\Pi}_{\rT} = \tun_{[\infty]}$ a.s.
\item $\displaystyle\sum_{\{s < \rT : \Delta \rZ_s > 0\}}\Big(\frac{\Delta \rZ_s}{\rZ_s}\Big)^2+\int_{0}^{\rT}\frac{\sigma^2}{\rZ_s}ds = \infty$ a.s.
\end{enumerate}
\end{theorem}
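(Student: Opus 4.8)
The plan is to first establish the almost-sure convergence of $\hat{\Pi}_t$ to an exchangeable limit $\hat{\Pi}_T$, and then prove the triple equivalence. For the convergence, I would exploit the cocycle (coalescence) property from Theorem~\ref{TheoremFoP}: since $\hat{\Pi}_t = Coag(\hat{\Pi}_{s,t},\hat{\Pi}_s)$ for $s<t$, the map $t\mapsto\hat{\Pi}_t$ is coarsening in an appropriate sense — each block of $\hat{\Pi}_s$ is a union of blocks of $\hat{\Pi}_t$ read backwards, so restricted to any fixed finite set $[n]=\{1,\ldots,n\}$ the number of blocks of $\hat{\Pi}_t|_{[n]}$ is nonincreasing in $t$. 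A nonincreasing integer-valued process stabilizes, so $\hat{\Pi}_t|_{[n]}$ converges a.s. as $t\uparrow T$; taking a countable intersection over $n$ and using a diagonal/consistency argument yields a limiting partition $\hat{\Pi}_T$. Exchangeability of $\hat{\Pi}_T$ follows from the exchangeability of each $\hat{\Pi}_t$ (a consequence of the paint-box construction) passing to the limit.

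For the equivalence, the natural route is $(iii)\Leftrightarrow(ii)\Leftrightarrow(i)$. The link between $(ii)$ and $(iii)$ is the most combinatorial: by exchangeability and de Finetti/Kingman theory, $\hat{\Pi}_T = 1_{[\infty]}$ a.s. is equivalent to asymptotic frequency $1$ for a single block, which in turn should be governed by whether integers $1$ and $2$ eventually lie in the same block. I would compute the probability that $1$ and $2$ remain separated, conditionally on the CSBP trajectory $Z$. Each reproduction event $(t,\varrho_t)$ with $\Delta Z_t>0$ merges $1$ and $2$ with probability $(\Delta Z_t/Z_t)^2$ (both land in the new block under the paint-box with mass-partition $(\Delta Z_t/Z_t,0,\ldots)$), and each binary event contributes a diffusive merging at rate proportional to $\sigma^2/Z_{s}$. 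Using the independence structure of the Poissonian point process $\cal P$, the conditional probability that $1,2$ are never merged on $[0,T)$ is essentially $\exp\!\big(-\sum_{s<T}(\Delta Z_s/Z_s)^2 - \int_0^T \sigma^2/Z_s\,ds\big)$ up to a Borel--Cantelli-type argument handling the discrete product versus the sum; hence this probability is $0$ a.s. precisely when the series in $(iii)$ diverges a.s. By exchangeability, all pairs merging is equivalent to $\hat{\Pi}_T=1_{[\infty]}$, giving $(ii)\Leftrightarrow(iii)$.

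For $(i)\Leftrightarrow(ii)$, I would use the identity in law from Theorem~\ref{TheoremFoP} connecting the flow of partitions to the flow of subordinators and the MVBP. The key translation is that the limiting empirical measure $\Xi_t$ of the lookdown particle system satisfies $Z\cdot\Xi = m$ (the MVBP), so the normalized MVBP $m_t/m_t([0,1])$ equals $\Xi_t$. The Eve property — convergence of $m_t/m_t([0,1])$ to $\delta_{\mathbf e}$ — then corresponds, through the paint-box correspondence between mass-partitions and empirical measures, to the statement that the asymptotic frequency of the largest block of $\hat{\Pi}_t$ tends to $1$, i.e. $\hat{\Pi}_T = 1_{[\infty]}$. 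Concretely, the ranked asymptotic frequencies of $\hat{\Pi}_t$ are the ranked atom-masses of $\Xi_t$, so a single block of full mass is the same event as a single atom of full mass in the limiting normalized measure.

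The hard part, I expect, will be the rigorous $(i)\Leftrightarrow(ii)$ passage, specifically controlling the joint convergence of \emph{all} ranked frequencies simultaneously as $t\uparrow T$, rather than one coordinate at a time, and ensuring that the a.s.\ convergence of $\hat{\Pi}_t$ transfers to a.s.\ convergence of the normalized measure $m_t/m_t([0,1])$ on the correct probability space (the identity in Theorem~\ref{TheoremFoP} is only in distribution along finite time-marginals, so one must upgrade it to an almost-sure statement about the limit, likely via the explicit lookdown coupling where $m$ and $\hat{\Pi}$ live together). The $(ii)\Leftrightarrow(iii)$ equivalence, while computational, carries a secondary subtlety: justifying that conditioning on the trajectory of $Z$ is legitimate and that the Borel--Cantelli estimate correctly identifies divergence of the quadratic-variation-type series with a.s.\ coalescence of every pair.
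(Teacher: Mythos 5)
Your overall strategy matches the paper's: the equivalence $ii)\Leftrightarrow iii)$ via the conditional probability that $1$ and $2$ are never merged, namely $\prod_{s<T}(1-(\Delta Z_s/Z_s)^2)\,e^{-\int_0^T\sigma^2/Z_s\,ds}$, and the equivalence $i)\Leftrightarrow ii)$ via the lookdown coupling, using that $\Xi_t(\{\xi_0(1)\})=|\hat{\Pi}_t(1)|$ and that exchangeability reduces everything to the frequency of the first block (incidentally, this makes your worry about joint convergence of \emph{all} ranked frequencies unnecessary: the paper only needs $\mathbb{P}(\hat{\Pi}_t^{[n]}=1_{[n]}\,\big|\,|\hat{\Pi}_t(1)|)=|\hat{\Pi}_t(1)|^{n-1}$). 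However, your proof of the preliminary convergence statement has a genuine gap. It is true that the number of blocks of $\hat{\Pi}_t^{[n]}$ is nonincreasing in $t$, but a stabilized block count does not force the partition to stabilize, because $t\mapsto\hat{\Pi}_t$ is \emph{not} monotone for the refinement order: blocks can split and recombine. Concretely, if $\hat{\Pi}_{0,s}^{[3]}=\{\{1\},\{2,3\}\}$ and the next elementary event is $\pi_{\{1,2\}}$, then $Coag(\pi_{\{1,2\}},\hat{\Pi}_{0,s})^{[3]}=\{\{1,2\},\{3\}\}$ — still two blocks, but a different partition, and the previously merged pair $\{2,3\}$ has come apart. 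So "nonincreasing integer-valued process stabilizes" does not yield convergence of $\hat{\Pi}_t^{[n]}$. The paper instead proves convergence through a dichotomy tied to condition $iii)$: on $\{\sum(\Delta Z_s/Z_s)^2+\int_0^T\sigma^2/Z_s\,ds<\infty\}$ a Borel--Cantelli argument shows only finitely many points of ${\cal P}$ fall in $[0,T)\times\mathscr{P}_n^*$, so $\hat{\Pi}_t^{[n]}$ is eventually constant; on the complementary event it shows $\hat{\Pi}_t^{[n]}$ hits the absorbing state $1_{[n]}$ strictly before $T$.

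The same non-monotonicity undermines your step from "every pair merges at some time" to "$\hat{\Pi}_T=1_{[\infty]}$". A merge of a generic pair $\{i,j\}$ is not permanent (see the example above with $\{2,3\}$); only initial segments behave well — in particular $\{\hat{\Pi}_t^{[n]}=1_{[n]}\}$ is absorbing, and the pair $\{1,2\}$ merges exactly when an event's block contains both $1$ and $2$, after which it stays merged. This is why the paper's Step 2 runs a recursion on the stopping times $t_i:=\inf\{t:\hat{\Pi}_t^{[i]}=1_{[i]}\}$: once $\hat{\Pi}^{[i-1]}_{t_{i-1}}=1_{[i-1]}$, one waits for a reproduction event whose block joins $i$ to some element of $[i-1]$, which then produces $1_{[i]}$, and the divergence in $iii)$ (restarted after $t_{i-1}$) guarantees such an event occurs before $T$. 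Your sketch, which treats a single pair and then invokes exchangeability, is fine for the converse direction $ii)\Rightarrow iii)$ (positive probability that $1$ and $2$ never merge) and would also close the forward direction \emph{provided} the limit $\hat{\Pi}_T$ is already known to exist and be exchangeable — but that existence is exactly what your first step fails to deliver. So the missing ingredient is the structural fact that the only stable configuration is $1_{[n]}$, together with the recursive argument forcing the restricted flow into that state when the series diverges.
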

This result allows to define $\hat{\Pi}_{t}:=\hat{\Pi}_{\rT}$ for all $t\geq \rT$.\\

If there are individuals who do not share their ancestors with any other individuals then the partition has singleton blocks: we say that the partition has dust. It is well-known that for coalescent processes with multiple collisions, a dichotomy occurs (except in a very trivial case) between those coalescent processes that have infinitely many singletons at every time $t > 0$ almost surely and those that have no singletons at every time $t > 0$ almost surely, see~\cite{Pitman99}. It is striking that a similar dichotomy holds in the branching process setting.
\begin{theorem}\label{ThDust}
The following dichotomy holds:
\begin{itemize}
\item If $\Psi$ is the Laplace exponent of a L\'evy process with finite variation paths, then almost surely for all $t \in (0,\rT)$, the partition $\hat{\Pi}_{t}$ has singleton blocks.
\item Otherwise, almost surely for all $t\in(0,\rT)$, the partition $\hat{\Pi}_t$ has no singleton blocks.
\end{itemize}
Furthermore when $\sigma = 0$, almost surely for all $t \in (0,\rT]$ the asymptotic frequency of the dust component of $\hat{\Pi}_t$ is equal to $\prod_{s \leq t}(1-\frac{\Delta \rZ_s}{\rZ_s})$ whereas when $\sigma > 0$, almost surely for all $t \in (0,\rT]$ there is no dust.
\end{theorem}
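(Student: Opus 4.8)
The plan is to collapse the entire statement to the computation of a single quantity, the asymptotic frequency $\mathcal D_t$ of the dust of $\hat\Pi_t$, and then to evaluate it analytically. First I would record that, being exchangeable, $\hat\Pi_t$ has asymptotic frequencies almost surely and is, conditionally on them, a paint-box. I claim that $\hat\Pi_t$ has a singleton block if and only if $\mathcal D_t>0$. Indeed, conditionally on the environment each integer lands in the dust independently with probability $\mathcal D_t$, so when $\mathcal D_t>0$ a Borel--Cantelli argument yields infinitely many singletons, whereas when $\mathcal D_t=0$ the expected number of singletons among $\{1,\dots,n\}$ vanishes for every $n$ (a singleton issued from a proper block has conditional probability $0$, since every block of positive mass captures infinitely many integers), so there is no singleton at all. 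This turns the dichotomy and both quantitative claims into assertions about $\mathcal D_t$.

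Next I would identify $\mathcal D_t$ with the diffuse part of the $\Psi$-MVBP. Through the correspondence underlying Theorem~\ref{TheoremFoP}, $\hat\Pi_t$ is directed by the normalised measure $m_t/Z_t$, whose atoms give the block masses and whose continuous (drift) component gives the dust; hence $\mathcal D_t = c_t/Z_t$, where $c_t := \lim_{\lambda\to\infty} u_t(\lambda)/\lambda$ is the drift coefficient of the subordinator $x\mapsto m_t([0,x])$ and is a deterministic function of $t$. Differentiating the flow equation $\partial_t u_t = -\Psi(u_t)$ in $\lambda$ gives $\partial_\lambda u_t(\lambda) = \exp(-\int_0^t\Psi'(u_s(\lambda))\,ds)$, and letting $\lambda\uparrow\infty$ yields $c_t = \exp(-\int_0^t \Psi'(\zeta_s)\,ds)$ with $\zeta_s := u_s(\infty)$. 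Since $\Psi'$ is non-decreasing, $c_t>0$ precisely when $\Psi'(\infty-)<\infty$, that is when $\Psi$ is the Laplace exponent of a finite-variation L\'evy process (equivalently $\sigma=0$ and $\int_{(0,1)} r\,\pi(\mathrm dr)<\infty$ for the L\'evy measure $\pi$ of $\Psi$); otherwise the exponent is infinite --- either because $\zeta_s\equiv\infty$ forces $\Psi'(\zeta_s)=\Psi'(\infty-)=\infty$, or, when $\zeta_s<\infty$, because the substitution $\mathrm ds=-\mathrm d\zeta/\Psi(\zeta)$ turns it into $\int_{\zeta_t}^\infty \Psi'(\zeta)/\Psi(\zeta)\,\mathrm d\zeta=\log(\Psi(\infty)/\Psi(\zeta_t))=\infty$ --- so $c_t\equiv0$ (in particular as soon as $\sigma>0$). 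Together with the reduction this already gives the dichotomy and the absence of dust when $\sigma>0$.

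It then remains, when $\sigma=0$, to match $c_t/Z_t$ with $\prod_{s\le t}(1-\Delta Z_s/Z_s)$. In the finite-variation case $\Psi'(\infty-)=:\gamma<\infty$ and $\zeta_s\equiv\infty$ (Grey's condition fails under the linear growth of $\Psi$), so $c_t=e^{-\gamma t}$; the Lamperti time-change representation shows that between its jumps $Z$ decays at exponential rate $\gamma$, i.e. the continuous part of $\log Z_t$ equals $-\gamma t$, whence $\prod_{s\le t}(1-\Delta Z_s/Z_s)=\prod_{s\le t} Z_{s-}/Z_s=\exp(-\sum_{s\le t}\Delta\log Z_s)=e^{-\gamma t}/Z_t=c_t/Z_t=\mathcal D_t$. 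In the infinite-variation case with $\sigma=0$ both sides vanish: $c_t=0$, and the product is $0$ because $\sum_{s\le t}\Delta Z_s=\infty$ together with $\inf_{[0,t]}Z>0$ forces $\sum_{s\le t}\Delta Z_s/Z_s=\infty$; the same bound read in reverse (using $\sum_{s\le t}\Delta Z_s<\infty$ on compacts in finite variation) gives $\sum\Delta Z_s/Z_s<\infty$ and hence $\mathcal D_t>0$ for every $t<T$.

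Finally I would upgrade the almost-sure-for-fixed-$t$ identities to almost-sure-for-all-$t$ statements using the forward coalescence of the flow: $t\mapsto\hat\Pi_t$ is coarsening and c\`adl\`ag, so the presence of singleton blocks is inherited by $\hat\Pi_t$ from any finer earlier partition and their absence propagates forward; comparing with a rational time on the appropriate side then covers every $t\in(0,T)$, and $t=T$ follows from the convergence $\hat\Pi_t\to\hat\Pi_T$ of Theorem~\ref{TheoremEquivalence}. The step I expect to be the main obstacle is the second one: promoting the equality in law of Theorem~\ref{TheoremFoP} to the pathwise identity $\mathcal D_t=c_t/Z_t$, that is, showing that for the partition constructed over the given $Z$ its dust coincides, simultaneously in $t$, with the diffuse mass of the coupled measure-valued process. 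This is precisely where the construction of $\mathcal P$ and its compatibility with the lookdown and subordinator representations must be exploited, rather than merely the distributional identity.
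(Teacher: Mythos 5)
Your argument is correct in substance, but the two key computations are carried out differently from the paper, so it is worth comparing. The common core is the reduction: both you and the paper use the fact that, conditionally on $\{t<T\}$, $\hat{\Pi}_t$ is a paint-box directed by the subordinator $S_{0,t}$, so singletons exist iff the dust frequency $d_t/Z_t$ is positive, $d_t$ being the drift of $u_t$. From there the paper (i) simply cites Silverstein for the equivalence $d_t>0\Leftrightarrow$ finite variation, using the semigroup relation $d_{t+s}=d_t\,d_s$ to make the dichotomy uniform in $t$, and (ii) obtains the value $\prod_{s\le t}(1-\Delta Z_s/Z_s)$ of the dust frequency \emph{directly} from the Poissonian construction of ${\cal P}$: the first block is a singleton iff the integer $1$ is involved in no reproduction event on $(0,t]$, which conditionally on $Z$ has exactly that probability. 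You instead (i) derive $d_t=\exp(-\int_0^t\Psi'(u_s(\infty))\,ds)$ by differentiating the flow equation and analyse when the exponent is finite --- a clean, self-contained substitute for the citation which moreover yields the explicit value $d_t=e^{-\Psi'(\infty-)\,t}$ in the finite-variation case --- and (ii) prove the product formula by the detour $\mathcal{D}_t=d_t/Z_t$ combined with the pathwise Lamperti identity $\prod_{s\le t}Z_{s-}/Z_s=e^{-\Psi'(\infty-)\,t}/Z_t$. Your route buys an explicit drift formula; the paper's is shorter for the quantitative claim and never needs the Lamperti transform. For the passage from fixed $t$ to all $t$, the paper invokes the c\`adl\`ag property of the asymptotic-frequency process, while you use monotonicity of the dust under coagulation; both work.

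Three small repairs. The step you flag as the main obstacle --- upgrading the distributional identity to the pathwise one $\mathcal{D}_t=d_t/Z_t$ --- is in fact immediate: $d_t$ is deterministic, so the claim says the dust frequency is a fixed measurable function of $Z_t$, and such a statement transfers through the equality in law of the pair $(Z_t,\hat{\Pi}_t)\stackrel{(d)}{=}(S_t,\mathscr{P}(S_{0,t}))$ of Theorem \ref{TheoremFoP}. In the infinite-variation step the bound you need is $\Delta Z_s/Z_s\ge\Delta Z_s/\sup_{[0,t]}Z$, so the relevant fact is $\sup_{[0,t]}Z<\infty$ rather than $\inf_{[0,t]}Z>0$ (the latter is what the converse, finite-variation, direction uses). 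Finally, singletons propagate \emph{backwards} in time under coagulation (a singleton block of $\hat{\Pi}_q$ is a block, hence a singleton, of $\hat{\Pi}_t$ for $t\le q$) and their absence propagates forwards; your sentence states this the wrong way round, though comparing with rational times on the appropriate sides still closes the argument once corrected. Also beware that your $\gamma:=\Psi'(\infty-)$ collides with the paper's $\gamma=\sup\{u\ge0:\Psi(u)\le0\}$.
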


A flow of partitions also appears as an efficient tool to deal with convergences. We illustrate this fact with the following problem. Consider a sequence of branching mechanisms $(\Psi_m)_{m\geq 1}$ that converges pointwise to another branching mechanism $\Psi$. Implicitly, $\rZ^m$, $\hat{\Pi}^{m}$ will denote $\Psi_m$-CSBP and $\Psi_m$ flow of partitions, for every $m\geq 1$. It is easy to deduce from~\cite{CaballeroLambertBravo09} that $\rZ^m \rightarrow \rZ$ in a sense that will be made precise in Subsection \ref{SubsectionLimitTh}, so that a similar result for the corresponding genealogies is expected.
\begin{theorem}\label{TheoremLimit}
Suppose that
\begin{enumerate}[i)]
\item For all $u \in \mathbb{R}_+$, $\Psi_m(u) \rightarrow \Psi(u)$ as $m\rightarrow\infty$.
\item The branching mechanism $\Psi$ satisfies the Eve property.
\item $\Psi$ is not the Laplace exponent of a compound Poisson process.
\end{enumerate}
then
\begin{equation*}
(\hat{\Pi}^{m}_{t},t \geq 0) \stackrel{(d)}{\underset{m\rightarrow\infty}{\longrightarrow}}(\hat{\Pi}_{t},t \geq 0)
\end{equation*}
in $\mathbb{D}(\mathbb{R}_+,\mathscr{P}_{\infty})$.
\end{theorem}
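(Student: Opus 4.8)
The plan is to split the proof into the convergence of finite-dimensional distributions and a tightness estimate, both driven by the convergence of the underlying CSBPs. First I would invoke \cite{CaballeroLambertBravo09}: since $\Psi^m \to \Psi$ pointwise, the processes $Z^m$ converge in distribution to $Z$ in $\mathbb{D}(\mathbb{R}_+,[0,\infty])$, together with their jumps. By the Skorokhod representation theorem I would transfer everything to a common probability space on which $Z^m \to Z$ almost surely. The structural observation behind the argument (Theorem \ref{TheoremFoP}) is that, conditionally on the CSBP path, the flow $\hat{\Pi}^m_{0,\cdot}$ is obtained by composing, via the $Coag$ operator, the paint-box partitions attached to the jumps of $Z^m$ together with the binary events coming from the diffusion part. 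I would therefore realize all the flows $\hat{\Pi}^m$ and $\hat{\Pi}$ on this same space through a single fixed family of auxiliary variables, namely i.i.d. uniform random variables indexing the paint-box partitions and an independent Poisson process driving the binary coalescences, so that each $\hat{\Pi}^m$ becomes a measurable functional of $Z^m$ and this auxiliary randomness, and $\hat{\Pi}$ the same functional of $Z$.

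For the finite-dimensional distributions, fix continuity times $0<t_1<\dots<t_n$ and project each partition onto the first $N$ integers. On the event $\{t_n<T\}$, only finitely many jumps of $Z$ have relative size bounded below, and for the restriction to $[N]$ the aggregate effect of the infinitely many small jumps depends continuously on the path; since the relative jumps $\frac{\Delta Z^m_s}{Z^m_s}$ converge to $\frac{\Delta Z_s}{Z_s}$ and the paint-box map is continuous with respect to the ranked mass-partition, the partitions attached to the individual reproduction events converge. As $Coag$ is continuous on the restriction to any fixed $[N]$, I would conclude that $\hat{\Pi}^m_{t_i}|_{[N]} \to \hat{\Pi}_{t_i}|_{[N]}$, and hence the joint convergence of $(\hat{\Pi}^m_{t_1},\dots,\hat{\Pi}^m_{t_n})$.

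For tightness I would again work with the restriction to $[N]$, which is a c\`adl\`ag process valued in the finite set $\mathscr{P}_N$. Here I would apply Aldous's criterion: the probability that $\hat{\Pi}^m_{0,\cdot}|_{[N]}$ jumps in a small time window after a stopping time is controlled by the instantaneous coalescence rate, which is bounded in terms of $\int (\frac{x}{Z_s})^2 \pi_m(dx)$ and $\sigma_m^2/Z_s$, where $\sigma_m$ and $\pi_m$ are the diffusion coefficient and the L\'evy measure of $\Psi^m$; the pointwise convergence $\Psi^m\to\Psi$ furnishes uniform-in-$m$ bounds on these rates on compacts contained in $[0,T)$. Because the jumps of the nested restrictions occur at the same instants, a single time-change serves all $N$, so tightness of each projection upgrades to tightness in $\mathbb{D}(\mathbb{R}_+,\mathscr{P}_\infty)$. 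The behaviour at and beyond the lifetime is then governed by the Eve property: by Theorem \ref{TheoremEquivalence}, $\hat{\Pi}_T=1_{[\infty]}$ almost surely, and the convention $\hat{\Pi}_t:=\hat{\Pi}_T$ absorbs the flow at $1_{[\infty]}$; I would show that $\hat{\Pi}^m_t$ likewise concentrates near $1_{[\infty]}$ as $t$ approaches $T^m$ and that $T^m\to T$, so that convergence extends to all of $\mathbb{R}_+$. The hypothesis that $\Psi$ is not the Laplace exponent of a compound Poisson process ensures that the limiting flow has no fixed times of discontinuity, which is precisely what lets finite-dimensional convergence at continuity points together with tightness yield convergence in $\mathbb{D}(\mathbb{R}_+,\mathscr{P}_\infty)$.

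The hardest part will be the uniform control near the lifetime $T$: reconciling the Skorokhod time-deformations inherent in $Z^m\to Z$ with the accumulation of infinitely many small reproduction events, and showing that this accumulation, together with the Eve property, forces $\hat{\Pi}^m$ up to $1_{[\infty]}$ in a manner uniform enough to guarantee tightness across the absorption time.
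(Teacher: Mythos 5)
Your overall skeleton---reduce to the restriction to $[n]$, use the convergence $Z^m\to Z$ from \cite{CaballeroLambertBravo09} after a Skorokhod representation, stop the processes before the lifetime, and use the Eve property (via Theorem \ref{TheoremEquivalence}) to show that $\hat{\Pi}^{[n]}$ is absorbed at $1_{[n]}$ before $T$ so that the stopped convergence suffices---matches the paper's strategy. But the central mechanism you propose for the convergence of the stopped flows has a genuine gap. You want to realize every $\hat{\Pi}^m$ as the \emph{same} measurable functional of $Z^m$ and a single fixed family of auxiliary variables (uniforms for the paint-boxes, one Poisson process for the binary events), and then argue by continuity of $Coag$ and of the paint-box map. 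This coupling cannot be made to work as stated. First, the jumps of $Z^m$ and of $Z$ are not in canonical correspondence: under Skorokhod convergence only the large jumps match up after a time-deformation, while the infinitely many small jumps appear and disappear, and the functional $t\mapsto\sum_{s\le t}(\Delta Z_s/Z_s)^2$ governing how often they hit $[N]$ is not a continuous functional of the path. Second, and more fatally, Assumption \ref{AssumptionPsi} only gives weak convergence of $\sigma_m^2\delta_0(dh)+(1\wedge h^2)\nu_m(dh)$ to $\sigma^2\delta_0(dh)+(1\wedge h^2)\nu(dh)$, so mass can migrate between the jump part and the diffusion part: one can have $\sigma_m=0$ for every $m$ while $\sigma>0$. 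In that regime the binary coalescences of the limit arise from pairs of individuals falling into ever smaller jumps of $Z^m$, and no fixed ``independent Poisson process driving the binary coalescences'' shared across all $m$ can produce this convergence almost surely. Your tightness step inherits the same problem, since the ``instantaneous coalescence rate'' you want to bound mixes the two parts of $\Psi^m$ in a way that your coupling keeps separate.

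The paper circumvents exactly this by abandoning any pathwise coupling of the reproduction events and working instead with the martingale problem: the restricted flow $\hat{\Pi}^{[n]}$ is equivalent to the $d_n$ counting processes $L_t(n,K)$, whose predictable compensators are $\int_0^t\lambda_{n,\#K}(Z_{s-},\Psi)\,ds$ (Proposition \ref{PropCompensator}), where $\lambda_{n,k}(z,\Psi)=\int_0^1 x^k(1-x)^{n-k}\bigl(\frac{\sigma^2}{z}x^{-2}\delta_0(dx)+z\,\nu\circ\phi_z^{-1}(dx)\bigr)$ treats the diffusion and jump contributions as a single finite measure and is jointly continuous in $(z,\Psi)$ (Proposition \ref{PropContinuityLambda}). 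Combined with the a.s. convergence of the stopped CSBPs (Lemma \ref{LemmaCVCSBPkilled}), the compensators converge, Theorem VI.4.18 of \cite{JacodShiryaev} yields tightness, and Theorem II.1.8 identifies the limit through the uniqueness of the compensator. If you want to salvage your argument you should replace the coupling by this compensator computation. A final, smaller point: hypothesis (iii) is not used to rule out fixed discontinuities of the limiting flow; it is used in Lemma \ref{LemmaCVCSBPkilled} to guarantee that $T^m(\epsilon)\to T(\epsilon)$ and that the stopped paths converge (a compound Poisson $\Psi$ would make the exit time of $(\epsilon,1/\epsilon)$ a discontinuous functional of the path).
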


The Eve property says that the rescaled $\Psi$-MVBP can be approximated by a Dirac mass as $t$ gets close to $\rT$. It is natural to ask if finer results can be obtained: for instance, does there exist a second Eve that carries a significant part of the remaining population ?\\
We call \textit{ancestor} a point $x \in [0,1]$ for which there exists $t \in [0,\rT)$ such that $\rmm_t(\{x\}) > 0$; in that case, $\rmm_t(\{x\})$ is called the \textit{progeny} of $x$ at time $t$. We will prove in Subsection \ref{SubsectionAncestors} that the collection of ancestors is countable. Roughly speaking, the progeny of a given ancestor is a $\Psi$-CSBP started from $0$. Therefore, one can naturally compare two ancestors: either by persistence, i.e. according to the extinction times of their progenies (if they become extinct in finite time); or by predominance, i.e. according to the asymptotic behaviours of their progenies (if their lifetimes are infinite). Notice that these two notions (persistence/predominance) are mutually exclusive.
\begin{theorem}\label{ThEves}
Assume that $\rZ$ does not reach $\infty$ in finite time. If the Eve property holds then one can order the ancestors by persistence/predominance. We denote this ordering $(\rme^{i})_{i\geq 1}$ and call these points the Eves. In particular, $\rme^{1}$ is the primitive Eve.
\end{theorem}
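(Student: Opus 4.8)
The plan is to turn the heuristic comparison of ancestors into a genuine strict total order that can be enumerated as a decreasing sequence (order type $\omega^{*}$), and to identify its maximal element with the primitive Eve. Throughout I write $\zeta^{x}_{t}:=m_{t}(\{x\})$ for the progeny of an ancestor $x$, and I use that the collection of ancestors is countable (Subsection \ref{SubsectionAncestors}) and that, by the assumption that $Z$ does not reach $\infty$ in finite time, each $\zeta^{x}$ is a $\Psi$-CSBP started from $0$ which either hits $0$ at a finite extinction time $\tau^{x}$ or stays in $(0,\infty)$ for all times; this is exactly the persistence/predominance dichotomy.

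The analytic heart is a \emph{relative Eve property}: for two ancestors $x\neq y$ whose progenies both have infinite lifetime, $\zeta^{x}_{t}-\zeta^{y}_{t}$ keeps a constant sign for all $t$ close to $T$, equivalently $\zeta^{x}_{t}/(\zeta^{x}_{t}+\zeta^{y}_{t})$ converges to a limit in $\{0,1\}$. I would obtain this from the Eve property itself: by the branching property the pair $(\zeta^{x},\zeta^{y})$ is (a time shift of) the total masses of the two sub-populations of a $\Psi$-MVBP started from two atoms, and the Eve property transported to this initial condition (via the branching/scaling structure used to show that $\mathbf{e}$ is uniform) forces $m_{t}(\{x\})/Z_{t}\to\mathbf{1}_{\{\mathbf{e}=x\}}$, so one family asymptotically swallows the other. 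Together with the a.s. distinctness of the extinction times of two independent progenies (their laws having no atoms) and the fact that an infinite-lifetime progeny eventually dominates an extinct one, this makes any two ancestors comparable; declaring $x\succ y$ when $\zeta^{x}_{t}>\zeta^{y}_{t}$ on a left-neighbourhood of the last time either family is alive yields an antisymmetric, total relation, and I would check transitivity by comparing the three progenies on a common such horizon.

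It remains to show that $\succ$ is of type $\omega^{*}$, i.e. that the ancestors can be listed $\mathbf{e}^{1}\succ\mathbf{e}^{2}\succ\cdots$; this is where the main difficulty lies. The maximal element exists and equals the primitive Eve, since the Eve property gives $\zeta^{\mathbf{e}}_{t}/Z_{t}\to1$, so $\mathbf{e}$ eventually dominates every other family and is the $\succ$-greatest ancestor, which I set to be $\mathbf{e}^{1}$. The delicate point is that every ancestor must be dominated by only finitely many others, so that the extraction \emph{take the $\succ$-greatest, remove it, repeat} reaches each ancestor in finitely many steps. I would control this through the finiteness of large families---at each fixed time only finitely many progenies exceed any threshold, since $\sum_{x}\zeta^{x}_{t}\le Z_{t}<\infty$, equivalently the ranked asymptotic frequencies of $\hat{\Pi}_{t}$ form a summable decreasing sequence---combined with a recursive use of the relative Eve property: after removing the current top family, the residual population should again be, by the branching property, a $\Psi$-MVBP satisfying the Eve property, whose dominant family is $\mathbf{e}^{2}$, and so on.

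The main obstacle I anticipate is precisely this last step: proving that the per-time ranking of the families by progeny size stabilises as $t\uparrow T$, so that each family acquires a finite limiting rank and no ancestor is dominated by infinitely many others (equivalently, that the Eves do not accumulate). Making the recursive removal of the current Eve rigorous---so that the residual population is genuinely an independent $\Psi$-MVBP to which Theorem \ref{TheoremEquivalence} applies, despite the Eve being defined only through a limit---will require a careful conditioning/filtration argument, and is the crux on which the enumeration $(\mathbf{e}^{i})_{i\ge1}$ rests.
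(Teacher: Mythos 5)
Your overall strategy (a total order on ancestors by asymptotic comparison of progenies, with the primitive Eve as maximal element, enumerated as a decreasing sequence) matches the paper's in spirit, and your treatment of the extinction case --- distinct extinction times, only finitely many families alive at any fixed time --- is essentially the paper's argument. But in the infinite-lifetime case there is a genuine gap, and it sits exactly where you yourself locate ``the crux'': you never establish that each ancestor is dominated by only finitely many others, i.e.\ that the order has type $\omega^{*}$, nor do you actually construct $\mathbf{e}^{2},\mathbf{e}^{3},\dots$. Worse, the mechanism you propose for it --- ``remove the current top family; the residual population is again, by the branching property, a $\Psi$-MVBP satisfying the Eve property'' --- does not work as stated. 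The branching property splits the population according to a partition of the \emph{initial} measure into pieces that are fixed (or at least measurable with respect to the initial data); the set $[0,1]\setminus\{\mathbf{e}^{1}\}$ is a random set defined only through the limit $t\uparrow T$, so the restriction of $m$ to it is not an independent $\Psi$-MVBP, and Theorem \ref{TheoremEquivalence} cannot be applied to it directly. A similar issue affects your ``relative Eve property'' for two atoms: the Eve property is stated for the Lebesgue initial condition, and transporting it to a two-atom initial measure is itself something to be proved.

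The paper resolves both difficulties by a discretization that keeps everything inside the scope of the branching property for deterministic intervals (Lemma \ref{LemmaRecursiveEves}): subdivide $[0,1]$ into the $n$ intervals $[\tfrac{i-1}{n},\tfrac{i}{n})$, whose restricted processes are genuinely independent $\Psi$-MVBPs each with its own Eve $e(i,n)$; order these $n$ interval-Eves by predominance (comparability of two of them follows from the Eve property applied to the union of the two intervals, again a Lebesgue-started MVBP); and then show that the $k$-th interval-Eve $e^{k}(n)$ is eventually constant in $n$. The stabilization of $e^{2}(n)$ is the delicate point, and it is handled by an exchangeability estimate: if $e^{2}(n)$ kept changing, the second Eve would have to hide in the same interval as $\mathbf{e}^{1}$ infinitely often, forcing $|\mathbf{e}^{1}-e^{2}(n)|\to 0$, whereas by exchangeability of the increments $\mathbb{P}(i_{1}^{p},i_{2}^{p}\text{ consecutive})=2/p\to 0$. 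This discretize-and-stabilize argument is the missing idea in your proposal; without it (or a rigorous substitute for your recursive removal) the enumeration $(\mathbf{e}^{i})_{i\geq 1}$ is not established.
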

The Eves enjoy several nice properties. For instance, Proposition \ref{PropositionLawAncestors} shows that the sequence $(\rme^{i})_{i\geq 1}$ is i.i.d. uniform$[0,1]$. Also, the Eves will be of major importance in the last part of this work we now present.\\

Theorem \ref{TheoremFoP} shows that flows of subordinators and flows of partitions are related by their finite-dimensional marginals. One could wonder if the connection is deeper: does there exist a flow of partitions embedded into a flow of subordinators ? It turns out that the Eve property plays a crucial r\^ole in this topic.\vspace{2pt}

Consider a $\Psi$ flow of subordinators $(\rS_{s,t}(a),0 \leq s \leq t < \rT^\rS, 0 \leq a \leq \rS_s)$, and for simplicity let $\rZ_s := \rS_s$ denote the total population size and $\rT:=\rT^\rS$ its lifetime. For all $ s \leq t$, the subordinator $\rS_{s,t}$ defines a random measure $\rmm_{s,t}$ on $[0,\rZ_s]$ with total mass $\rZ_t$. Assume that $\rZ$ does not reach $\infty$ in finite time and that the Eve property is verified. Theorem \ref{ThEves} allows to introduce the \textit{Eves process} $(\rme^{i}_s,s\in [0,\rT))_{i\geq 1}$ by considering at each time $s \in[0,\rT)$, the sequence of Eves of the $\Psi$-MVBP $(\rmm_{s,t},t\in[s,\rT))$ that starts from the Lebesgue measure on $[0,\rZ_s]$. Notice that we actually rescale the Eves $(\rme^{i}_s)_{i\geq 1}$ by the mass $\rZ_s$ in order to obtain r.v. in $[0,1]$.\\
The Eves process is the set of individuals that play a significant r\^ole in the population as time passes. One is naturally interested in the genealogical relationships between these Eves, so we introduce a collection of partitions $(\hat{\Pi}_{s,t},0\leq s \leq t < \rT)$ by setting
\begin{equation*}
\hat{\Pi}_{s,t}(i) := \{j\in\mathbb{N}:\rme^j_t\mbox { descends from }\rme^i_s\}
\end{equation*}
Here "$\rme^j_t$ descends from $\rme^i_s$" means that $\rZ_t\cdot\rme^j_t\in\big(\rS_{s,t}(\rZ_s\cdot\rme^i_s-),\rS_{s,t}(\rZ_s\cdot\rme^i_s)\big]$.
\begin{theorem}\label{TheoremFoP2}
The collection of partitions $(\hat{\Pi}_{s,t},0 \leq s \leq t < \rT)$ defined from the flow of subordinators and the Eves process is a $\Psi$ flow of partitions.
\end{theorem}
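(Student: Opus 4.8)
The plan is to verify that the collection $(\hat{\Pi}_{s,t},0\le s\le t<T)$ built from the Eves process satisfies the two properties established in Theorem \ref{TheoremFoP}, which together with the underlying CSBP $Z=S$ determine the law of a $\Psi$ flow of partitions: the cocycle identity and the finite-dimensional marginals given by the paint-boxes $\mathscr{P}(S_{t_{k-1},t_k})$. Two facts will be used throughout. By Theorem \ref{ThEves} the Eves at a given time are exactly the ancestors of the corresponding $\Psi$-MVBP, so every point carrying mass at that time is some $\mathbf{e}^i$; and by Proposition \ref{PropositionLawAncestors} the sequence $(\mathbf{e}^i_t)_{i\ge1}$ is i.i.d. uniform$[0,1]$, so that $(Z_t\,\mathbf{e}^i_t)_{i\ge1}$ is an i.i.d. uniform sampling of the population $[0,Z_t]$.

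First I would prove the cocycle property $\hat{\Pi}_{r,t}=Coag(\hat{\Pi}_{s,t},\hat{\Pi}_{r,s})$. Unfolding the definitions, this is the transitivity of the descent relation: $\mathbf{e}^k_t$ descends from $\mathbf{e}^i_r$ if and only if there is a (necessarily unique) $j$ with $\mathbf{e}^k_t$ descending from $\mathbf{e}^j_s$ and $\mathbf{e}^j_s$ from $\mathbf{e}^i_r$. The right-to-left inclusion follows from the cocycle $S_{r,t}=S_{s,t}\circ S_{r,s}$ of the flow of subordinators and the monotonicity of $S_{s,t}$, taking care of the left limits at the endpoints of the progeny intervals. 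For the reverse inclusion, if $\mathbf{e}^k_t$ descends from $\mathbf{e}^i_r$ and is not a singleton of $\hat{\Pi}_{s,t}$, its time-$s$ ancestor $b=\inf\{a:S_{s,t}(a)\ge Z_t\mathbf{e}^k_t\}$ carries positive progeny at time $t$, hence is an ancestor of the MVBP started at $s$ and therefore equals $Z_s\mathbf{e}^j_s$ for some $j$ by Theorem \ref{ThEves}; uniqueness of $j$ comes from the disjointness of the progeny intervals. The only delicate point is the dust: a $t$-Eve falling in the continuous part of $S_{s,t}$ is a singleton of $\hat{\Pi}_{s,t}$, and one checks using the composition of subordinators that it is then also a singleton of $\hat{\Pi}_{r,t}$, so that $Coag$ treats it consistently.

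Next I would identify the one-step law. Fix $s\le t$. The progeny intervals $(S_{s,t}(Z_s\mathbf{e}^i_s-),S_{s,t}(Z_s\mathbf{e}^i_s)]$ of positive length are exactly the jump intervals of $S_{s,t}$, and two integers $j,j'$ lie in the same block of $\hat{\Pi}_{s,t}$ precisely when the sampling points $Z_t\mathbf{e}^j_t,Z_t\mathbf{e}^{j'}_t$ fall in the same such interval. The key observation is that, by the independence of the increments of the flow of subordinators, the family $(\mathbf{e}^i_t)_{i\ge1}$ — being measurable with respect to the flow after time $t$ — is, conditionally on $Z_t$, an i.i.d. uniform sampling of $[0,Z_t]$ independent of the flow up to $t$, in particular of $S_{s,t}$. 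Hence, conditionally on the jump structure of $S_{s,t}$, the partition $\hat{\Pi}_{s,t}$ is obtained by throwing i.i.d. uniform points into the jump intervals, i.e.\ it is distributed as $\mathscr{P}(S_{s,t})$, the dust corresponding to points landing in the continuous part of $S_{s,t}$.

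The main obstacle is to upgrade this to the full finite-dimensional law, namely that for $0<t_1<\dots<t_n$ the increments $\hat{\Pi}_{t_{k-1},t_k}$ are, conditionally on the total-mass process, independent with the above marginals, matching the right-hand side of Theorem \ref{TheoremFoP}. Conditionally on $(Z_t)$ the jump structures of the increments $S_{t_{k-1},t_k}$ are independent by the branching (Markov) property of the flow. The difficulty is that the Eves at an intermediate time $t_k$ serve both as the sampling points of $\hat{\Pi}_{t_{k-1},t_k}$ and as the ancestors indexing $\hat{\Pi}_{t_k,t_{k+1}}$, and their positions coincide with jump locations of $S_{t_k,t_{k+1}}$, a priori coupling the two increments. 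The resolution I would pursue rests on the spatial homogeneity of the subordinators: although an Eve position is a jump location of the next increment, the law of that jump — hence the progeny structure building $\hat{\Pi}_{t_k,t_{k+1}}$ — does not depend on the value of the position, whereas the colouring producing $\hat{\Pi}_{t_{k-1},t_k}$ depends only on the positions relative to the past intervals of $S_{t_{k-1},t_k}$, which are independent of the future. Formalising this decoupling — that, given the sizes, the past colouring of the uniform Eve positions is independent of the future genealogical structure — is the technical heart of the proof; once it is in place, the increments are conditionally independent with the paint-box marginals, and Theorem \ref{TheoremFoP} identifies $(\hat{\Pi}_{s,t})$ as a $\Psi$ flow of partitions.
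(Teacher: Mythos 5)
Your overall strategy coincides with the paper's: prove the cocycle identity from the composition $S_{r,t}=S_{s,t}\circ S_{r,s}$, identify the one-step marginal as the paint-box $\mathscr{P}(S_{s,t})$ using the fact that the time-$t$ Eves are i.i.d.\ uniform and independent of the flow up to time $t$, and then pass to the finite-dimensional marginals. The first two steps are essentially sound; the paper packages the cocycle argument and the needed identity $F_{s,t}^{-1}(\mathbf{e}^i_t)=\mathbf{e}^{k_i}_s$ into Proposition \ref{PropCompositionRule}, and the independence of the time-$t$ Eves from the past of the flow into Lemma \ref{LemmaIndepEves}.

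However, the step you yourself flag as ``the technical heart'' --- the decoupling of consecutive increments through the intermediate Eves --- is precisely where your proposal stops, and the heuristic you offer for it (spatial homogeneity of the subordinator, ``the law of that jump does not depend on the value of the position'') does not by itself close it. The difficulty is that $(\mathbf{e}^i_{t_k})_{i\geq 1}$ is not an arbitrary i.i.d.\ sample: it is the sequence of jump locations of the \emph{future} flow, selected and \emph{ordered} by the persistence/predominance of their progenies, so one must control the joint law of (positions, future genealogy), not merely the marginal law of a single jump at a fixed location. The paper closes this by an induction on $n$: it writes the conditional expectation of $f_1(\hat{\Pi}_{0,t_1})g_1(Z_{t_1})\cdots f_n(\hat{\Pi}_{t_{n-1},t_n})g_n(Z_{t_n})$ given $(\mathbf{e}^i_0)_{i\geq 1}$, conditions at time $t_1$ on ${\cal F}_{t_1}$ and $(\mathbf{e}^i_{t_1})_{i\geq 1}$, applies the Markov property of $Z$, and invokes the induction hypothesis in the form ``the conditional law of the future genealogy given the time-$t_1$ Eves does not depend on them''. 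The two independence inputs that make this induction run are exactly Proposition \ref{PropCompositionRule} (the partition $\hat{\Pi}_{s,t}$ is independent of $(\mathbf{e}^i_s)_{i\geq 1}$) and Lemma \ref{LemmaIndepEves} (the sequence $(\mathbf{e}^i_t)_{i\geq 1}$ is independent of $\sigma\{m_{u,v},0\leq u\leq v\leq t\}$), both of which ultimately rest on the lookdown representation via Propositions \ref{PropUnicityInitialTypes} and \ref{PropositionLawAncestors}. To complete your proof you would need to formulate and prove such an inductive decoupling statement explicitly; as written, the conditional independence of the increments is asserted rather than established.
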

We end with a decomposition result similar to the main theorem of~\cite{Labbe11}. For each time $s\in[0,\rT)$, let $\mathscr{E}_s(\hat{\Pi},(\rme^{i}_{s})_{i\geq 1})$ be the measure-valued process defined by
\begin{equation*}
[s,\rT)\ni t\mapsto\sum_{i\geq 1}|\hat{\Pi}_{s,t}(i)|\delta_{\rme^i_s}(dx) + \Big(1-\sum_{i\geq 1}|\hat{\Pi}_{s,t}(i)|\Big)dx
\end{equation*}
and $\rmr_{s,t}$ the probability measure on $[0,1]$ defined by
\begin{equation*}
\rmr_{s,t}(dx):= \frac{\rmm_{s,t}(\rZ_s\cdot dx)}{\rZ_t}
\end{equation*}
\vspace{-10pt}
\begin{theorem}\label{ThLookdown}
The flow of subordinators can be uniquely decomposed into two random objects: the Eves process $(\rme^{i}_s,s\in [0,\rT))$ and the flow of partitions $(\hat{\Pi}_{s,t},0\leq s \leq t < \rT)$.
\begin{enumerate}[i)]
\item \textbf{Decomposition}. For each $s \in \mathbb{R}$, a.s. $\mathscr{E}_s(\hat{\Pi},(\rme^{i}_{s})_{i\geq 1}) = (\rmr_{s,t},t \in [s,\rT))$
\item \textbf{Uniqueness}. Let $(\rH_{s,t},0 \leq s \leq t < \rT)$ be a $\Psi$ flow of partitions defined from the $\Psi$-CSBP $\rZ$, and for each $s \in [0,\rT)$, consider a sequence $(\upchi_s(i))_{i\geq 1}$ of r.v. taking distinct values in $[0,1]$. If for each $s \in [0,\rT)$, a.s. $\mathscr{E}_{s}(\rH,(\upchi_s(i))_{i\geq 1}) = (\rmr_{s,t},t \in [s,\rT))$ then\begin{itemize}
\item For each $s \in [0,\rT)$, a.s. $(\upchi_s(i))_{i\geq 1} = (\rme^{i}_s)_{i\geq 1}$.
\item Almost surely $\rH = \hat{\Pi}$.
\end{itemize}
\end{enumerate}
\end{theorem}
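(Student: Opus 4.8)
The plan is to prove the two assertions separately: the decomposition (i) by a direct analysis of the atomic and continuous parts of the rescaled $\Psi$-MVBP, and the uniqueness (ii) by a procedure that reconstructs first the Eves and then the entire genealogy from the process $r_{s,\cdot}$ alone.

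For the decomposition, I would fix $s \in [0,T)$ and split the measure $m_{s,t}$, whose distribution function is the subordinator $S_{s,t}$, into its purely atomic part, carried by the jumps of $S_{s,t}$, and its continuous (drift) part. By Theorem \ref{ThEves} the jump locations are precisely the ancestors, i.e. the points $Z_s\mathbf{e}^i_s$, so after rescaling by $Z_s$ the atomic part of $r_{s,t}$ sits at the Eves $\mathbf{e}^i_s$ with masses $m_{s,t}(\{Z_s\mathbf{e}^i_s\})/Z_t$. The key identity to establish is that this rescaled progeny equals the asymptotic frequency $|\hat\Pi_{s,t}(i)|$: since $\hat\Pi$ is a $\Psi$ flow of partitions (Theorem \ref{TheoremFoP2}) whose block $i$ collects exactly those time-$t$ Eves $\mathbf{e}^j_t$ falling in the progeny interval $(S_{s,t}(Z_s\mathbf{e}^i_s-),S_{s,t}(Z_s\mathbf{e}^i_s)]$, and since the Eves are i.i.d. uniform (Proposition \ref{PropositionLawAncestors}), a law-of-large-numbers argument (equivalently, the paint-box marginals of Theorem \ref{TheoremFoP}) yields $|\hat\Pi_{s,t}(i)| = r_{s,t}(\{\mathbf{e}^i_s\})$ almost surely. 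The complementary mass $1-\sum_i|\hat\Pi_{s,t}(i)|$ then matches the rescaled drift part, consistently with the dust analysis of Theorem \ref{ThDust}. This proves $\mathscr{E}_s(\hat\Pi,(\mathbf{e}^i_s))(t)=r_{s,t}$ for each fixed $t$; I would upgrade to the simultaneous-in-$t$ statement by verifying the identity on a countable dense set of times and invoking the càdlàg regularity of both sides.

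For the uniqueness, suppose $\mathscr{E}_s(H,(\chi_s(i)))=r_{s,\cdot}$ for every $s$. Since $\mathscr{E}_s$ only exposes the block frequencies $|H_{s,t}(i)|$ and the locations $\chi_s(i)$, I first recover these. Comparing with part (i), the atoms of $r_{s,t}$ for varying $t$ force $\{\chi_s(i)\}$ to coincide, as a set, with the Eves $\{\mathbf{e}^i_s\}$, each carrying the progeny process $t\mapsto|H_{s,t}(i)|Z_t$. The indexing is then pinned down by the persistence/predominance order of Theorem \ref{ThEves}, which is a deterministic functional of the progeny processes and hence of $r_{s,\cdot}$: identifying level $i$ of $H$ (resp. of $\hat\Pi$) with the $i$-th element of this order gives $(\chi_s(i))_i=(\mathbf{e}^i_s)_i$ and $|H_{s,t}(i)|=|\hat\Pi_{s,t}(i)|$ for all $s,t,i$. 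A point to check here is that the block-labelling convention of a flow of partitions is compatible with this order, which I would verify using that persistence/predominance is hereditary along the flow, so that the most predominant time-$t$ Eve descends from the most predominant time-$s$ Eve, and so on.

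It remains to recover the fine block structure, and this is where the main obstacle lies, since $\mathscr{E}_s$ never sees which integers lie in which block. Here I would exploit the cocycle property: for $s<t<u$ one has $|H_{s,u}(i)|Z_u=\sum_{j\in H_{s,t}(i)}|H_{t,u}(j)|Z_u$, so the progeny of Eve $i$ at time $s$ decomposes, for all later $u$, into the progenies of the time-$t$ Eves it contains. Because these individual progeny processes are distinct CSBPs, distinguishable through their jump times and sizes—which are almost surely pairwise disjoint across distinct Eves—this decomposition is unique, and the only subset of time-$t$ Eves whose progenies sum to that of Eve $i$ is exactly the descent set $\{j:\mathbf{e}^j_t\text{ descends from }\mathbf{e}^i_s\}=\hat\Pi_{s,t}(i)$. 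Hence $H_{s,t}=\hat\Pi_{s,t}$ almost surely. The delicate part throughout will be to run these recovery arguments simultaneously over all times rather than for fixed $s,t,u$, while controlling the relevant null sets; I would handle this by arguing first along rational times and then extending by the regularity of the flow.
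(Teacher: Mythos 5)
Your proposal follows the paper's strategy quite closely. For the decomposition, the paper does exactly what you describe: it writes $\Xi_{s,t}$ in the form $\sum_i|\hat{\Pi}_{s,t}(i)|\delta_{\mathbf{e}^i_s}+(1-\sum_i|\hat{\Pi}_{s,t}(i)|)dx$, obtains the key identity $|\hat{\Pi}_{s,t}(i)|=r_{s,t}(\{\mathbf{e}^i_s\})$ from the composition rule (Proposition \ref{PropCompositionRule}, which is the rigorous form of your law-of-large-numbers/paint-box argument, including the non-obvious fact that the least-element ordering of the blocks matches the persistence/predominance ordering of the Eves), and then passes from rational $t$ to all $t$ by c\`adl\`ag regularity. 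The first bullet of the uniqueness is Proposition \ref{PropUnicityInitialTypes}, which is your argument. For the reconstruction of $H$ the paper also reduces everything to the frequency processes $|H_{s,t}(i)|$ along rational times, but it then takes left limits in $s$ (Lemma \ref{LemmaUniqueness1}) so as to isolate the \emph{elementary} reproduction events $\hat{\Pi}_{s-,s}$, which have a single non-singleton block; identifying a single merger from the frequency processes (Lemma \ref{LemmaUniqueness2}) only requires that the processes $(|\hat{\Pi}_{s,t}(i)|)_{i}$ be pairwise distinct, since every non-merged frequency process must coincide with exactly one post-merger process. Your route instead identifies the full partition $H_{s,t}$ for each pair $s<t$ at once, which requires the stronger statement that the grouping of the progeny processes into blocks with prescribed sums is unique.

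That stronger statement is where your justification is off. Pairwise disjointness of jump times does not cover the case where the progenies are continuous (e.g.\ $\nu=0$, pure Feller-diffusion component, which is the \textbf{[Extinction]} case with no jumps at all), and pairwise distinctness of the summands does not by itself rule out an identity of the form $\sum_{j\in A}f_j=\sum_{j\in B}f_j$ for two different groups $A\neq B$. The correct argument — and the one the paper uses for its single-merger version — is the persistence/predominance dichotomy you already invoked earlier: if $A\neq B$ then $\sum_{A\setminus B}f_j=\sum_{B\setminus A}f_j$ would force either two distinct Eves to become extinct at the same time, or two distinct Eves to have comparable asymptotic progenies, both of which are ruled out almost surely (Lemma \ref{LemmaDistributionAbsContinuous} and the classification of behaviours). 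With that substitution your direct reconstruction goes through, so the gap is reparable; but as written, the ``disjoint jump times'' step would fail for a class of branching mechanisms the theorem covers.
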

This theorem provides an embedding of the lookdown representation into a flow of subordinators and thus, unifies those two representations. Note that the Eve property is actually a necessary condition for the uniqueness. Indeed when the Eve property does not hold, there is no natural order on the ancestors and therefore no uniqueness of the embedding.

\section{Preliminaries}\label{SectionPreliminaries}
\subsection{Partitions of integers}\label{SubsectionPartitions}
For every $n\in\mathbb{N}\cup\{\infty\}$, let $\mathscr{P}_{n}$ be the set of partitions of $[n]:=\{1,\ldots,n\}$. We equip $\mathscr{P}_{\infty}$ with the distance $d_{\mathscr{P}}$ defined as follows. For all $\pi,\pi' \in \mathscr{P}_{\infty}$
\begin{equation}\label{EqMetricPartition}
d_{\mathscr{P}}(\pi,\pi') = 2^{-i} \Leftrightarrow i=\sup\{j \in \mathbb{N}:\pi^{[j]}=\pi'^{[j]}\}
\end{equation}
where $\pi^{[j]}$ is the restriction of $\pi$ to $[j]$. $(\mathscr{P}_{\infty},d_{\mathscr{P}})$ is a compact metric space. We also introduce for every $n\in\mathbb{N}\cup\{\infty\}$, $\mathscr{P}_{n}^*$ as the subset of $\mathscr{P}_{n}$ whose elements have a unique non-singleton block. In particular, for all subsets $\scK \subset \mathbb{N}$, we denote by $\tun_{\scK}$ the element of $\mathscr{P}_{\infty}^*$ whose unique non-singleton block is $\scK$. Also we denote by $\tO_{[\infty]}:=\{\{1\},\{2\},\ldots\}$ the trivial partition of $\bbN$ into singletons.\\
Let $\pi \in \mathscr{P}_{\infty}$, for each $i\geq 1$ we denote by $\pi(i)$ the $i$-th block of $\pi$ in the increasing order of their least element. Furthermore, the asymptotic frequency of $\pi(i)$ when it exists is defined to be
\begin{equation*}
|\pi(i)| = \lim\limits_{n\rightarrow\infty}\frac{1}{n}\sum_{j=1}^{n}\mathbf{1}_{\{j\in\pi(i)\}}
\end{equation*}
When all the blocks of a partition $\pi$ admit an asymptotic frequency, we denote by $|\pi|^{\downarrow}$ the sequence of its asymptotic frequencies in the decreasing order. We consider the Borel $\sigma$-field of $(\mathscr{P}_{\infty},d_{\mathscr{P}})$, and define an exchangeable random partition $\pi$ as a random variable on $\mathscr{P}_{\infty}$ whose distribution is invariant under the action of any permutation of $\mathbb{N}$, see Section 2.3.2 in~\cite{BertoinRandomFragmentation} for further details.\\
We define the coagulation operator $\Coag:\mathscr{P}_{\infty}\times\mathscr{P}_{\infty}\rightarrow \mathscr{P}_{\infty}$ as follows. For any elements $\pi,\pi' \in \mathscr{P}_{\infty}$, $\Coag(\pi,\pi')$ is the partition whose blocks are given by
\begin{equation}\label{EqCoag}
\Coag(\pi,\pi')(i) = \underset{j \in \pi'(i)}{\bigcup} \pi(j)
\end{equation}
for every $i \in \mathbb{N}$. This is a Lipschitz-continuous operator and we have
\begin{equation}\label{EqCoagAssociativity}
\Coag\big(\pi,\Coag(\pi',\pi'')\big) = \Coag\big(\Coag(\pi,\pi'),\pi''\big)
\end{equation}
for any elements $\pi,\pi',\pi'' \in \mathscr{P}_{\infty}$, see Section 4.2 in~\cite{BertoinRandomFragmentation} for further details.\\
We call mass-partition a sequence $s=(s_i)_{i\geq 1}$ such that $s_1 \geq s_2 \geq \ldots \geq 0, \sum_{i\geq 1}s_i\leq 1$. From a mass-partition $s$ one can define the paint-box based on $s$, that is, the distribution $\mathscr{P}(s)$ of the random exchangeable partition whose sequence of asymptotic frequencies is $s$. This can be achieved by considering a sequence $(U_i)_{i\geq 1}$ i.i.d. uniform$[0,1]$ and defining the random partition $\pi$ via the following equivalence relation
\begin{equation*}
i \sim j \Leftrightarrow \exists\, p \geq 1\mbox{ s.t. } U_i,U_j \in \left[\sum_{k=1}^{p-1}s_k,\sum_{k=1}^{p}s_k\right)
\end{equation*}
In this work, we will consider the mass-partition $(x,0,\ldots)$ associated to a point $x\in(0,1]$ and the corresponding paint-box distribution $\mathscr{P}(x,0,\ldots)$ in order to define the flow of partitions, see Subsection \ref{SubsectionStoFlow}.\\
Finally consider a subordinator $X$ restricted to $[0,a]$, with $a > 0$. On the event $\{X_a > 0\}$, the sequence $(\frac{\Delta X_t}{X_a})^{\downarrow}_{t \in[0,a]}$ will be called the mass-partition induced by the subordinator $X$, and the paint-box based on this sequence will be denoted by $\mathscr{P}(X)$. This can be achieved by considering an i.i.d. sequence $(U_{i})_{i\geq 1}$ of uniform$[0,1]$ r.v., and defining on the event $\{X_a > 0\}$ the exchangeable random partition $\pi$ by the following equivalence relation
\begin{equation}\label{EquationPB}
i \stackrel{\pi}{\sim} j \Leftrightarrow X^{-1}(X_{a}U_{i})=X^{-1}(X_{a}U_{j})
\end{equation}
where $X^{-1}$ denotes the right continuous inverse of $t\mapsto X_t$. We also complete the definition by setting $\mathscr{P}(X) := \tun_{[\infty]}=\{\{1,2,3,\ldots\}\}$ on the event $\{X_a=0\}$.

\subsection{Continuous-state branching processes}
\label{SubsectionCSBP}
We recall the definition of the continuous-state branching processes introduced in the celebrated article of Jirina~\cite{Jirina58}. A continuous-state branching process (CSBP for short) started from $a \geq 0$ is a Markov process $(\rZ_t^{a},t\geq 0)$ with values in $[0,\infty]$ such that $(\rZ_t^{a+b},t\geq 0)$ has the same distribution as $(\rZ_t^a + \rZ_t^b,t \geq 0)$ where $\rZ^a$ and $\rZ^b$ are two independent copies started from $a$ and $b$ respectively. Such a process is entirely characterized by a convex function $\Psi : [0,+\infty) \rightarrow (-\infty,+\infty)$, called its branching mechanism, via the following identity
\begin{equation}
\mathbb{E}[e^{-\lambda \rZ_t^{a}}] = e^{-au_t(\lambda)},\;\forall\lambda >0
\end{equation}
where the function $u_t(\lambda)$ solves
\begin{equation}\label{EquationUt}
\frac{\partial u_t(\lambda)}{\partial t} = -\Psi(u_t(\lambda)),\;\;u_0(\lambda) = \lambda
\end{equation}
and $\Psi$ is the Laplace exponent of a spectrally positive L\'evy process. Thus $\Psi$ has the following form
\begin{equation}\label{EquationPsi}
\Psi(u) = \gamma u + \frac{\sigma^2}{2}u^2 + \int_{0}^{\infty}\!\!\big(e^{-hu}-1+hu\mathbf{1}_{\{h \leq 1\}}\big)\,\nu(dh)
\end{equation}
where $\gamma \in \mathbb{R},\sigma \geq 0$ and $\nu$ is a measure on $(0,\infty)$ such that $\int_{0}^{\infty}(1\wedge x^2)\nu(dx) < \infty$. In the sequel, we will omit the symbol $a$ and consider $a = 1$ as the results we will expose do not depend on this value. Note that the semigroup is Feller, so a $\Psi$-CSBP admits a c\`adl\`ag modification. In the rest of this subsection, we consider implicitly a c\`adl\`ag modification of $\rZ$.\\
We say that the $\Psi$-CSBP is subcritical, critical or supercritical according as $\Psi'(0+)$ is positive, null, or negative. Furthermore since $0$ and $\infty$ are two absorbing states, we introduce the following two stopping times, namely the extinction time and the explosion time by setting
\begin{equation}
\rT_0 := \inf{\{t \geq 0 : \rZ_t = 0\}},\;\;\rT_{\infty} := \inf\{t \geq 0 : \rZ_t = \infty\}
\end{equation}
Let also $\rT := \rT_0 \wedge \rT_{\infty}$ denote the \textit{lifetime} of the $\Psi$-CSBP $\rZ$. Classical results entail that $\mathbb{P}(\rZ_{\rT}=0) = e^{-q}$, and therefore $\mathbb{P}(\rZ_{\rT}=\infty) = 1-e^{-q}$ where $q := \sup\{u\geq 0:\Psi(u)\leq 0\}$. Note that we use the convention $\sup\mathbb{R}_+ = \infty$. In~\cite{Grey74}, Grey provided a complete classification of the possible behaviours of $\rZ$ at the end of its lifetime:\vspace{-12pt}
\paragraph{Extinction.} For all $t > 0$, we have $\mathbb{P}(\rT_0 \leq t)= e^{-u_t(\infty)}$ and
\begin{equation*}
u_t(\infty) < \infty \Leftrightarrow \Psi(v) > 0\mbox{ for large enough }v\mbox{ and }\int_v^{\infty}\frac{du}{\Psi(u)} < \infty
\end{equation*}
If $u_t(\infty)$ is finite, then $u_t(\infty) \downarrow q$ as $t\rightarrow\infty$. This ensures that on the event $\{\rZ_{\rT}=0\}$ either $\rT <\infty$ a.s., or $\rT=\infty$ a.s.\vspace{-8pt}
\paragraph{Explosion.} For all $t > 0$, we have
\begin{equation}\label{EqExplosion}
\mathbb{P}(\rT_{\infty} > t) = \lim\limits_{\lambda\rightarrow 0+}\mathbb{E}[e^{-\lambda \rZ_t}] = e^{-u_t(0+)}
\end{equation}
Using this last equality, Grey proved that $\rT_{\infty} \stackrel{a.s.}{=} \infty \Leftrightarrow \int_{0+}\frac{du}{\Psi(u)} = \infty$. When this condition holds, we say that the CSBP is conservative. Here again, on the event $\{\rZ_{\rT}=+\infty\}$ either $\rT <\infty$ a.s., or $\rT=\infty$ a.s.\\
The proofs of the following two lemmas are postponed to Section \ref{Proofs}.
\begin{lemma}\label{LemmaDistributionAbsContinuous}
On the event $\{\rT < \infty\}$, $\rT$ has a distribution absolutely continuous with respect to the Lebesgue measure on $\mathbb{R}_+$.
\end{lemma}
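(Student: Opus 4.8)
The plan is to decompose the lifetime $T=T_0\wedge T_\infty$ according to whether the population becomes extinct or explodes, and to treat the two contributions separately. Since $0$ and $\infty$ are absorbing, on the event $\{T<\infty\}$ the process leaves $(0,\infty)$ at a single finite time; if it is extinction then $T_0=T<\infty$ while $T_\infty=\infty$, and symmetrically for explosion. Hence $\{T<\infty\}=\{T_0<\infty\}\sqcup\{T_\infty<\infty\}$ is a disjoint union, with $T=T_0$ on the first event and $T=T_\infty$ on the second. Writing the law of $T$ restricted to $\{T<\infty\}$ as $\mu=\mu_0+\mu_\infty$, where $\mu_0(A)=\mathbb{P}(T_0\in A,\,T_0<\infty)$ and $\mu_\infty(A)=\mathbb{P}(T_\infty\in A,\,T_\infty<\infty)$, it suffices to show that each of $\mu_0$ and $\mu_\infty$ is absolutely continuous with respect to Lebesgue measure; the density of $\mu$ is then their sum.

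For the extinction part I would start from the identity $\mathbb{P}(T_0\le t)=e^{-u_t(\infty)}$. If extinction in finite time is impossible then $u_t(\infty)=\infty$ for every finite $t$, so $\mu_0=0$ and there is nothing to prove; otherwise Grey's criterion gives $u_{t_0}(\infty)<\infty$ for every $t_0>0$. The key step is to establish that $t\mapsto u_t(\infty)$ is continuously differentiable on $(0,\infty)$. To this end I would fix $t_0>0$, set $v_0:=u_{t_0}(\infty)<\infty$, and use the semigroup relation $u_{t_0+s}=u_s\circ u_{t_0}$ together with the continuity of $u_s$ in its argument to obtain $u_{t_0+s}(\infty)=u_s(v_0)$ for all $s\ge 0$. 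The right-hand side is a genuine solution of the autonomous ODE (\ref{EquationUt}) started from the finite value $v_0$, hence $C^1$ (indeed $C^\infty$, as $\Psi$ is smooth on $(0,\infty)$) in $s$; since $t_0>0$ is arbitrary, $t\mapsto u_t(\infty)$ is $C^1$ on $(0,\infty)$ and solves $\partial_t u_t(\infty)=-\Psi(u_t(\infty))$. Consequently $t\mapsto e^{-u_t(\infty)}$ is $C^1$ on $(0,\infty)$ with derivative $\Psi(u_t(\infty))\,e^{-u_t(\infty)}\ge 0$, which is precisely a Lebesgue density for $\mu_0$ on $(0,\infty)$. There is no atom at the endpoints: $u_t(\infty)\uparrow\infty$ as $t\downarrow 0$ forces $\mathbb{P}(T_0=0)=0$, while the mass escaping to $t=\infty$ corresponds to $\{T_0=\infty\}$, which lies outside $\{T<\infty\}$. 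Hence $\mu_0$ is absolutely continuous.

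The explosion part is entirely parallel, starting from Equation (\ref{EqExplosion}), namely $\mathbb{P}(T_\infty>t)=e^{-u_t(0+)}$. If the process is conservative then $u_t(0+)=0$ for all $t$ and $\mu_\infty=0$; otherwise $\int_{0+}\frac{du}{\Psi(u)}<\infty$ yields $u_{t_0}(0+)>0$ for every $t_0>0$, and the same semigroup argument applied to $u_{t_0+s}(0+)=u_s(u_{t_0}(0+))$ shows that $t\mapsto u_t(0+)$ is $C^1$ on $(0,\infty)$ and solves the ODE. Thus $t\mapsto 1-e^{-u_t(0+)}$ is $C^1$ with nonnegative derivative $-\Psi(u_t(0+))\,e^{-u_t(0+)}$ (nonnegative because $u_t(0+)$ stays in the region $(0,\gamma)$ where $\Psi\le 0$), providing a density for $\mu_\infty$, and $u_t(0+)\downarrow 0$ as $t\downarrow 0$ rules out an atom at $0$.

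I expect the only genuine obstacle to be the differentiability of $t\mapsto u_t(\infty)$ and of $t\mapsto u_t(0+)$, that is, justifying that one may differentiate through the limits $\lambda\uparrow\infty$ and $\lambda\downarrow 0$. The semigroup identity $u_{t+s}=u_s\circ u_t$ is exactly what makes this painless: it reduces the regularity of $u_\cdot(\infty)$ on $(0,\infty)$ to that of an ODE solution started at a positive time from an already finite value, so no delicate interchange of limit and derivative is required. Everything else—disjointness of the extinction and explosion events, smoothness of $\Psi$ on $(0,\infty)$, and the absence of atoms at the endpoints—is routine.
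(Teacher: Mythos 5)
Your proposal is correct and follows essentially the same route as the paper, which differentiates $t \mapsto \mathbb{P}(T\leq t)=e^{-u_t(\infty)}+1-e^{-u_t(0+)}$ using $\partial_t u_t=-\Psi(u_t)$; your extinction/explosion decomposition is exactly this identity. The only difference is that you spell out, via the semigroup relation $u_{t_0+s}=u_s\circ u_{t_0}$, the differentiability of $t\mapsto u_t(\infty)$ and $t\mapsto u_t(0+)$ that the paper dismisses as "a simple calculation".
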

For all $\epsilon \in (0,1)$, we introduce $\rT_\epsilon := \inf\{t \geq 0: \rZ_t \notin (\epsilon,1/\epsilon)\}$, and notice that $\rT_\epsilon < \rT$ a.s. for all $\epsilon \in (0,1)$.
\begin{lemma}\label{LemmaIntegrabilityCSBP}
For all $t \geq 0$ and all $\epsilon \in (0,1)$
\begin{equation*}
\mathbb{E}\left[\sum_{s\leq t\wedge \rT_\epsilon:\Delta \rZ_s> 0}\left(\frac{\Delta \rZ_s}{\rZ_s}\right)^2\right] < \infty
\end{equation*}
\end{lemma}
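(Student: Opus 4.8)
The plan is to reduce the random sum to an integral against the predictable compensator of the jump measure of $Z$, and then exploit the fact that $Z$ is trapped in $[\epsilon,1/\epsilon]$ up to time $T(\epsilon)$ to bound the resulting integrand by a constant.

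First I would recall the L\'evy system of the $\Psi$-CSBP. Via the Lamperti transformation, $Z$ is a time change of a spectrally positive L\'evy process $X$ with Laplace exponent $\Psi$, the clock being stretched by the factor $Z$ (so that the increment $du$ of $X$-time corresponds to $Z_{s-}\,ds$ of $Z$-time). Under this change the jump intensity $du\,\nu(dh)$ of $X$ is transported to the predictable compensator $Z_{s-}\,ds\,\nu(dh)$ of the jump point process $\sum_{s:\Delta Z_s>0}\delta_{(s,\Delta Z_s)}$ of $Z$. Consequently, for every nonnegative predictable integrand $F(s,h)$ and every stopping time $\tau$, the compensation formula reads
\[
\mathbb{E}\Big[\sum_{s\le\tau:\Delta Z_s>0}F(s,\Delta Z_s)\Big]=\mathbb{E}\Big[\int_0^{\tau}\!\!\int_0^\infty F(s,h)\,Z_{s-}\,\nu(dh)\,ds\Big].
\]
I would apply this with the stopping time $\tau=t\wedge T(\epsilon)$ and with $F(s,h)=\big(h/(Z_{s-}+h)\big)^2$, which is predictable since it depends only on the left limit $Z_{s-}$ and on $h$. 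At a jump time $s$ one has $Z_s=Z_{s-}+\Delta Z_s$, so $F(s,\Delta Z_s)=(\Delta Z_s/Z_s)^2$ is exactly the summand, and the quantity to control equals
\[
\mathbb{E}\Big[\int_0^{t\wedge T(\epsilon)}\!\!\int_0^\infty \Big(\frac{h}{Z_{s-}+h}\Big)^2 Z_{s-}\,\nu(dh)\,ds\Big].
\]

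Next I would use the key point that for $s\le T(\epsilon)$ the left limit satisfies $\epsilon\le Z_{s-}\le 1/\epsilon$, since $Z$ stays in the open interval $(\epsilon,1/\epsilon)$ on $[0,T(\epsilon))$ and $\Delta Z_s>0$. Splitting the inner integral at $h=1$: for $h\le 1$ I bound $(h/(Z_{s-}+h))^2 Z_{s-}\le h^2/Z_{s-}\le h^2/\epsilon$, so that part is at most $\epsilon^{-1}\int_0^1 h^2\nu(dh)$; for $h>1$ I bound $(h/(Z_{s-}+h))^2\le 1$ and $Z_{s-}\le 1/\epsilon$, so that part is at most $\epsilon^{-1}\nu((1,\infty))$. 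The integrability condition $\int_0^\infty(1\wedge h^2)\,\nu(dh)<\infty$ guarantees both $\int_0^1 h^2\nu(dh)<\infty$ and $\nu((1,\infty))<\infty$, whence the inner integral is bounded by a finite constant $C_\epsilon$ uniformly in $s$. Integrating over $s\in[0,t\wedge T(\epsilon)]\subseteq[0,t]$ yields the bound $C_\epsilon\,t<\infty$, which is the claim. Note that the possibly large jump occurring exactly at $T(\epsilon)$ (exit above the interval) needs no separate treatment: it is automatically accounted for by the compensator, and the uniform bound above already covers all jump sizes.

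The main obstacle is the rigorous identification of the predictable compensator $Z_{s-}\,ds\,\nu(dh)$ for the jumps of $Z$; everything afterwards is a routine two-regime estimate. I would establish it either by invoking the Lamperti representation and transporting the compensation formula for the L\'evy process $X$ through the time change, or by reading the jump intensity directly off the generator of the CSBP, which is linear in the state and hence produces the factor $Z_{s-}$. The only subtle verifications are the predictability of the integrand and the legitimacy of keeping $Z_{s-}$ in the denominator, both of which are immediate because the summand involves $Z_s=Z_{s-}+\Delta Z_s$ and the process is spectrally positive.
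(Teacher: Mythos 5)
Your argument is correct, but it takes a different route from the paper's. The paper never invokes the predictable compensator of the jump measure of $Z$ in this proof: it pulls the sum back through the Lamperti time change to the underlying spectrally positive L\'evy process $Y$, uses that the stopped clock satisfies $J(Y)_t\wedge T^{Y}(\epsilon)\le t/\epsilon$ on $\{Y\in(\epsilon,1/\epsilon)\}$, splits the jumps of $Y$ at size $1$, bounds $(\Delta Y_s/Y_s)^2$ by $\epsilon^{-2}(\Delta Y_s)^2$ resp.\ by $1$, and then applies the compensation formula for the L\'evy process over the deterministic horizon $[0,t/\epsilon]$ to get the bound $\frac{t}{\epsilon^3}\int_{(0,1)}h^2\nu(dh)+\frac{t}{\epsilon}\nu([1,\infty))$. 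You instead work directly at the level of the CSBP, using that the compensator of $\sum_{s:\Delta Z_s>0}\delta_{(s,\Delta Z_s)}$ is $Z_{s-}\,ds\,\nu(dh)$ and bounding the resulting integrand uniformly on $\{\epsilon\le Z_{s-}\le 1/\epsilon\}$; your two-regime estimate is correct (note $(h/(Z_{s-}+h))^2Z_{s-}\le h^2/Z_{s-}$ for $h\le1$, and $\le Z_{s-}\le1/\epsilon$ for $h>1$), and your integrand $F(s,h)=(h/(Z_{s-}+h))^2$ is indeed predictable and recovers $(\Delta Z_s/Z_s)^2$ at jump times. The one ingredient you correctly flag as needing justification --- the form of the compensator --- is exactly what the paper sidesteps here by changing time first; on the other hand, the paper itself asserts the (rescaled) compensator identity as ``well-known'' in the proof of its Proposition \ref{PropCompensator}, so your reliance on it is not circular. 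Your approach buys a cleaner, purely intrinsic statement with a time bound $C_\epsilon t$ rather than $C_\epsilon t/\epsilon$; the paper's approach buys self-containedness, since the compensation formula is only ever applied to a genuine L\'evy process over a deterministic interval. If you wanted to make your proof fully self-contained you would establish the compensator identity by transporting the L\'evy system of $Y$ through the Lamperti change of variables, which essentially reproduces the paper's computation.
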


\subsection{Measure-valued branching processes and flows of subordinators}\label{SubsectionMVBP}
In this subsection, we introduce the measure-valued branching processes associated to a branching mechanism $\Psi$. For the sake of simplicity, we will consider measures on the interval $[0,1]$, but the definition holds for any other compact interval. Let $\mathscr{M}_f$ denote the set of finite measures on $[0,1]$ and let $\Delta$ be an extra point that will represent infinite measures. We set $\overline{\mathscr{M}_f} := \mathscr{M}_f\cup\{\Delta\}$ and equip this space with the largest topology that makes continuous the map
\begin{eqnarray*}
[0,+\infty]\times\mathscr{M}_1 &\rightarrow& \overline{\mathscr{M}_f}\\
(\lambda,\mu) &\mapsto& \begin{cases}\lambda\cdot\mu&\mbox{ if }\lambda<\infty\\
\Delta&\mbox{ if }\lambda=\infty\end{cases}
\end{eqnarray*}
This topology is due to Watanabe~\cite{Watanabe68}.\\
We denote by $\cB^{++}$ the set of bounded Borel functions on $[0,1]$ that admit a strictly positive infimum. We call measure-valued branching process associated with the branching mechanism $\Psi$, or $\Psi$-MVBP in short, a $\overline{\mathscr{M}_f}$-valued Markov process $(\rmm_t,t \geq 0)$ started from a given measure $\rmm_0\in \overline{\mathscr{M}_f}$ that verifies for all $f \in \cB^{++}$
\begin{equation*}
\mathbb{E}\big[\exp(-\left\langle \rmm_t,f\right\rangle)\big] = \exp(-\left\langle \rmm_0,u_t\circ f\right\rangle)
\end{equation*}
Note that $\left\langle \Delta,f\right\rangle = +\infty$, thus $\Delta$ is an absorbing point. The existence of this process can be obtained using a flow of subordinators as it will be shown below. The uniqueness of the distribution derives from the Markov property and the characterization of the Laplace functional on $\cB^{++}$.

It is straightforward to check that the total-mass process $(\rmm_t([0,1]),t \geq 0)$ is a $\Psi$-CSBP, say $\rZ$, started from $\rmm_0([0,1])$. As proved in~\cite{ElKarouiRoelly91}, this process verifies the branching property: for every $\rmm_0,\rmm'_0 \in \mathscr{M}_f$, the process $(\rmm_t+\rmm'_t,t \geq 0)$ is a $\Psi$-MVBP started from $\rmm_0+\rmm'_0$, where $(\rmm_t,t\geq 0)$ and $(\rmm'_t,t \geq 0)$ are two independent $\Psi$-MVBP started from $\rmm_0$ and $\rmm'_0$ respectively.\\
Finally, from Lemma 3.5.1 in~\cite{Dawson93} one can prove that its semigroup verifies the Feller property. This implies that the $\Psi$-MVBP admits a c\`adl\`ag modification. In the rest of this subsection, we consider implicitly a c\`adl\`ag modification of $\rmm$ and will denote by $\rT$ the lifetime of its total-mass process (which is necessarily a c\`adl\`ag $\Psi$-CSBP).\\

Suppose that $(\rmm_t,t \geq 0)$ starts from the Lebesgue measure on $[0,1]$. It is then immediate to deduce that for all $t \geq 0$, the process $x\mapsto \rmm_t([0,x])$ is a (possibly killed) subordinator whose Laplace exponent is given by $(u_t(\lambda),\lambda >0)$. From the L\'evy-Khintchine formula, we deduce that there exists a real number $d_t \geq 0$ and a measure $w_t$ on $(0,\infty)$ that verifies $\int_{0}^{\infty}(1\wedge h)w_t(dh) < \infty$, such that
\begin{equation}\label{EquationLaplaceExponent}
u_t(\lambda) = u_t(0+) + d_t\lambda + \int_{0}^{\infty}(1-e^{-\lambda h})\,w_t(dh)\mbox{ , for all }\lambda > 0
\end{equation}
Notice that $u_t(0+)$ is the instantaneous killing rate of the subordinator, which is related to the explosion of the total mass process $(\rmm_t([0,1]),t\geq 0)$. Indeed $\mathbb{P}(\rT_{\infty} \leq t)= \mathbb{P}(\rmm_t([0,1]) = \infty) = 1-e^{-u_t(0+)}$, see Equation (\ref{EqExplosion}).\\

From this observation, Bertoin and Le Gall introduced an object called flow of subordinators. Proposition 1 in~\cite{BertoinLeGall-0} asserts the existence of a process $(\rS_{s,t}(a),0 \leq s \leq t, a \geq 0)$ such that
\begin{itemize}
\item For every $0 \leq s \leq t$, $(\rS_{s,t}(a),a\geq 0)$ is a subordinator with Laplace exponent $u_{t-s}$.
\item For every integer $p\geq 2$ and $0 \leq t_1\leq \ldots \leq t_p$, the subordinators $\rS_{t_1,t_2},\ldots,\rS_{t_{p-1},t_p}$ are independent and
\begin{equation*}
\rS_{t_1,t_p}(a) = \rS_{t_{p-1},t_p}\circ\ldots\circ \rS_{t_1,t_2}(a),\ \forall a \geq 0\mbox{ a.s. (cocycle property)}
\end{equation*}
\item For all $a \geq 0$, $(\rS_{0,t}(a),t\geq 0)$ is a $\Psi$-CSBP started from $a$.
\end{itemize}
Actually in their construction, they excluded the non-conservative branching mechanisms but one can easily adapt their proof to the general case.\\
Let us now present the connection with the $\Psi$-MVBP. Introduce the random Stieltjes measures
\begin{equation*}
\rmm_{0,t}(dx) := d_x\rS_{0,t}(x),\;\forall x \in[0,1]
\end{equation*}
From the very definition of the flow of subordinators, one can prove that $(\rmm_{0,t},t\geq 0)$ is a $\Psi$-MVBP started from the Lebesgue measure on $[0,1]$. One can consider a c\`adl\`ag modification still denoted $(\rmm_{0,t},t\geq 0)$, and let $\rT^\rS$ be the lifetime of its total-mass process $\rS_t:=\rmm_{0,t}([0,1]),t\geq 0$ (which is a c\`adl\`ag $\Psi$-CSBP). It is then natural to introduce the random Stieltjes measures
\begin{equation*}
\rmm_{s,t}(dx) := d_x\rS_{s,t}(x),\;\forall x \in[0,\rS_s]
\end{equation*}
for every $0 < s \leq t < \rT^\rS$. Each process $(\rmm_{s,t},t\in[s,\rT^\rS))$ is a $\Psi$-MVBP and admits a c\`adl\`ag modification still denoted $(\rmm_{s,t},t\in[s,\rT^\rS))$. Then we obtain a flow of $\Psi$-MVBP $(\rmm_{s,t},0 \leq s \leq t < \rT^\rS)$ that describes the evolution of an initial population $[0,1]$.

\section{Flows of partitions and the lookdown representation}\label{SectionFlow}
The goal of this section is to develop the construction of $\Psi$ flows of partitions presented in the introduction. To that end, we first recall the definition of deterministic flows of partitions as introduced in~\cite{Labbe11} since the one-to-one correspondence with lookdown graphs is deterministic. Then, we define a random point process ${\cal P}$ pathwise from a c\`adl\`ag $\Psi$-CSBP $\rZ$, which will allow us to construct a $\Psi$ flow of partitions. Finally we give a precise characterization of its jump rates which will be necessary in the proof of Theorem \ref{TheoremLimit}, this last subsection can be skipped on first reading.

\subsection{Deterministic flows of partitions}
\label{SubsectionDetFoP}
Fix $T \in (0,+\infty]$. In~\cite{Labbe11}, we introduced deterministic flows of partitions and proved they are in one-to-one correspondence with the so-called lookdown graphs. Lookdown graphs are implicit in the lookdown construction of Donnelly and Kurtz~\cite{DK99}, and the upshot of the flows of partitions is to clarify and formalize this notion. In the present paper, we do not recall the definition of the lookdown graph and we refer to~\cite{Labbe11} for further details.\\
Below this formal definition of deterministic flows of partitions, the reader should find intuitive comments.
\begin{definition}
A deterministic flow of partitions on $[0,T)$ is a collection $\hat{\pi} = (\hat{\pi}_{s,t},0 \leq s \leq t < T)$ of partitions such that
\begin{itemize}
\item For every $r < s < t \in [0,T)$, $\hat{\pi}_{r,t} = \Coag(\hat{\pi}_{s,t},\hat{\pi}_{r,s})$.
\item For every $s \in (0,T)$, $\lim\limits_{r\uparrow s}\lim\limits_{t\uparrow s}\hat{\pi}_{r,t} =:\hat{\pi}_{s-,s-} = \tO_{[\infty]}$.
\item For every $s \in [0,\rT)$, $\lim\limits_{t\downarrow s}\hat{\pi}_{s,t} = \hat{\pi}_{s,s} = \tO_{[\infty]}$.
\end{itemize}
Furthermore, if for all $s \in (0,T)$, $\hat{\pi}_{s-,s}$ has at most one unique non-singleton block, then we say that $\hat{\pi}$ is a deterministic flow of partitions without simultaneous mergers.
\end{definition}
The first property asserts a cocycle property for the collection of partitions: the evolution forward-in-time is obtained by coagulating consecutive partitions. The second and third properties ensure that for all $n\geq 1$ and every compact interval $[r,t] \subset [0,T)$, only a finite number of partitions $\hat{\pi}_{s-,s}^{[n]}$ differ from the trivial partition $\tO_{[n]}$. Note that in this paper, we will only consider flows of partitions without simultaneous mergers.\vspace{3pt}\\
\textit{Construction from a point process}\\
Let $p$ be a \textit{deterministic} point process on $[0,T)\times\mathscr{P}_{\infty}^*$ whose restriction to any subset of the form $(s,t]\times\mathscr{P}_{n}^*$ has finitely many points. Fix an integer $n \in \mathbb{N}$ and two real numbers $s \leq t \in [0,T)$. Let $(t_i,\varrho_i)_{1 \leq i \leq q}$ be the finitely many points of $p_{|(s,t]\times\mathscr{P}^*_{n}}$ in the increasing order of their time coordinate. We introduce
\begin{equation}
\hat{\pi}^{[n]}_{s,t} := \Coag(\varrho_q,\Coag(\varrho_{q-1},\ldots,\Coag(\varrho_{2},\varrho_1)\ldots))
\end{equation}
Obviously, the collection of partitions $(\hat{\pi}^{[n]}_{s,t},n \in \mathbb{N})$ is compatible and defines by a projective limit a unique partition $\hat{\pi}_{s,t}$ such that its restriction to $[n]$ is $\hat{\pi}^{[n]}_{s,t}$, for each $n \in \mathbb{N}$. Then, one easily verifies that $(\hat{\pi}_{s,t},0 \leq s \leq t < T)$ is a deterministic flow of partitions without simultaneous mergers.
\begin{remark}
This construction gives a hint of the one-to-one correspondence with lookdown graphs. See~\cite{Labbe11} for further details.
\end{remark}

We can now introduce the lookdown representation using a deterministic flow of partitions. Let $(\upxi_{s,s}(i))_{i\geq 1}$ be a sequence of points in $\mathbb{R}_+$ and define the particle system $(\upxi_{s,t}(i),t \in [s,T))_{i\geq 1}$ as follows. For all $t \geq s$ and all $i,j \geq 1$,
\begin{equation}
\upxi_{s,t}(j) = \upxi_{s,s}(i) \Leftrightarrow j \in \hat{\pi}_{s,t}(i)
\end{equation}
\vspace{-15pt}
\begin{definition}
We use the notation $\mathscr{L}_s(\hat{\pi},(\upxi_{s,s}(i))_{i\geq 1})$ to denote the deterministic lookdown function $(\upxi_{s,t}(i),t \in [s,\infty))_{i\geq 1}$ defined from the flow of partitions $\hat{\pi}$ and the initial types $(\upxi_{s,s}(i))_{i\geq 1}$.
\end{definition}
Moreover, for all $t \in [s,T)$, set
\begin{equation}
\Xi_{s,t}(dx) := \lim\limits_{n\rightarrow\infty}\frac{1}{n}\sum_{i=1}^{n}\delta_{\upxi_{s,t}(i)}(dx)
\end{equation}
when this is well-defined.
\begin{definition}
We denote by $\mathscr{E}_s(\hat{\pi},(\upxi_{s,s}(i))_{i\geq 1})$ the collection of limiting empirical measures $(\Xi_{s,t},t \in [s,\infty))$ defined from the flow of partitions $\hat{\pi}$ and the initial types $(\upxi_{s,s}(i))_{i\geq 1}$, when it exists.
\end{definition}
Let us give an intuitive explanation of this particle system. If one considers each point $\upxi_{s,s}(i)$ as some characteristic (type or location for instance) of the $i$-th ancestor at time $s$, then the underlying idea of the lookdown representation is to give the same characteristic to the descendants of this ancestor at any time $t > s$. Therefore, the measure $\Xi_{s,t}(dx)$ describes the composition of the population at time $t$: $\Xi_{s,t}(\{\upxi_{s,s}(i)\})$ is the proportion of individuals at time $t$ who descend from the $i$-th ancestor alive at time $s$. In the next subsection, we will see that if one applies this scheme with a random flow of partitions, whose distribution is well chosen, then $\Xi$ is a MVBP (rescaled by its total-mass).

\subsection{Stochastic flows of partitions associated with a branching mechanism}\label{SubsectionStoFlow}
We randomize the previous definitions using a point process ${\cal P}$ on $[0,\rT)\times\mathscr{P}_{\infty}$ where $\rT$ is a random positive time in order to introduce flows of partitions associated with a branching mechanism $\Psi$. As mentioned in the introduction, this point process is obtained as the union of two point processes: ${\cal N}_{\sigma}$ that stands for the binary reproduction events due to the diffusion of the underlying CSBP, and ${\cal N}_{\nu}$ that encodes the positive frequency reproduction events due to the jumps of the CSBP. As these objects rely on many definitions, one should refer on first reading to the heuristic definitions given in the introduction.\\

For every $z > 0$, we introduce the map $\phi_z:\mathbb{R}_+^*\rightarrow [0,1]$, that will be used to consider rescaled jumps of a CSBP, by setting
\begin{equation*}
\phi_z:h\mapsto\frac{h}{h+z}
\end{equation*}
We define a measure $\mu^{\mbox{\tiny binary}}$ on $\mathscr{P}_{\infty}$ that will encode binary reproduction events often called Kingman reproduction events. Recall that $\tun_{\{i,j\}}$ stands for the element of $\mathscr{P}_{\infty}^*$ whose unique non-singleton block is $\{i,j\}$ for every integers $1 \leq i < j$.
\begin{equation}
\mu^{\mbox{\tiny binary}}(d\pi) := \sum_{i<j}\delta_{\tun_{\{i,j\}}}(d\pi)
\end{equation}

This ends the introduction of preliminary notation. Fix a branching mechanism $\Psi$ and consider a $\Psi$-CSBP $(\rZ_t,t \geq 0)$ started from $1$ assumed to be c\`adl\`ag. We keep the notation of Section \ref{SubsectionCSBP}, in particular $\rT$ denotes the lifetime of $\rZ$.
We start with the definition of ${\cal N}_{\nu}$. Consider the random point process
\begin{equation*}
{\cal Q} := \underset{\{t\geq 0: \Delta \rZ_t > 0\}}{\bigcup}\Big\{\Big(t,\frac{\Delta \rZ_t}{\rZ_t}\Big)\Big\}
\end{equation*}
and define a $\mathscr{P}$-randomization ${\cal N}_{\nu}$ of ${\cal Q}$ in the sense of Chapter 12 in~\cite{KallenbergBook}, where $\mathscr{P}$ is the paint-box probability kernel introduced in Subsection \ref{SubsectionPartitions}. The point process ${\cal N}_{\nu} := \cup\{(t,\frac{\Delta \rZ_t}{\rZ_t},\varrho_t)\}$ on $\mathbb{R}_+\times[0,1]\times\mathscr{P}_{\infty}$ can be described as follows. For all $t \geq 0$ such that $\Delta \rZ_t > 0$, $\varrho_t$ is a r.v. on $\mathscr{P}_{\infty}$ distributed according to the paint-box distribution $\mathscr{P}(\frac{\Delta \rZ_t}{\rZ_t},0,\ldots)$. It is more convenient to consider the restriction of this point process to $\mathbb{R}_+\times\mathscr{P}_{\infty}$ still denoted by ${\cal N}_{\nu} = \cup\{(t,\varrho_t)\}$.\vspace{5pt}

Second, we define a doubly stochastic Poisson point process ${\cal N}_{\sigma}$ on $\mathbb{R}_+\times\mathscr{P}_{\infty}$, in the sense of Chapter 12 in~\cite{KallenbergBook}, with a random intensity measure given by
\begin{equation}
\mathbf{1}_{\{t<\rT\}}dt\otimes\frac{\sigma^2}{\rZ_{t}}\mu^{\mbox{\tiny binary}}(d\pi)
\end{equation}

We finally define the point process $\cal P$ on $\mathbb{R}_+\times\mathscr{P}_{\infty}$ as
\begin{equation}
{\cal P} := {\cal N}_{\sigma} \cup {\cal N}_{\nu}
\end{equation}
Notice that almost surely this point process takes its values in $\mathbb{R}_+\times\mathscr{P}_{\infty}^*$, and has finitely many points in any set of the form $[0,t]\times\mathscr{P}_{n}^*$ with $t < \rT$ and $n \in \mathbb{N}$, as we will see in Proposition \ref{PropCompensator}. Thus for each $\omega\in\Omega$, we define a deterministic flow of partitions without simultaneous mergers $(\hat{\Pi}_{s,t}(\omega),0 \leq s \leq t < \rT)$ using the point collection ${\cal P}(\omega)$ and the pathwise construction of Subsection \ref{SubsectionDetFoP}.\\

Let us now explain how one defines a lookdown process associated with a $\Psi$-MVBP. Fix $s \geq 0$ and condition on $\{s < \rT\}$. Consider a sequence $(\upxi_{s,s}(i))_{i\geq 1}$ of i.i.d. uniform$[0,1]$ r.v. and define the lookdown process $(\upxi_{s,t}(i),t \in [s,\rT))_{i\geq 1} := \mathscr{L}_s(\hat{\Pi},(\upxi_{s,s}(i)))_{i\geq 1}$. Lemma 3.5 in~\cite{DK99} ensures that almost surely this particle system admits a process of limiting empirical measures $(\Xi_{s,t},t \in [s,\rT)) := \mathscr{E}_s(\hat{\Pi},(\upxi_{s,s}(i))_{i\geq 1})$, and almost surely for all $t \in [s,\rT)$ we have
\begin{equation*}
\Xi_{s,t}(dx) = \sum_{i\geq 1}|\hat{\Pi}_{s,t}(i)|\delta_{\upxi_{s,s}(i)}(dx) + \Big(1-\sum_{i\geq 1}|\hat{\Pi}_{s,t}(i)|\Big)dx
\end{equation*}
Moreover, Section 2 in~\cite{Article7} shows that the process $(\rZ_t\cdot\Xi_{s,t}(\rZ_s\cdot dx),t \in [s,\rT))$ is a c\`adl\`ag $\Psi$-MVBP started from the Lebesgue measure on $[0,\rZ_s]$, conditionally on $\rZ_s$.
\begin{remark}
The results in~\cite{Article7,DK99} are stated with the usual notion of lookdown graph. But they are immediately translated in terms of flows of partitions thanks to our one-to-one correspondence.
\end{remark}
\begin{remark}
We can define from any time $s \in [0,\rT)$, a $\Psi$-MVBP with total-mass process $\rZ$ using an independent sequence of initial types $(\upxi_{s,s}(i))_{i\geq 1}$ and the flow $\hat{\Pi}$. Then, it could seem simple to define a flow of $\Psi$-MVBP using this lookdown representation simultaneously for all $s \in [0,\rT)$. However, this is far from being trivial since the initial types $s \mapsto (\upxi_{s,s}(i))_{i\geq 1}$ have to be suitably coupled. In Section \ref{SectionPathwise}, we will show that these initial types have to be the Eves.
\end{remark}
An important property of the lookdown process (see~\cite{DK99}) is that for all $t\geq s$, conditionally given $\rZ_t$ the sequence $(\upxi_{s,t}(i))_{i\geq 1}$ is exchangeable on $[0,1]$. This implies that conditionally given $\rZ_t$ the partition $\hat{\Pi}_{s,t}$ has the paint-box distribution on the subordinator $x\mapsto \rZ_t\cdot\Xi_{s,t}([0,x])$. More generally, we have
\begin{theorem*}\textup{\textbf{\ref{TheoremFoP}}}
The collection of partitions $(\hat{\Pi}_{s,t},0 \leq s \leq t < \rT)$, together with its underlying CSBP $\rZ$, satisfies
\begin{itemize}
\item For all $n\geq 1$ and all $0 < t_1 < \ldots < t_n$, 
\begin{equation*}
(\rZ_{t_1},\ldots,\rZ_{t_n},\hat{\Pi}_{0,t_1},\ldots,\hat{\Pi}_{t_{n-1},t_n}|t_n < \rT)\! \stackrel{(d)}{=}\! (\rS_{t_1},\ldots,\rS_{t_n},\mathscr{P}(\rS_{0,t_1}),\ldots,\mathscr{P}(\rS_{t_{n-1},t_n})|t_n<\rT^\rS)
\end{equation*}
\item For all $0 \leq r < s < t < \rT$, a.s. $\hat{\Pi}_{r,t} = \Coag(\hat{\Pi}_{s,t},\hat{\Pi}_{r,s})$ (cocycle property).
\end{itemize}
\end{theorem*}
\begin{proof}
The cocycle property is a consequence of our construction as we have defined the restrictions of the partitions by coagulating elementary reproduction events. We turn our attention to the finite dimensional distributions. Fix an integer $n \geq 1$ and a $n$-tuple $0 = t_0 < t_1 <\ldots < t_n$. Let $(\rS_{s,t}(a),0 \leq s \leq t,0 \leq a \leq \rS_{0,s}(z))$ be a $\Psi$ flow of subordinators restricted to an initial population $[0,z]$ instead of $[0,1]$, and keep the notation $\rT^\rS$ to denote the lifetime of the total mass process $(\rS_t:=\rS_{0,t}(z),t\geq 0)$ which is a $\Psi$-CSBP started from $z$. For every $i\in[n]$, let $\rH_{t_{i-1},t_{i}}$ be distributed according to the paint-box $\mathscr{P}(\rS_{t_{i-1},t_i})$. Note that $\rH_{0,t_1},\ldots,\rH_{t_{n-1},t_{n}}$ are coupled only through their mass-partitions. We use our construction of the beginning of this subsection to define pathwise from the CSBP $(\rS_t,t\geq 0)$ a collection $(\hat{\Pi}^\rS_{s,t},0 \leq s \leq t < \rT^\rS)$. We will use the notation $\mathbb{P}_z$ to emphasize the dependence on the initial value $z$. Implicitly, $f_i$ will denote a bounded Borel map from $\mathscr{P}_{\infty}$ to $\mathbb{R}$ and $g_i$ a bounded Borel map from $\mathbb{R}_+$ to $\mathbb{R}$. We now prove by recursion on $n\geq 1$ that
\begin{eqnarray*}
&&\mathbb{E}_z[f_1(\hat{\Pi}^\rS_{0,t_1})g_1(\rS_{t_1})\ldots f_{n}(\hat{\Pi}^\rS_{t_{n-1},t_{n}})g_{n}(\rS_{t_{n}})\mathbf{1}_{\{t_{n} < \rT^\rS\}}]\\
&=&\mathbb{E}_z[f_1(\rH_{0,t_1})g_1(\rS_{t_1})\ldots f_n(\rH_{t_{n-1},t_n})g_n(\rS_{t_n})\mathbf{1}_{\{t_n < \rT^\rS\}}]
\end{eqnarray*}
The case $n=1$ follows from the discussion above the statement of the theorem. Fix $n\geq 2$ and suppose that for all $z > 0$ and all $f_1,\ldots,f_{n-1}, g_1,\ldots,g_{n-1}$, we have
\begin{eqnarray*}
&\mathbb{E}_z[f_1(\hat{\Pi}^\rS_{0,t_1})g_1(\rS_{t_1})\ldots f_{n-1}(\hat{\Pi}^\rS_{t_{n-2},t_{n-1}})g_{n-1}(\rS_{t_{n-1}})\mathbf{1}_{\{t_{n-1} < \rT^\rS\}}]\\
&=\mathbb{E}_z[f_1(\rH_{0,t_1})g_1(\rS_{t_1})\ldots f_{n-1}(\rH_{t_{n-2},t_{n-1}})g_{n-1}(\rS_{t_{n-1}})\mathbf{1}_{\{t_{n-1} < \rT^\rS\}}]
\end{eqnarray*}
Then, we obtain at rank $n$ for any given $z > 0$ and any $f_1,\ldots,f_n,g_1,\ldots,g_n$ 
\begin{eqnarray*}
&&\mathbb{E}_z[f_1(\hat{\Pi}^\rS_{0,t_1})g_1(\rS_{t_1})\ldots f_n(\hat{\Pi}^\rS_{t_{n-1},t_n})g_n(\rS_{t_n})\mathbf{1}_{\{t_n < \rT^\rS\}}]\\
&=&\mathbb{E}_z\left[f_1(\hat{\Pi}^\rS_{0,t_1})g_1(\rS_{t_1})\mathbf{1}_{\{t_1<\rT^\rS\}}\mathbb{E}_{\rS_{t_1}}\big[f_2(\hat{\Pi}^\rS_{0,t_2-t_1})\ldots f_n(\hat{\Pi}^\rS_{t_{n-1}-t_1,t_n-t_1})g_n(\rS_{t_n-t_1})\mathbf{1}_{\{t_n-t_1 < \rT^\rS\}}\big]\right]\\
&=&\mathbb{E}_z\left[f_1(\rH_{0,t_1})g_1(\rS_{t_1})\mathbf{1}_{\{t_1<\rT^\rS\}}\mathbb{E}_{\rS_{t_1}}\big[f_2(\rH_{0,t_2-t_1})\ldots f_n(\rH_{t_{n-1}-t_1,t_n-t_1})g_n(\rS_{t_n-t_1})\mathbf{1}_{\{t_n-t_1 < \rT^\rS\}}\big]\right]\\
&=&\mathbb{E}_z[f_1(\rH_{0,t_1})g_1(\rS_{t_1})\ldots f_n(\rH_{t_{n-1},t_n})g_n(\rS_{t_n})\mathbf{1}_{\{t_n < \rT^\rS\}}]
\end{eqnarray*}
where the first (resp. last) equality comes from the Markov property applied to the process $(\rS_t,t\in[0,\rT^\rS))$ (resp. to the homogeneous chain $(\rS_{t_i},\rH_{t_{i-1},t_i},t_{i+1}-t_i)_{1\leq i \leq n}$) while the second equality makes use of the recursion hypothesis and the case $n=1$.\cqfd
\end{proof}
This result motivates the following definition.
\begin{definition}
A collection of random partitions $(\hat{\Pi}_{s,t},0 \leq s \leq t < \rT)$ defined on a same probability space as a c\`adl\`ag $\Psi$-CSBP $(\rZ_t,t\in [0,\rT))$ and that verifies
\begin{itemize}
\item For all $n\geq 1$ and all $0 < t_1 < \ldots < t_n$, 
\begin{equation*}
(\rZ_{t_1},\ldots,\rZ_{t_n},\hat{\Pi}_{0,t_1},\ldots,\hat{\Pi}_{t_{n-1},t_n}|t_n < \rT)\! \stackrel{(d)}{=}\! (\rS_{t_1},\ldots,\rS_{t_n},\mathscr{P}(\rS_{0,t_1}),\ldots,\mathscr{P}(\rS_{t_{n-1},t_n})|t_n<\rT^\rS)
\end{equation*}
\item For all $0 \leq r < s < t < \rT$, a.s. $\hat{\Pi}_{r,t} = \Coag(\hat{\Pi}_{s,t},\hat{\Pi}_{r,s})$ (cocycle property).
\end{itemize}
is called a $\Psi$ flow of partitions. $\rZ$ is called its underlying CSBP.
\end{definition}
\begin{remark}
In our construction from a point process, we can verify that the cocycle property is fulfilled almost surely simultaneously for all triplets, that is,
\begin{equation*}
\mathbb{P}\big[\,\forall\, 0 \leq r < s < t < \rT,\;\hat{\Pi}_{r,t} = \Coag(\hat{\Pi}_{s,t},\hat{\Pi}_{r,s})\big]=1
\end{equation*}
This is not necessarily the case for a general $\Psi$ flow of partitions: however Proposition \ref{PropositionRegularization} will show that we can define a regularized modification which fulfils that property.
\end{remark}

\subsection{A characterization of the jump rates}\label{SubsectionJumpRates}
The formalism of partitions enables one to restrict to $n$ individuals sampled uniformly among the population. In this subsection, we give a characterization of the dynamics of this finite-dimensional process. The restriction of $\cal P$ to $\mathbb{R}_+\times\mathscr{P}_{n}^*$ is denoted ${\cal P}^{[n]}$. We introduce, for any integer $2 \leq k \leq n$ and any subset $\scK \subset [n]$ such that $\#\scK = k$, the quantity
\begin{equation}
\rL_{t}(n,\scK) := \#\big\{r \in (0,t] : (r,\tun_\scK^{[n]}) \in {\cal P}^{[n]}\big\}
\end{equation}
where $\tun_\scK^{[n]}$ is the restriction of $\tun_\scK$ to $[n]$ and $\tun_\scK$ is the partition whose unique non-singleton block is $\scK$. Moreover, we set
\begin{equation}
\rL_{t}(n) := \sum_{\{\scK \subset[n]:\#\scK\geq 2\}} \rL_{t}(n,\scK)
\end{equation}
In words, $\rL_{t}(n)$ is the total number of points of ${\cal P}$ restricted to $(0,t]\times{\mathscr{P}_{n}^*}$. Note that the collection of processes $\{(\rL_t(n,\scK),t \in [0,\rT));\scK \subset [n],\#\scK\geq 2\}$ is completely equivalent with the restricted flow $(\hat{\Pi}_{s,t}^{[n]},0 \leq s \leq t < \rT)$ : the knowledge of any one of them is sufficient to recover the other. We denote by $d_n$ the number of subsets of $[n]$ with at least $2$ elements, that is, $d_n := \sum_{k=2}^{n}\binom{n}{k}$ and we introduce the filtration $({\cal F}_t, t\geq 0)$ by setting for all $t \geq 0$
\begin{equation}
{\cal F}_t := \sigma\big\{\rZ_s,s \in [0,t]\big\}\bigvee\sigma\big\{{\cal P}_{[0,t]\times\mathscr{P}_{\infty}}\big\}
\end{equation}
For every integer $k$ such that $2 \leq k \leq n$, we set
\begin{equation*}
\lambda_{n,k}(z,\Psi) := \int_{0}^{1}x^k(1-x)^{n-k}\Big(\frac{\sigma^2}{z}x^{-2}\delta_{0}(dx)+z\,\nu\circ\phi_z^{-1}(dx)\Big)
\end{equation*}
where $\nu\circ\phi_z^{-1}$ is the pushforward measure of $\nu$ through the map $\phi_z$. Notice that $\lambda_{n,k}$ can be seen as a map from $\mathbb{R}_{+}^*\times\mathscr{M}_f(\mathbb{R_+})$ to $\mathbb{R}_+$. Indeed, any element of $\mathscr{M}_f(\mathbb{R_+})$ has the form $\tilde{\sigma}^2\delta_{0}(dh) + (1\wedge h^2)\tilde{\nu}(dh)$, where $\tilde{\sigma} \geq 0$ and $\tilde{\nu}$ is a measure on $(0,\infty)$ such that $\int_{0}^{\infty}(1\wedge h^2)\tilde{\nu}(dh) < \infty$, so it can be associated to the branching mechanism $\tilde{\Psi}$ defined by the triplet $(\tilde{\gamma} = 0,\tilde{\sigma},\tilde{\nu})$.\\
This ends the introduction of notation. We now state two results: the first is a technical continuity statement and the second is the characterization of the jump rates. They will be of main importance for the proof of Theorem \ref{TheoremLimit}.
\begin{proposition}\label{PropContinuityLambda}
Fix $k,n$ such that $2 \leq k \leq n$. The map $\lambda_{n,k}$ is continuous from $\mathbb{R}_{+}^*\times\mathscr{M}_f(\mathbb{R_+})$, endowed with the product topology, to $\mathbb{R}_+$.
\end{proposition}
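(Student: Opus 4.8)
The plan is to reduce everything to a single integral of an explicit bounded continuous kernel against the finite measure associated with $\Psi$, and then pass to the limit by a standard ``moving test function'' argument. Recall that $\lambda_{n,k}$ depends on $\Psi$ only through the pair $(\sigma,\nu)$, i.e.\ through the finite measure $\mu\in\mathscr{M}_f(\mathbb{R}_+)$ given by $\mu(\{0\}) = \sigma^2$ and $\mu(dh) = (1\wedge h^2)\nu(dh)$ on $(0,\infty)$. The whole point is to express $\lambda_{n,k}(z,\Psi)$ as $\int g_z\,d\mu$ for a kernel $g_z$ that is jointly continuous in $(z,h)$ and bounded, so that both the dependence on $z$ and on $\mu$ become transparent.

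First I would treat the two parts separately. For the jump part I perform the change of variables $y=\phi_z(h)=h/(h+z)$: since $\phi_z(h)^k(1-\phi_z(h))^{n-k} = h^k z^{n-k}/(h+z)^n$, the pushforward integral equals $z^{n-k+1}\int_{(0,\infty)} h^k(h+z)^{-n}\nu(dh)$. For the diffusion part, the singular weight $\frac{\sigma^2}{z}x^{-2}\delta_0$ evaluates $x^{k-2}(1-x)^{n-k}$ at $x=0$, contributing $\frac{\sigma^2}{z}\mathbf{1}_{\{k=2\}}$. Rewriting both against $\mu$ yields
\[
\lambda_{n,k}(z,\Psi) = \int_{[0,\infty)} g_z(h)\,\mu(dh), \qquad g_z(h) := z^{n-k+1}\frac{h^k}{(h+z)^n(1\wedge h^2)}\ (h>0),\quad g_z(0):=\frac{\mathbf{1}_{\{k=2\}}}{z}.
\]

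Next I would establish the analytic properties of $g_z$. The crucial check is continuity at $h=0$: for small $h$ one has $1\wedge h^2=h^2$, so $g_z(h)=z^{n-k+1}h^{k-2}(h+z)^{-n}$, whose limit as $h\downarrow 0$ is exactly $g_z(0)$. This is precisely why the diffusion atom appears as the continuous extension of the jump kernel, and why the normalization $(1\wedge h^2)$ is the correct one. At the other end $g_z(h)\to z\,\mathbf{1}_{\{k=n\}}$ as $h\to\infty$, so $g_z$ extends continuously to the one-point compactification $[0,\infty]$ and is bounded; joint continuity of $(z,h)\mapsto g_z(h)$ on $(0,\infty)\times[0,\infty]$ is clear away from the endpoints and follows from the same expansions at $0$ and $\infty$. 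By compactness of $[0,\infty]$ this gives local uniform convergence $\sup_{h}|g_{z_m}(h)-g_z(h)|\to 0$ whenever $z_m\to z$ in $\mathbb{R}_+^*$. Then, for $z_m\to z$ and $\mu_m\to\mu$ in $\mathscr{M}_f(\mathbb{R}_+)$, I split
\[
\Big|\int g_{z_m}\,d\mu_m - \int g_z\,d\mu\Big| \le \|g_{z_m}-g_z\|_\infty\,\mu_m(\mathbb{R}_+) + \Big|\int g_z\,d\mu_m - \int g_z\,d\mu\Big|.
\]
The first term vanishes by the uniform convergence above together with the boundedness of $\mu_m(\mathbb{R}_+)$ (obtained by testing the constant function $1$), and the second vanishes because $g_z$ is a fixed bounded continuous function.

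The hard part is not the limiting argument but the reformulation and the endpoint analysis: one must verify that the $x^{-2}\delta_0$ singularity of the diffusion term glues continuously onto the jump kernel at $h=0$, and control the kernel uniformly near $h=\infty$ (where, for $k=n$, it does not vanish). If $\mathscr{M}_f(\mathbb{R}_+)$ carries the narrow topology, these are exactly the two places needing care, since no mass can escape to the boundary and $g_z\in C_b([0,\infty))$ suffices; under a merely vague topology the $k=n$ behaviour at infinity would demand an additional tightness input.
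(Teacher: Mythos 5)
Your proof is correct and follows essentially the same strategy as the paper's: rewrite $\lambda_{n,k}(z,\Psi)$ as the integral of an explicit kernel in $h$ against the finite measure $\sigma^2\delta_0(dh)+(1\wedge h^2)\nu(dh)$, then split the difference into a term where only the measure varies (handled by weak convergence of the finite measures tested against a fixed bounded continuous function) and a term where only $z$ varies (handled by uniform convergence of the kernels). The only real difference is in execution: where you get $\sup_h|g_{z_m}(h)-g_z(h)|\to 0$ from joint continuity on the compactified domain $(0,\infty)\times[0,\infty]$, the paper establishes the analogous uniform estimates by hand, term by term (its $B_m$, $C_m$, $D_m$); your closing remark about narrow versus vague convergence is also on point, since the paper's hypothesis is indeed narrow convergence of finite measures (total masses converge), which is exactly what both arguments need to control the kernel near $h=\infty$ in the case $k=n$.
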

The proof of this first result will be given in Section \ref{Proofs}, as it is rather technical. For the next result, we rely on notions of stochastic calculus introduced in Chapters I.3 and II.2 in~\cite{JacodShiryaev}.
\begin{proposition}\label{PropCompensator}
The collection of counting processes $\{(\rL_t(n,\scK),t \in [0,\rT));\scK \subset [n],\#\scK\geq 2\}$ is a pure-jump $d_n$-dimensional semimartingale on $[0,\rT)$ with respect to $\cal F$. Its predictable compensator is the $d_n$-dimensional process
\begin{equation}
\Big\{\Big(\int_{0}^{t}\lambda_{n,\#\scK}(\rZ_{s-},\Psi)ds, t \in [0,\rT)\Big);\scK \subset [n],\#\scK\geq 2\Big\}
\end{equation}
\end{proposition}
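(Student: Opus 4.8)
The plan is to split the counting processes according to the two independent sources of points that make up $\cal P$ and to compute the $\cal F$-compensator of each separately. Write $L_t(n,K)=L^\sigma_t(n,K)+L^\nu_t(n,K)$, where the two summands count the points of ${\cal N}_\sigma$ and of ${\cal N}_\nu$, respectively, whose restriction to $[n]$ equals $\pi_K^{[n]}$. Since every atom of $\mu^{binary}$ has the form $\pi_{\{i,j\}}$ and its restriction $\pi_{\{i,j\}}^{[n]}$ has non-singleton block $\{i,j\}\cap[n]$, the binary source can produce $\pi_K^{[n]}$ only when $K=\{i,j\}$ with $\#K=2$, and then the unique relevant atom is $\pi_K$ itself. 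Before computing anything I would record the two elementary facts on which everything rests: (a) being a $\Psi$-CSBP, $Z$ has a jump measure whose $\cal F$-predictable compensator on $[0,T)$ is $Z_{s-}\,\nu(dh)\,ds$ — this follows from the Lamperti representation of $Z$ as a time-changed spectrally positive L\'evy process with L\'evy measure $\nu$, see~\cite{Lamperti67}; and (b) the paint-box kernel assigns to a jump of rescaled size $x$ the partition $\pi_K^{[n]}$ with probability $x^{\#K}(1-x)^{n-\#K}$, read off directly from the definition of $\mathscr{P}(x,0,\ldots)$ by demanding $U_i<x$ exactly for the indices $i\in K$ among $i\in[n]$.

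For the binary part, ${\cal N}_\sigma$ is by construction a doubly stochastic Poisson process directed by $\mathbf{1}_{\{t<T\}}\frac{\sigma^2}{Z_{t-}}dt\otimes\mu^{binary}$, whose density is left-continuous in $t$ and hence $\cal F$-predictable. By the theory of Cox processes directed by a predictable intensity (Chapter 12 in~\cite{KallenbergBook}), the $\cal F$-compensator of ${\cal N}_\sigma$ is this directing measure itself. Projecting onto the single atom $\pi_K$ with $\#K=2$ gives $\int_0^t\frac{\sigma^2}{Z_{s-}}\,ds$, which is precisely the $\delta_0$-part of $\int_0^t\lambda_{n,2}(Z_{s-},\Psi)\,ds$; for $\#K>2$ the $\delta_0$-part of $\lambda_{n,\#K}$ vanishes, matching $L^\sigma(n,K)\equiv 0$.

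For the jump part I would combine fact (a) with the $\mathscr{P}$-randomization. Pushing the compensator $Z_{s-}\nu(dh)\,ds$ forward through $h\mapsto\phi_{Z_{s-}}(h)=\frac{h}{h+Z_{s-}}$, the map sending a jump size $h$ to the rescaled jump $\frac{\Delta Z_s}{Z_s}$, yields $Z_{s-}(\nu\circ\phi_{Z_{s-}}^{-1})(dx)\,ds$ as the compensator of $\cal Q$. Since ${\cal N}_\nu$ is a $\mathscr{P}$-randomization of $\cal Q$, Kallenberg's randomization result (Chapter 12 in~\cite{KallenbergBook}) identifies the $\cal F$-compensator of ${\cal N}_\nu$ with the composition of this compensator and the kernel $\mathscr{P}$; evaluating that kernel on $\{\pi^{[n]}=\pi_K^{[n]}\}$ through fact (b) gives, for $k=\#K$,
\begin{equation*}
\int_0^t\int_0^1 x^k(1-x)^{n-k}\,Z_{s-}(\nu\circ\phi_{Z_{s-}}^{-1})(dx)\,ds,
\end{equation*}
which is exactly the $\nu$-part of $\int_0^t\lambda_{n,k}(Z_{s-},\Psi)\,ds$. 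Adding the binary and jump contributions produces the announced compensator $\int_0^t\lambda_{n,\#K}(Z_{s-},\Psi)\,ds$.

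Finally I would verify that this is a genuine, locally integrable predictable compensator, so that each $L(n,K)$ minus it is a local martingale and the collection is a pure-jump $d_n$-dimensional semimartingale on $[0,T)$; here I localize along $T(\epsilon)\uparrow T$. Using $x^k(1-x)^{n-k}\le x^2$ for $k\ge2$, the $\nu$-part is dominated by $\int_0^t x^2\,Z_{s-}(\nu\circ\phi_{Z_{s-}}^{-1})(dx)\,ds$, whose expectation up to $T(\epsilon)$ equals $\mathbb{E}[\sum_{s\le t\wedge T(\epsilon):\Delta Z_s>0}(\frac{\Delta Z_s}{Z_s})^2]<\infty$ by Lemma~\ref{LemmaIntegrabilityCSBP}, while the binary part is bounded by $\sigma^2 t/\epsilon$ since $Z_{s-}>\epsilon$ on $[0,T(\epsilon)]$; letting $\epsilon\downarrow0$ extends the conclusion to $[0,T)$ and simultaneously confirms that $\cal P$ has finitely many points on each $[0,t]\times\mathscr{P}_n^*$. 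The main obstacle I anticipate is the rigorous treatment of the compensator under the gradually-revealed filtration $\cal F$: one must check that the Cox and randomization descriptions, which are naturally stated conditionally on the whole path of $Z$, deliver the same $\cal F$-predictable compensator, and that the left limit $Z_{s-}$ rather than $Z_s$ is the correct predictable integrand — harmless for Lebesgue integration but essential for the martingale property.
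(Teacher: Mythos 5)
Your proposal is correct and rests on the same two essential ingredients as the paper's proof --- the identification of $Z_{s-}\,\nu\circ\phi_{Z_{s-}}^{-1}(dx)\,ds$ as the predictable compensator of the rescaled jump measure of $Z$, and the paint-box probability $x^{k}(1-x)^{n-k}$ for producing $\pi_K^{[n]}$ --- and your localization along $T(\epsilon)\uparrow T$ with the bound $x^{k}(1-x)^{n-k}\le x^{2}$ and Lemma \ref{LemmaIntegrabilityCSBP} is exactly the paper's. The organization differs: you split $L(n,K)=L^{\sigma}(n,K)+L^{\nu}(n,K)$ by source and compensate each summand by appealing to general Cox-process and randomization theory, whereas the paper writes $L(n,K)$ minus its candidate compensator as $M^{(1)}+M^{(2)}$, where $M^{(1)}$ is the discrepancy between the counting process and the sum over the jumps of $Z$ of the Bernoulli parameters $(\frac{\Delta Z_s}{Z_s})^{k}(1-\frac{\Delta Z_s}{Z_s})^{n-k}$ (together with the Cox term), and $M^{(2)}$ is the discrepancy between that sum and $\int_0^t\int_0^1 x^k(1-x)^{n-k}Z_{s-}\nu\circ\phi_{Z_{s-}}^{-1}(dx)\,ds$, disposed of by Theorem II.1.8 in \cite{JacodShiryaev}. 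The one place where your argument is thinner than the paper's is precisely the step you flag at the end: the claim that the ${\cal F}$-compensator of a $\mathscr{P}$-randomization is the composition of the base compensator with the kernel is not literally a theorem of Chapter 12 in \cite{KallenbergBook}, which concerns distributional identities rather than compensators in a prescribed filtration. The paper supplies exactly this missing piece by hand: conditioning on $(Z_s)$, it identifies the number of occurrences of $\pi_K^{[n]}$ in ${\cal N}_{\nu}$ up to $t\wedge T(\epsilon)$ as a sum of independent Bernoulli variables, computes the conditional expectation, and then establishes the ${\cal F}$-martingale property of $M^{(1)}$ via the strong Markov property applied at $r\wedge T(\epsilon)$. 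If you carry out that conditional-expectation and Markov-property verification in place of the cited randomization principle, your proof is complete and equivalent to the paper's.
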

\begin{proof}
Fix $\scK \subset [n]$ such that its cardinality, denoted by $k := \#\scK$, is greater than $2$. It is straightforward to check that $(\rL_t(n,\scK),t \in [0,\rT))$ is a counting process adapted to the filtration $\cal F$. Similarly, one can easily verify that the process
\begin{equation*}
\Big(\int_{0}^{t}\lambda_{n,k}(\rZ_{s-},\Psi)ds, t \in [0,\rT)\Big)
\end{equation*}
is a predictable increasing process w.r.t. the filtration ${\cal F}$. Let us prove that the process
\begin{equation*}
\Big(\rL_t(n,\scK)-\int_{0}^{t}\lambda_{n,k}(\rZ_{s-},\Psi)ds,t \in [0,\rT)\Big)
\end{equation*}
is a local martingale on $[0,\rT)$ w.r.t. $\cal F$. To do so, set for all $t \in [0,\rT)$
\begin{eqnarray*}
M^{(1)}_t &:=& \rL_t(n,\scK)-\sum_{s \leq t}\Big(\frac{\Delta \rZ_s}{\rZ_s}\Big)^{k}\Big(1-\frac{\Delta \rZ_s}{\rZ_s}\Big)^{n-k}-\mathbf{1}_{\{k=2\}}\int_0^{t}\frac{\sigma^2}{\rZ_{s-}}\,ds\\
M^{(2)}_t &:=& \sum_{s \leq t}\Big(\frac{\Delta \rZ_s}{\rZ_s}\Big)^{k}\Big(1-\frac{\Delta \rZ_s}{\rZ_s}\Big)^{n-k}-\int_{0}^{t}\int_{0}^{1}x^{k}(1-x)^{n-k}\rZ_{s-}\,\nu\circ\phi_{\rZ_{s-}}^{-1}(dx)\,ds
\end{eqnarray*}
It is sufficient to show that both $M^{(1)}$ and $M^{(2)}$ are local martingales on $[0,\rT)$ w.r.t. $\cal F$.\\
Let us focus on the first one. Fix $\epsilon \in (0,1)$ and recall the definition of the stopping time $\rT_\epsilon := \inf\{t \geq 0: \rZ_t \notin (\epsilon, 1/\epsilon)\}$. Condition on $(\rZ_s,s \in [0,\rT))$ and consider a time $s > 0$ such that $\Delta \rZ_s > 0$ (note that those times are countably many a.s.). The $\mathscr{P}$-randomization procedure implies that the restriction of the random partition $\varrho_s$ to $\mathscr{P}_n$ has a probability $(\frac{\Delta \rZ_s}{\rZ_s})^{k}(1-\frac{\Delta \rZ_s}{\rZ_s})^{n-k}$ to be equal to $\tun_\scK^{[n]}$ independently of the other partitions $(\varrho_t)_{\{t\ne s : \Delta \rZ_t > 0\}}$. For all $t \geq 0$, the number of occurrences of the partition $\tun_\scK^{[n]}$ in ${\cal N}_{\nu}$ restricted to $[0,t\wedge \rT_\epsilon]\times\mathscr{P}_{n}$ is given by the following r.v.
\begin{equation}\label{EqNbOccurences}
\#\big\{s\in [0,t\wedge \rT_\epsilon]:(s,\tun_\scK^{[n]}) \in {\cal N}_{\nu}^{[n]}\big\}
\end{equation}
which is, therefore, distributed as the sum of a sequence, indexed by $\{s \in [0,t\wedge \rT_\epsilon]:\Delta \rZ_s >0\}$, of independent Bernoulli r.v. with parameters $((\frac{\Delta \rZ_s}{\rZ_s})^{k}(1-\frac{\Delta \rZ_s}{\rZ_s})^{n-k})$. Since
\begin{equation}\label{EqInequalityJumps}
\Big(\frac{\Delta \rZ_s}{\rZ_s}\Big)^{k}\Big(1-\frac{\Delta \rZ_s}{\rZ_s}\Big)^{n-k} \leq \Big(\frac{\Delta \rZ_s}{\rZ_s}\Big)^{2}
\end{equation}
a simple application of Borel-Cantelli lemma together with Lemma \ref{LemmaIntegrabilityCSBP} ensures that the r.v. of Equation (\ref{EqNbOccurences}) is finite a.s. One also easily deduces that for all $t \geq 0$
\begin{equation*}
\mathbb{E}\left[\#\big\{s\in [0,t\wedge \rT_\epsilon]:(s,\tun_\scK^{[n]}) \in {\cal N}_{\nu}^{[n]}\big\}\, \Big|\, \big(\rZ_s,s\in[0,t\wedge \rT_\epsilon]\big)\right] = \sum_{s \leq t\wedge \rT_\epsilon}\Big(\frac{\Delta \rZ_s}{\rZ_s}\Big)^{k}\Big(1-\frac{\Delta \rZ_s}{\rZ_s}\Big)^{n-k}
\end{equation*}
Furthermore when $k=2$, we deduce from the definition of ${\cal N}_{\sigma}$ that the counting process
\begin{equation*}
\#\big\{s\in [0,t]:(s,\tun_{\scK}^{[n]}) \in {\cal N}_{\sigma}^{[n]}\big\},\; t \in [0,\rT)
\end{equation*}
is a doubly stochastic Poisson process with intensity $\mathbf{1}_{\{t<\rT\}}\frac{\sigma^2}{\rZ_{t}}dt$. Therefore, for all $t \geq 0$
\begin{equation*}
\mathbb{E}\left[\#\big\{s\in [0,t\wedge \rT_\epsilon]:(s,\tun_{\scK}^{[n]}) \in {\cal N}_{\sigma}^{[n]}\big\}\, \Big|\,\big(\rZ_s,s\in[0,t\wedge \rT_\epsilon]\big)\right] = \int_0^{t\wedge \rT_\epsilon}\frac{\sigma^2}{\rZ_{s}}\,ds
\end{equation*}
Notice that the r.h.s. is finite a.s. Putting together the preceding results, we get that
\begin{equation*}
\mathbb{E}\Big[\rL_{t\wedge \rT_\epsilon}(n,\scK)\, \big|\, \big(\rZ_s,s\in[0,t\wedge \rT_\epsilon]\big)\Big] = \sum_{s \leq t\wedge \rT_\epsilon}\Big(\frac{\Delta \rZ_s}{\rZ_s}\Big)^{k}\Big(1-\frac{\Delta \rZ_s}{\rZ_s}\Big)^{n-k} + \mathbf{1}_{\{k=2\}}\int_0^{t\wedge \rT_\epsilon}\frac{\sigma^2}{\rZ_{s}}\,ds
\end{equation*}
Using Lemma \ref{LemmaIntegrabilityCSBP} and Equation (\ref{EqInequalityJumps}), we deduce that the r.h.s. of the preceding equation is integrable for all $t\geq 0$. Therefore
\begin{equation*}
\mathbb{E}\big[M^{(1)}_{t\wedge \rT_\epsilon}\big] = 0
\end{equation*}
Note that the integrability is indeed locally uniform since we deal with non-decreasing processes. In addition, we have for all $0 \leq r \leq t$,
\begin{eqnarray*}
\mathbb{E}\big[M^{(1)}_{t\wedge \rT_\epsilon}\, \big|\, {\cal F}_r\big] &=& M^{(1)}_{r\wedge \rT_\epsilon}+ \mathbb{E}\Big[\rL_{r\wedge \rT_\epsilon,t\wedge \rT_\epsilon}(n,\scK)\\
&-&\sum_{s \in (r\wedge \rT_\epsilon, t\wedge \rT_\epsilon]}\Big(\frac{\Delta \rZ_s}{\rZ_s}\Big)^{k}\Big(1-\frac{\Delta \rZ_s}{\rZ_s}\Big)^{n-k} - \mathbf{1}_{\{k=2\}}\int_{r\wedge \rT_\epsilon}^{t\wedge \rT_\epsilon}\frac{\sigma^2}{\rZ_{s}}\,ds\, \big|\,{\cal F}_{r}\Big]
\end{eqnarray*}
By applying the strong Markov property at time $r\wedge \rT_\epsilon$ to the process $\rZ$, one easily gets that the second term in the r.h.s.~is zero a.s.~using the preceding arguments. Therefore, we have proven that $(M^{(1)}_{t\wedge \rT_\epsilon},t \in [0,\rT))$ is a locally uniformly integrable martingale. Since $\rT_\epsilon \uparrow \rT$ a.s., it implies that $M^{(1)}$ is a local martingale on $[0,\rT)$.\\

We turn our attention to $M^{(2)}$. It is well-known that the dual predictable compensator of the random measure
\begin{equation}\label{EqJumpMeasure}
\sum_{\{t\geq 0: \Delta \rZ_t > 0\}}\delta_{(t,\frac{\Delta \rZ_t}{\rZ_t})}
\end{equation}
is the random measure $\mathbf{1}_{\{t<T\}}\rZ_{t-}dt\otimes\nu\circ\phi_{\rZ_{t-}}^{-1}(dx)$ on $\mathbb{R}_+\times[0,1]$. Thus Th.II.1.8 in~\cite{JacodShiryaev} ensures that $M^{(2)}$ is a local martingale on $[0,\rT)$. Indeed, it suffices to take $W(\omega,t,x):=\mathbf{1}_{\{t < \rT\}}x^k(1-x)^{n-k}$ and to apply the theorem to the random measure of Equation (\ref{EqJumpMeasure}).\\
We have proved that both $M^{(1)}$ and $M^{(2)}$ are local martingales w.r.t. ${\cal F}$, this implies that the process
\begin{equation*}
\Big(\rL_t(n,\scK)-\int_{0}^{t}\lambda_{n,k}(\rZ_{s-},\Psi)ds,\, t \in [0,\rT)\Big)
\end{equation*}
is a local martingale on $[0,\rT)$ w.r.t. $\cal F$.\\
Finally, consider the vector formed by the $d_n$ counting processes. Since we have identified for each of them their compensator in a same filtration $\cal F$, we have identified the compensator of the vector. The proposition is proved.\cqfd
\end{proof}

\section{The Eve property}
Throughout this section, $\rmm$ designates a c\`adl\`ag $\Psi$-MVBP started from the Lebesgue measure on $[0,1]$, $\rZ$ denotes its total-mass process and $\rT$ its lifetime. In the first subsection, we define the Eve property and prove Theorem \ref{TheoremEquivalence}. In the second subsection, we identify a complete sequence of Eves and prove Theorem \ref{ThEves}. Some properties of the Eves are given in the third subsection.
\subsection{Definition}
Recall the definition given in the introduction.\vspace{8pt}\\
\textbf{Definition \ref{DefEveProperty}} \textit{
We say that the branching mechanism $\Psi$ satisfies the Eve property if and only if there exists a random variable $\rme$ in $[0,1]$ such that
\begin{equation}
\frac{\rmm_{t}(dx)}{\rmm_{t}([0,1])} \underset{t \uparrow \rT}{\longrightarrow} \delta_{\rme}(dx)\mbox{ a.s.}
\end{equation}
in the sense of weak convergence of probability measures. The r.v. $\rme$ is called the primitive Eve of the population.}

\begin{lemma}
Suppose that the Eve property is verified. Then $\rme$ is uniform$[0,1]$.
\end{lemma}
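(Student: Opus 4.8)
The plan is to compute the distribution function $F(x):=\mathbb{P}(\mathbf{e}\le x)$ and to show $F(x)=x$ on a dense set, then conclude by continuity. The engine is the branching property of the $\Psi$-MVBP recalled in Subsection~\ref{SubsectionMVBP}, combined with the symmetry produced by an equal subdivision of $[0,1]$.

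\textbf{Subdivision and symmetry.} First I would fix $n\ge 1$ and partition $[0,1]$ into the $n$ consecutive intervals $I_1=[0,1/n]$ and $I_k=((k-1)/n,k/n]$ for $2\le k\le n$. By the branching property, the processes $(m_t(I_k),t\ge 0)$, $1\le k\le n$, are i.i.d.\ $\Psi$-CSBP started from $1/n$, and $Z_t=\sum_{k=1}^n m_t(I_k)$. Writing $\mu_t:=m_t/Z_t$ and $A_k:=\{\lim_{t\uparrow T}\mu_t(I_k)=1\}$, the events $A_k$ are pairwise disjoint (two ratios cannot both tend to $1$), and permuting the i.i.d.\ coordinates $1\leftrightarrow k$ carries $A_1$ onto $A_k$ while preserving the joint law; hence $\mathbb{P}(A_1)=\cdots=\mathbb{P}(A_n)$.

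\textbf{Locating the Eve.} The Eve property gives $\mu_t\Rightarrow\delta_{\mathbf{e}}$ a.s.\ as $t\uparrow T$. Applying the portmanteau theorem to the open set $I_k^{\circ}$ and to the closed set $\overline{I_k}$ yields the sandwich $\{\mathbf{e}\in I_k^{\circ}\}\subseteq A_k\subseteq\{\mathbf{e}\in\overline{I_k}\}$, so $A_k$ and $\{\mathbf{e}\in I_k\}$ differ only by the event that $\mathbf{e}$ lands on a partition point $j/n$. To close the argument I would then show that $\mathbf{e}$ has no atoms: for fixed $t$ the map $x\mapsto m_t([0,x])$ is a subordinator, and a L\'evy process is continuous in probability, so $\mathbb{P}(m_t(\{c\})>0)=0$ for every fixed $c$; since $\mathbf{e}=c$ forces $\mu_t(\{c\})\to 1$ and hence $m_t(\{c\})>0$ on a left-neighbourhood of $T$ (in particular for some rational $t<T$), a countable union over rationals gives $\mathbb{P}(\mathbf{e}=c)=0$ for every $c\in[0,1]$.

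\textbf{Conclusion.} With the partition points now $\mathbf{e}$-null, the sandwich yields $\mathbb{P}(\mathbf{e}\in I_k)=\mathbb{P}(A_k)$, which is independent of $k$ by the symmetry step; since $\sum_k\mathbb{P}(\mathbf{e}\in I_k)=1$, each equals $1/n$, whence $F(k/n)=k/n$ for all $0\le k\le n$. As $F$ is non-decreasing and, by the absence of atoms, continuous, and it agrees with the identity on the dense set $\{k/n:n\ge 1,\,0\le k\le n\}$, I conclude $F(x)=x$ for all $x$, i.e.\ $\mathbf{e}$ is uniform$[0,1]$. The hard part will be the no-atoms step together with the careful treatment of $\mathbf{e}$ falling exactly on a subdivision point: the exchangeability in the first step is immediate, but it only becomes usable once these boundary events are shown to be negligible.
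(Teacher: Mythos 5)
Your overall strategy (equal subdivision, i.i.d.\ restrictions via the branching property, exchangeability of the events $A_k$, then identification of the distribution function on the dyadic-type grid) is sound and is genuinely different from the paper's argument, which simply observes that for any Lebesgue-measure-preserving bijection $f$ the pushforward $m_t\circ f^{-1}$ is again a $\Psi$-MVBP with Eve $f(\mathbf{e})$, so that the law of $\mathbf{e}$ is invariant under all such bijections and must be uniform. However, your proof as written has a genuine gap exactly where you anticipated one: the no-atoms step. You claim that $\mathbf{e}=c$ forces $\mu_t(\{c\})\to 1$. This does not follow from the Eve property, which only asserts weak convergence $\mu_t\Rightarrow\delta_{\mathbf{e}}$: the portmanteau inequality for the closed set $\{c\}$ reads $\limsup_t\mu_t(\{c\})\le\delta_c(\{c\})=1$ and is vacuous, while the useful lower bound would require $\{c\}$ to be open. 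A sequence such as $\mu_t=\delta_{c+1/t}$ converges weakly to $\delta_c$ with $\mu_t(\{c\})=0$ for every $t$, so nothing in the definition guarantees that $\mathbf{e}$ is ever an atom of $m_t$ (in this model it in fact is, eventually, but proving that requires the lookdown machinery of Section 3 and cannot be invoked for a lemma stated directly after the definition). Consequently the subordinator continuity-in-probability argument never engages, and both the treatment of the boundary events $\{\mathbf{e}=j/n\}$ and the continuity of $F$ are left unproved.

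The good news is that the gap is repairable entirely within your own framework, without any appeal to the subordinator structure. From your sandwich $\{\mathbf{e}\in I_k^{\circ}\}\subseteq A_k$ together with the disjointness and exchangeability of the $A_k$ you already have $\mathbb{P}(\mathbf{e}\in I_k^{\circ})\le\mathbb{P}(A_k)\le 1/n$ for every $k$ and every $n$. For a fixed $c\in(0,1)$ there are infinitely many $n$ for which $c$ is not of the form $j/n$ (all $n$ if $c$ is irrational; all $n$ coprime with the denominator if $c$ is rational), and for such $n$ the point $c$ lies in some $I_k^{\circ}$, whence $\mathbb{P}(\mathbf{e}=c)\le 1/n$; letting $n\to\infty$ gives $\mathbb{P}(\mathbf{e}=c)=0$. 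The endpoints $0$ and $1$ are handled the same way using cells that are relatively open in $[0,1]$, such as $[0,1/n)$, to which the portmanteau theorem applies. With atoms ruled out this way, your boundary events become negligible, $F$ is continuous, and the rest of your conclusion goes through as written. With that repair your proof is correct and, compared to the paper's, trades brevity for the advantage of avoiding the (slightly delicate) claim that weak convergence of $\mu_t$ to a Dirac mass is preserved under pushforward by a general measure-preserving bijection.
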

\begin{proof}
Let us consider a bijection $f$ from $[0,1]$ to $[0,1]$ that preserves the Lebesgue measure. For all $t \geq 0$, we denote by $\rmm_t\circ f^{-1}$ the pushforward measure of $\rmm_t$ by the function $f$. The process $(\rmm_t\circ f^{-1}, t \in [0,\rT))$ is still a $\Psi$-MVBP whose lifetime is $\rT$. Thus there exists a r.v. $\rme' \in [0,1]$ such that
\begin{equation}
\frac{\rmm_{t}\circ f^{-1}(dx)}{\rmm_{t}([0,1])} \underset{t \rightarrow \rT}{\longrightarrow} \delta_{\rme'}(dx)\mbox{ a.s.}
\end{equation}
Moreover, it is immediate to check that $\rme' := f(\rme)$ and that $\rme'$ and $\rme$ have the same distribution. We deduce that $\rme$ is a r.v. on $[0,1]$ whose distribution is invariant under bijections that preserve the Lebesgue measure. Hence it is a uniform$[0,1]$ r.v.\cqfd
\end{proof}
The following proposition specifies an important case where the Eve property is fulfilled.
\begin{proposition}\label{PropEveTFinite}
If $\rT < \infty$ a.s. then $\Psi$ satisfies the Eve property.
\end{proposition}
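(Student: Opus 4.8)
The plan is to use the branching property to reduce the statement to the behaviour of the mass ratio of two complementary subpopulations, and then to reconstruct the Eve by a nested‑interval argument. Fix $x\in(0,1)$ and, invoking the branching property of the $\Psi$-MVBP, write $Z^{(1)}_t:=m_t([0,x])$ and $Z^{(2)}_t:=m_t((x,1])$, which are two \emph{independent} $\Psi$-CSBPs started from $x$ and $1-x$ respectively, with $Z_t=Z^{(1)}_t+Z^{(2)}_t$. Set $R^x_t:=\bar m_t([0,x])=Z^{(1)}_t/Z_t$, where $\bar m_t:=m_t/Z_t$. The heart of the argument is the claim that, under the hypothesis $T<\infty$ a.s., the ratio $R^x_t$ converges a.s. to a $\{0,1\}$-valued limit $L(x)$ as $t\uparrow T$; intuitively, the surviving (or exploding) side captures all the mass near the end of the lifetime.

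To establish this claim I would first recall the dichotomy stated after Grey's classification: on $\{Z_T=0\}$ one has $T=T_0<\infty$ a.s., and on $\{Z_T=\infty\}$ one has $T=T_\infty<\infty$ a.s. Since finite‑time extinction and finite‑time explosion are properties of $\Psi$ alone (they do not depend on the starting mass), each side $Z^{(i)}$ inherits a finite lifetime $T^{(i)}:=T_0^{(i)}\wedge T_\infty^{(i)}$ on the relevant event. By Lemma~\ref{LemmaDistributionAbsContinuous} the law of $T^{(i)}$ is non-atomic on $\{T^{(i)}<\infty\}$, and, the two sides being independent, a.s. $T^{(1)}\neq T^{(2)}$. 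Then I distinguish the behaviour near $T$: if at least one side explodes, the first one to explode, say side $i$, forces $Z^{(i)}_t\to+\infty$ (an exploding CSBP satisfies $\lim_{t\uparrow T_\infty}Z_t=+\infty$), while the other side stays bounded because its lifetime is strictly larger, so $R^x_t\to\mathbf 1_{\{i=1\}}$; if no side explodes, then both go extinct in finite time and the first one to die out, say side $j$, satisfies $Z^{(j)}_t=0$ on a left neighbourhood of $T$, whence $R^x_t\to\mathbf 1_{\{j=2\}}$. In every regime $R^x_t$ converges to some $L(x)\in\{0,1\}$.

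Finally I assemble the point $\mathbf e$. Let $D$ be the countable set of dyadic points of $(0,1)$; on the almost sure event where $R^x_t\to L(x)$ holds simultaneously for all $x\in D$, the monotonicity of $x\mapsto R^x_t$ passes to the limit, so $L$ is a non-decreasing $\{0,1\}$-valued function on $D$. Define $\mathbf e:=\inf\{x\in D:L(x)=1\}\in[0,1]$. For any $y\neq\mathbf e$ one inserts a dyadic point strictly between $y$ and $\mathbf e$ and uses the monotonicity of $R^{\cdot}_t$ together with the dyadic convergence to deduce $\bar m_t([0,y])\to\mathbf 1_{\{y\ge\mathbf e\}}$. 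Since this is precisely convergence of the distribution function of $\bar m_t$ to that of $\delta_{\mathbf e}$ at every continuity point, it yields $\bar m_t\Rightarrow\delta_{\mathbf e}$ weakly a.s., which is the Eve property of Definition~\ref{DefEveProperty}.

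I expect the main obstacle to be the per-$x$ dichotomy, particularly in the explosive and mixed regimes: one must rule out that the two complementary subpopulations remain comparable as $t\uparrow T$. This is exactly where the two inputs are indispensable, namely the non-atomicity of the lifetimes from Lemma~\ref{LemmaDistributionAbsContinuous} (which gives $T^{(1)}\neq T^{(2)}$ a.s., so that a unique side is responsible for the terminal behaviour) and the fact that an exploding CSBP genuinely tends to $+\infty$ while a side with strictly larger lifetime stays bounded. Once the fixed-$x$ limit is secured, the passage from dyadic points to full weak convergence is a routine monotonicity argument.
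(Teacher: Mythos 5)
Your argument is correct and is essentially the paper's own proof: both rest on the branching property to split $[0,1]$ into independent $\Psi$-CSBPs, on Lemma \ref{LemmaDistributionAbsContinuous} to guarantee that the lifetimes of the pieces are a.s. distinct (so that a single piece dominates at time $T$, whether by last extinction or first explosion), and on a nested-interval/monotonicity argument to extract the point $\mathbf{e}$. The only difference is cosmetic — you use binary splits at dyadic points where the paper uses $n$ equal subintervals and lets $n\to\infty$ — and your treatment of the per-interval $\{0,1\}$ dichotomy is, if anything, slightly more detailed than the paper's.
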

\begin{proof}
Suppose that $\rT < \infty$ a.s. The branching property fulfilled by the process $(\rmm_t, t \geq 0)$ ensures that
\begin{equation*}
\big(\rmm_t([0,2^{-n})),t \in [0,\rT)\big);\,\big(\rmm_t([2^{-n},2\times 2^{-n})),t \in [0,\rT)\big);\ldots;\,\big(\rmm_t([1-2^{-n},1]),t \in [0,\rT)\big)
\end{equation*}
are $2^n$ i.i.d. $\Psi$-CSBP started from $2^{-n}$ and stopped at the infimum of their lifetimes. Since the lifetimes of these CSBP are independent and finite a.s., we deduce from Lemma \ref{LemmaDistributionAbsContinuous} that they are distinct a.s. and that $\rT$ is either the first explosion time or the last extinction time of the preceding collection. Therefore, for all $i\in [2^n]$,
\begin{equation*}
\lim\limits_{t \rightarrow \rT}\frac{\rmm_t([(i-1)2^{-n},i2^{-n}))}{\rmm_t([0,1])} \in \{0,1\}\mbox{ a.s.}
\end{equation*}
This implies that there exists a unique (random) integer $u_n \in [2^n]$ such that
\begin{equation*}
\lim\limits_{t \rightarrow \rT}\frac{\rmm_t([(u_n-1)2^{-n},u_n2^{-n}))}{\rmm_t([0,1])} = 1\mbox{ a.s.}
\end{equation*}
This holds for all $n\in \mathbb{N}$ and obviously $[(u_n-1)2^{-n},u_n2^{-n}) \supset [(u_{n+1}-1)2^{-(n+1)},u_{n+1}2^{-(n+1)})$. We can therefore introduce the following random variable
\begin{equation*}
\rme := \inf\limits_{n\in\bbN}u_n2^{-n}
\end{equation*}
We have proved that
\begin{equation}
\frac{\rmm_{t}(dx)}{\rmm_{t}([0,1])} \underset{t \rightarrow \rT}{\longrightarrow} \delta_{\rme}(dx)\mbox{ a.s.}
\end{equation}
in the sense of weak convergence of probability measures.\cqfd
\end{proof}
\begin{remark}
A complete classification of the asymptotic behaviour of $\frac{\rmm_t(.)}{\rmm_t([0,1])}$ will be established in a forthcoming work~\cite{DuquesneLabbe13}. In particular it will be shown that whenever the CSBP is supercritical, the Eve property is fulfilled if and only if the mean is infinite: an intuitive argument for this result is that two independent copies of a same CSBP have comparable asymptotic sizes iff the mean is finite.
\end{remark}
We now present a result that relates the Eve property with the behaviour of the $\Psi$ flow of partitions at the end of its lifetime $\rT$. In addition, this result provides a necessary and sufficient condition on $\rZ$ for the Eve property to hold.\vspace{8pt}\\
\textbf{Theorem \ref{TheoremEquivalence}}\textit{
There exists an exchangeable partition $\hat{\Pi}_{\rT}$ such that $\hat{\Pi}_{t} \rightarrow \hat{\Pi}_{\rT}$ almost surely as $t \uparrow \rT$. Moreover, these three assumptions are equivalent\begin{enumerate}[i)]
\item\label{EquivalencePartitions3} $\Psi$ satisfies the Eve property.
\item\label{EquivalencePartitions2} $\hat{\Pi}_{\rT} = \tun_{[\infty]}$ a.s.
\item\label{EquivalencePartitions1} $\displaystyle\sum_{\{s < \rT : \Delta \rZ_s > 0\}}\Big(\frac{\Delta \rZ_s}{\rZ_s}\Big)^2+\int_{0}^{\rT}\frac{\sigma^2}{\rZ_s}ds = \infty$ a.s.
\end{enumerate}}
\begin{remark}
This theorem should be compared with Theorem 6.1 in~\cite{DK99} where a similar condition on the total-mass process is given but for a much larger class of measure-valued processes. However their result is proved only when $\rT < \infty$, which in our particular case of branching processes, is a trivial case as we already know from Proposition \ref{PropEveTFinite} that the Eve property is fulfilled.
\end{remark}
\begin{proof}
To prove the asserted convergence, it suffices to show that for each $n\in\mathbb{N}$, the restriction $\hat{\Pi}^{[n]}_t$ of $\hat{\Pi}_t$ to $\mathscr{P}_n$ admits a limit when $t \uparrow \rT$ almost surely. We fix $n\in\mathbb{N}$ until the end of the proof.\vspace{2pt}\\
\textit{Step 1.} Remark that conditional on $\int_{0}^{\rT}\frac{\sigma^2}{\rZ_s}ds$, the r.v. $\#\{{\cal N}_{\sigma|[0,\rT)\times\mathscr{P}^*_n}\}$ has a Poisson distribution with parameter $\binom{n}{2}\int_{0}^{\rT}\frac{\sigma^2}{\rZ_s}ds$. Thus we have almost surely
\begin{equation}\label{EqEquivalence1}
\mathbb{P}\Big(\,\#\{{\cal N}_{\sigma|[0,\rT)\times\mathscr{P}^*_n}\}=0\, \big| \,\int_{0}^{\rT}\frac{\sigma^2}{\rZ_s}ds\,\Big) = \exp\Big(-\binom{n}{2}\int_{0}^{\rT}\frac{\sigma^2}{\rZ_s}\,ds\Big)
\end{equation}
In addition, thanks to Borel-Cantelli lemma we notice that
\begin{eqnarray}\label{EqEquivalence2}
&&\mathbb{P}\Big(\#\big\{{\cal N}_{\nu|[0,\rT)\times\mathscr{P}^*_n}\big\}=0 \, \big|\,\displaystyle\sum_{s < \rT}\Big(\frac{\Delta \rZ_s}{\rZ_s}\Big)^2= \infty\Big)\nonumber \leq\\
&&1-\mathbb{P}\Big(\#\big\{{\cal N}_{\nu|[0,\rT)\times\mathscr{P}^*_n}\big\}=\infty\, \big|\, \displaystyle\sum_{s < \rT}\Big(\frac{\Delta \rZ_s}{\rZ_s}\Big)^2 = \infty\Big) = 0
\end{eqnarray}
\textit{Step 2.} Introduce $t_{i}:= \inf\{t \geq 0: \hat{\Pi}^{[i]}_{t} = \tun_{[i]}\}$ for all $i\in\mathbb{N}$, we first prove that $t_n < \rT$ conditional on $\{\sum_{s < \rT}(\frac{\Delta \rZ_s}{\rZ_s})^2+\int_{0}^{\rT}\frac{\sigma^2}{\rZ_s}ds = \infty\}$, thus it will imply that $\hat{\Pi}_{t}^{[n]} \rightarrow \tun_{[n]}$ as $t \uparrow \rT$ on the same event, and also the implication \ref{EquivalencePartitions1}) $\Rightarrow$ \ref{EquivalencePartitions2}). We proceed via a recursion. At rank $i=2$, we use Equations (\ref{EqEquivalence1}) and (\ref{EqEquivalence2}) to obtain
\begin{equation*}
\mathbb{P}\Big(\#\{{\cal P}_{|[0,\rT)\times\mathscr{P}_2^*}\}=0\,\big|\,\sum_{s < \rT}\Big(\frac{\Delta \rZ_s}{\rZ_s}\Big)^2+\int_{0}^{\rT}\frac{\sigma^2}{\rZ_s}ds = \infty\Big) = 0
\end{equation*}
Hence $t_{2} < \rT$ a.s. Suppose that $t_{i-1} < \rT$ almost surely for a given integer $i \geq 3$, then we have $\hat{\Pi}^{[i-1]}_{t_{i-1}} = \tun_{[i-1]}$ a.s. Thus either $t_{i} = t_{i-1}$ and the recursion is complete, or $\hat{\Pi}^{[i]}_{t_{i-1}} = \{1,\ldots,i-1\},\{i\}$. In the latter case, we need to prove that on $[t_{i-1},\rT)$ there will be a reproduction event involving an integer in $[i-1]$ and the integer $i$. We denote by $A_i$ the subset of $\mathscr{P}_{i}^*$ whose elements are partitions with a non-singleton block containing an integer lower than $i-1$ and the integer $i$. Remark that on the event $\{\sum_{s < \rT}(\frac{\Delta \rZ_s}{\rZ_s})^2+\int_{0}^{\rT}\frac{\sigma^2}{\rZ_s}ds = \infty\}$
\begin{equation*}
\displaystyle\sum_{t_{i-1} < s < \rT}\Big(\frac{\Delta \rZ_s}{\rZ_s}\Big)^2+\int_{t_{i-1}}^{\rT}\frac{\sigma^2}{\rZ_s}\,ds = \infty\;\;\mbox{a.s.}
\end{equation*}
so that almost surely
\begin{eqnarray*}
\mathbb{P}\Big(\#\{{\cal N}_{\sigma|(t_{i-1},\rT)\times A_i}\}=0\, \big|\, \int_{0}^{\rT}\frac{\sigma^2}{\rZ_s}\,ds = \infty\Big) = 0 \\
\mathbb{P}\Big(\#\{{\cal N}_{\nu|(t_{i-1},\rT)\times A_i}\} = 0\, \big|\, \displaystyle\sum_{s < \rT}\Big(\frac{\Delta \rZ_s}{\rZ_s}\Big)^2 = \infty\Big) = 0
\end{eqnarray*}
which in turn ensures that $t_i < \rT$ a.s. The recursion is complete.\vspace{2pt}\\
\textit{Step 3.} We now prove that conditional on $\{\sum_{s<\rT}\big(\frac{\Delta \rZ_s}{\rZ_s}\big)^2+\int_{0}^{\rT}\frac{\sigma^2}{\rZ_s}ds < \infty\}$, the number of reproduction events $\#\{{\cal P}_{|[0,\rT)\times\mathscr{P}_{n}^*}\}$ is finite. This will imply that $\hat{\Pi}^{[n]}_t$ admits a limit as $t \uparrow \rT$ on the same event.\\
Thanks to the remark preceding (\ref{EqEquivalence1}), we deduce that on $\{\int_{0}^{\rT}\frac{\sigma^2}{\rZ_s}ds < \infty\}$, the r.v. $\#\{{\cal N}_{\sigma|[0,\rT)\times\mathscr{P}^*_n}\}$ is finite. In addition, for each $s \geq 0$ such that $\Delta \rZ_s > 0$, the probability that the restriction of $\varrho_s$ to $\mathscr{P}_n$ differs from $0_{[n]}$ is equal to
\begin{equation*}
\sum_{k=2}^n\binom{n}{k}\Big(\frac{\Delta \rZ_s}{\rZ_s}\Big)^k\Big(1-\frac{\Delta \rZ_s}{\rZ_s}\Big)^{n-k}
\end{equation*}
independently of the other $(\varrho_t)_{t \ne s}$. Since
\begin{equation*}
\displaystyle\sum_{s < \rT}\sum_{k=2}^n\binom{n}{k}\Big(\frac{\Delta \rZ_s}{\rZ_s}\Big)^k\Big(1-\frac{\Delta \rZ_s}{\rZ_s}\Big)^{n-k} \leq \displaystyle\sum_{s < \rT}\sum_{k=2}^n\binom{n}{k}\Big(\frac{\Delta \rZ_s}{\rZ_s}\Big)^2
\end{equation*}
an application of Borel-Cantelli lemma implies that
\begin{equation*}
\mathbb{P}\Big(\#\{{\cal N}_{\nu|[0,\rT)\times\mathscr{P}^*_n}\}<\infty\, \big|\,\displaystyle\sum_{s < \rT}\Big(\frac{\Delta \rZ_s}{\rZ_s}\Big)^2 < \infty\Big) = 1
\end{equation*}
Thus we have proved that $\#\{{\cal P}_{|[0,\rT)\times\mathscr{P}_{n}^*}\}$ is finite on the event $\{\sum_{s < \rT}(\frac{\Delta \rZ_s}{\rZ_s})^2+\int_{0}^{\rT}\frac{\sigma^2}{\rZ_s}ds < \infty\}$.\vspace{2pt}\\
\textit{Step 4.} We now prove that
\begin{equation*}
\mathbb{P}\Big(\,\displaystyle\sum_{s < \rT}\Big(\frac{\Delta \rZ_s}{\rZ_s}\Big)^2+\int_{0}^{\rT}\frac{\sigma^2}{\rZ_s}ds < \infty\Big) > 0 \Rightarrow \mathbb{P}\big(\hat{\Pi}_{\rT} \ne \tun_{[\infty]}\big) > 0
\end{equation*}
this will imply \ref{EquivalencePartitions2}) $\Rightarrow$ \ref{EquivalencePartitions1}).\\
Thanks to Equation (\ref{EqEquivalence1}), we get
\begin{equation*}
\mathbb{P}\Big(\#\{{\cal N}_{\sigma|[0,\rT)\times\mathscr{P}^*_2}\}=0\, \big|\,\int_{0}^{\rT}\frac{\sigma^2}{\rZ_s}\,ds < \infty\Big) > 0
\end{equation*}
Also, note that
\begin{equation*}
\mathbb{P}\Big(\#\{{\cal N}_{\nu|[0,\rT)\times\mathscr{P}^*_2}\}=0\,\big|\,\big\{(\frac{\Delta \rZ_s}{\rZ_s})^2;s < \rT\big\}\Big) = \prod_{s < \rT}\Big(1-\big(\frac{\Delta \rZ_s}{\rZ_s}\big)^2\Big)
\end{equation*}
One can readily prove that the r.h.s. is strictly positive on the event $\{\sum_{s < \rT}(\frac{\Delta \rZ_s}{\rZ_s})^2<\infty\}$. Therefore we have proven that
\begin{equation*}
\mathbb{P}\Big(\hat{\Pi}^{[2]}_\rT = 0_{[2]}\,\big|\,\sum_{s < \rT}\Big(\frac{\Delta \rZ_s}{\rZ_s}\Big)^2+\int_{0}^{\rT}\frac{\sigma^2}{\rZ_s}\,ds < \infty\Big) > 0
\end{equation*}
This inequality ensures the implication \ref{EquivalencePartitions2}) $\Rightarrow$ \ref{EquivalencePartitions1}).\vspace{2pt}\\
\textit{Step 5.} We turn our attention to the proof of \ref{EquivalencePartitions2}) $\Leftrightarrow$ \ref{EquivalencePartitions3}). Consider a sequence $(\upxi_0(i))_{i\geq 1}$ of i.i.d. uniform$[0,1]$ r.v. and let $(\upxi_{t}(i),t \geq 0)_{i\geq 1} := \mathscr{L}_0(\hat{\Pi},(\upxi_0(i))_{i\geq 1})$ be the lookdown process defined from this last sequence and the flow of partitions $\hat{\Pi}$. We know that $(\rZ_t\cdot\Xi_{t}(.),t\geq 0)$ is a $\Psi$-MVBP, where $(\Xi_{t},t\geq 0) := \mathscr{E}_0(\hat{\Pi},(\upxi_0(i))_{i\geq 1})$. Moreover, a.s. for all $t \in [0,\rT)$
\begin{eqnarray*}
\Xi_t\big(\{\upxi_0(1)\}\big) &=& \lim\limits_{n\rightarrow\infty}\frac{1}{n}\sum_{i=0}^{n}\mathbf{1}_{\{\upxi_t(i)=\upxi_0(1)\}}\\
&=& |\hat{\Pi}_t(1)|
\end{eqnarray*}
It is intuitively easy to see that the Eve property is equivalent with the almost sure convergence
\begin{equation*}
\Xi_t\big(\{\upxi_0(1)\}\big) \underset{t \uparrow \rT}{\longrightarrow} 1
\end{equation*}
Roughly speaking, the primitive Eve is necessarily the type $\upxi_0(1)$ in the sequence of initial types of the lookdown representation. For a rigorous proof of this result, see Proposition \ref{PropUnicityInitialTypes}. Then, it is sufficient to show the following equivalence
\begin{equation*}
\Xi_t\big(\{\upxi_0(1)\}\big) \underset{t \uparrow \rT}{\longrightarrow} 1\mbox{ a.s.} \Longleftrightarrow \hat{\Pi}_t \underset{t \uparrow \rT}{\longrightarrow} \tun_{[\infty]}\mbox{ a.s.} 
\end{equation*}
Since $\hat{\Pi}_t$ is, conditionally on $\{t < \rT\}$, an exchangeable random partition, we deduce that for all $n \geq 1$
\begin{equation*}
\mathbb{P}\Big(\hat{\Pi}_t^{[n]}=\tun_{[n]}\, \big| \,|\hat{\Pi}_t(1)|\Big)=|\hat{\Pi}_t(1)|^{n-1}
\end{equation*}
Thus
\begin{equation*}
\Xi_t\big(\{\upxi_0(1)\}\big) \underset{t \uparrow \rT}{\longrightarrow} 1\mbox{ a.s.}\, \Longleftrightarrow\, |\hat{\Pi}_t(1)| \underset{t \uparrow \rT}{\longrightarrow} 1\mbox{ a.s.} \,\Longleftrightarrow\, \forall n\geq 1,\, \hat{\Pi}_\rT^{[n]}=\tun_{[n]}\mbox{ a.s.} 
\end{equation*}
The proof is complete.\cqfd
\end{proof}
Thanks to this theorem, we can set $\hat{\Pi}_t := \hat{\Pi}_\rT$ for all $t \geq \rT$.

\subsection{An ordering of the ancestors}\label{SubsectionAncestors}
Consider the following definition of an ancestor.
\begin{definition}
Fix a point $x \in [0,1]$. If there exists a time $t > 0$ such that $\rmm_t(\{x\}) > 0$, then we say that $x$ is an ancestor of $\rmm$ and $\rmm_t(\{x\})$ is called its progeny at time $t$. We say that an ancestor $x$ becomes extinct at time $d$, if $d := \sup\{t > 0:\rmm_t(\{x\})>0\}$ is finite. If so, $d$ is called its extinction time.
\end{definition}
Thanks to the lookdown representation the set of ancestors is countable almost surely. Indeed, an ancestor is a point of the atomic support of the MVBP at a given time. Since at any time, the atomic support is included in the set of initial types (of the lookdown representation) and since this last set is countable, the result follows.
\begin{remark}\label{RemarkL\'evyTree}
In the infinite variation case, one can identify an ancestor and its progeny with a L\'evy tree among the L\'evy forest that represents the genealogy of a CSBP. For further details on L\'evy trees see~\cite{DLG02,DuquesneWinkel07,LeGallLeJan98}.
\end{remark}
The progeny $\rmm_t(\{x\})$ of an ancestor $x$ has the same possible long-term behaviours as a $\Psi$-CSBP (these behaviours have been recalled in Subsection \ref{SubsectionCSBP}). We thus propose a classification of the $\Psi$-MVBPs according to these possible behaviours; for the moment we do not require the Eve property to be verified. Recall $d_t, w_t$ from Equation (\ref{EquationLaplaceExponent}).\\\\
\textbf{Classification of the behaviours}\begin{itemize}
\item \Extinction. The total-mass process $\rZ$ reaches $0$ in finite time. All the ancestors become extinct in finite time but no two of them simultaneously. At any time $t \in (0,\rT)$, $\rmm_t$ has finitely many atoms, hence the number of ancestors that have not become extinct is finite, and $\rmm_t$ has no continuous part, that is, $d_t = 0$.
\item \Explosion. The total-mass process $\rZ$ reaches $\infty$ in finite time. All the ancestors, except the primitive Eve, have finite progenies at time $\rT$.
\item \InfNo. $\rT=\infty$ and $\Psi$ is either negative or has a second root $q \in [0,\infty)$ and verifies $\int^{\infty}\frac{du}{\Psi(u)} = \infty$. Then no ancestor becomes extinct in finite time, and their progenies reach, in infinite time, $0$ or $\infty$.
\item \InfPos. $\rT=\infty$, $\Psi$ has a second positive root $q > 0$ and $\int^{\infty}\frac{du}{\Psi(u)} < \infty$. The set of ancestors can be subdivided into those, infinitely many, which become extinct in finite time (no two of them simultaneously) and those, finitely many, whose progenies reach $\infty$ in infinite time.
\end{itemize}
Additionally in the last two cases, the number of ancestors whose progenies reach $\infty$ is Poisson with parameter $q$ under the condition that $\Psi$ is conservative.
\begin{remark}
A $\Psi$-MVBP enjoys at most two distinct behaviours: one on the event $\{\rZ_\rT=0\}$ and another on the event $\{\rZ_\rT=\infty\}$.
\end{remark}
\begin{example}
Let us give some examples that illustrate the previous cases
\begin{itemize}
\item $\Psi(u)=u^2$, the $\Psi$-CSBP reaches $0$ in finite time almost surely and so we are in the \Extinction\ case almost surely.
\item $\Psi(u)=-\sqrt{u}$, the $\Psi$-CSBP reaches $+\infty$ in finite time almost surely and so we are in the \Explosion\ case almost surely.
\item $\Psi(u)=u\ln(u)$, this is called the Neveu CSBP: it has an infinite lifetime almost surely and so we are in the \InfNo\ case almost surely. 
\item $\Psi(u)=u\ln(u)+u^2$, this CSBP reaches either $0$ in finite time or $\infty$ in infinite time. On the event $\{\rZ_\rT = \infty\}$ we are in the \InfPos\ case, while on the event $\{\rZ_\rT = 0\}$ we are in the \Extinction\ case.
\end{itemize}
\end{example}
\begin{proof}(\textbf{Classification of the behaviours})
It is plain that these four cases cover all the possible combinations of branching mechanisms and asymptotic behaviours of the total-mass processes.\\
\Extinction\ case. If $d_t > 0$ or $w_t$ is an infinite measure then we have $\mathbb{P}(\rZ_t = 0)= 0$, therefore necessarily $w_t$ is a finite measure and $d_t = 0$. Since each atom of $\rmm_t$ is associated with an ancestor with a positive progeny at time $t$, we deduce that at any time $t> 0$ only finitely many ancestors have not become extinct. Now condition on $\{t < \rT\}$ and consider two ancestors $x_1$ and $x_2$ in $[0,1]$ not yet extinct at time $t$. Their progenies after time $t$ are given by two independent $\Psi$-CSBP $(\rmm_{t+s}(\{x_1\}),s\geq 0)$ and $(\rmm_{t+s}(\{x_2\}),s\geq 0)$. The extinction times of these two ancestors are then distinct a.s. thanks to Lemma \ref{LemmaDistributionAbsContinuous}.\\
\Explosion\ case. Since two independent $\Psi$-CSBP cannot explode at the same finite time thanks to Lemma \ref{LemmaDistributionAbsContinuous}, we deduce that only one ancestor has an infinite progeny at time $\rT$.\\
\emph{Infinite lifetime} cases. The Poisson distribution of the statement can be derived from Lemma 2 in~\cite{ProlificIndividuals}. Let $x\in[0,1]$ be an ancestor. Then, there exists $t > 0$ s.t. $\rmm_t(\{x\}) > 0$. The process $(\rmm_{t+s}(\{x\}),s\geq 0)$ is a $\Psi$-CSBP started from $\rmm_t(\{x\})$, and so, either it reaches $\infty$ in infinite time with probability $1-e^{-\rmm_t(\{x\})q}$ or it reaches $0$ with the complementary probability. In the latter case, it reaches $0$ in finite time if and only if $\int^{\infty}\frac{du}{\Psi(u)} < \infty$.\\
Now consider the case $\int^{\infty}\frac{du}{\Psi(u)} < \infty$. Remark that in that case $d_t = 0$ for all $t > 0$ and that there is no simultaneous extinction (same proof as above). Let us prove that infinitely many ancestors become extinct in finite time. Consider the lookdown representation of the $\Psi$-MVBP: we stress that the set of initial types is exactly equal to the set of ancestors. We have already proved one inclusion at the beginning of this subsection: each ancestor is an initial type. The converse is obtained as follows. Observe first that $\Psi$ is necessarily the Laplace exponent of a L\'evy process with infinite variation paths since otherwise $\int^{\infty}\frac{du}{\Psi(u)} < \infty$ would not hold. Therefore Theorem \ref{ThDust} ensures that the partitions $\hat{\Pi}_{0,t}, t > 0$ have no singleton: each block has a strictly positive asymptotic frequency and therefore each initial types of the lookdown representation has a strictly positive frequency. This implies that each initial type is necessarily an ancestor. As the initial types are infinitely many, so are the ancestors: a Poisson number of them have a progeny that reaches $\infty$ in infinite time, hence infinitely many become extinct in finite time.\cqfd
\end{proof}
\begin{remark}
One should compare the \Extinction\ case with the behaviour of the $\Lambda$ Fleming-Viot processes that come down from infinity. But in that setting, the question of simultaneous loss of ancestral types remains open, see Section \ref{SectionPathwise} or~\cite{Labbe11} for further details.
\end{remark}
\begin{theorem*}\textup{\textbf{\ref{ThEves}}}
Assume that $\rZ$ does not reach $\infty$ in finite time. If the Eve property holds then one can order the ancestors by persistence/predominance. We denote this ordering $(\rme^{i})_{i\geq 1}$ and call these points the Eves. In particular, $\rme^{1}$ is the primitive Eve.
\end{theorem*}
The persistence of an ancestor refers to the extinction time of its progeny (when it reaches $0$ in finite time) while the predominance denotes the asymptotic behaviour of its progeny (when it does not become extinct in finite time). The proof of this theorem is thus split into the \Extinction\ and the \emph{Infinite lifetime} cases. Note that we have excluded the case where the $\Psi$-CSBP is non-conservative for a reason given in Remark \ref{RemarkNonConservative}.

\subsubsection{Extinction case}
One can enumerate the ancestors of $(\rmm_t,t \in [0,\rT))$, say $(\rme^{i})_{i\geq 1}$, in the decreasing order of their extinction times $(d^i)_{i\geq 1}$, that is, $\rT = d^1 > d^2 > d^3 \ldots > 0$. In particular, $\rme^{1}$ is the primitive Eve.

\subsubsection{Infinite lifetime case}
We let $\rme^1$ be the primitive Eve of the population: necessarily it does not become extinct in finite time. Then we use the following result.
\begin{lemma}\label{LemmaRecursiveEves}
In the \InfNo\ case: there exists a sequence $(\rme^i)_{i\geq 2}$ such that for all $i\geq 2$
\begin{equation*}
\lim\limits_{t\rightarrow\infty}\frac{\rmm_t(\{\rme^i\})}{\rmm_t([0,1]\backslash\{\rme^1,\ldots,\rme^{i-1}\})}=1
\end{equation*}
In the \InfPos\ case: let $K$ be the random number of ancestors which never become extinct. There exists a sequence $(\rme^i)_{i\geq 2}$ such that for all $i\in \{2,\ldots,K\}$
\begin{equation*}
\lim\limits_{t\rightarrow\infty}\frac{\rmm_t(\{\rme^i\})}{\rmm_t([0,1]\backslash\{\rme^1,\ldots,\rme^{i-1}\})}=1
\end{equation*}
and $(\rme^i)_{i> K}$ are the remaining ancestors in the decreasing order of their extinction times $(d^i)_{i>K}$, that is, $\rT=\infty > d^{K+1} > d^{K+2} > d^{K+3} \ldots > 0$. 
\end{lemma}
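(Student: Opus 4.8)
The plan is to construct $(\mathbf{e}^i)_{i\ge 2}$ recursively, peeling off at each stage the currently dominant ancestor and applying the Eve property to the population that remains. The engine is that the Eve property is a property of the branching mechanism alone: condition iii) of Theorem \ref{TheoremEquivalence} only involves the total-mass process $Z$, whose almost-sure behaviour near its lifetime is insensitive to the shape of the initial measure, so condition iii) holds simultaneously for the total-mass CSBP started from any value. Consequently, once $\Psi$ satisfies the Eve property, a $\Psi$-MVBP started from \emph{any} finite measure almost surely concentrates, after renormalisation, on a single atom of its support; I would use this for all the residual populations below.

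The heart of the matter is the construction of $\mathbf{e}^2$, the passage to general $i$ being identical after $\mathbf{e}^1,\dots,\mathbf{e}^{i-1}$ have been removed. Fix a time $s>0$ and let $(a_k)_{k\ge1}$ be the ancestors alive at time $s$, i.e.\ the atoms of $m_s$, enumerated in an ${\cal F}_s$-measurable fashion (by decreasing progeny, ties broken by location). For each fixed index $k$ the branching property shows that $(m_t-m_t(\{a_k\})\delta_{a_k},\,t\ge s)$ is a $\Psi$-MVBP started from $m_s-m_s(\{a_k\})\delta_{a_k}$ and independent of the progeny of $a_k$; by the first paragraph this residual process almost surely has a dominant atom. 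Now $\mathbf{e}^1$ coincides with some $a_{k_1}$, and on the event $\{k_1=k\}$ the residual $m_t-m_t(\{\mathbf{e}^1\})\delta_{\mathbf{e}^1}$ is precisely this $k$-th residual MVBP, whose dominant atom I take to be $\mathbf{e}^2$. One finally checks that the atom obtained does not depend on $s$, since the predominant ancestor of a population is determined by its ultimate fate.

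The main obstacle is the measurability gap just used: $\mathbf{e}^1$ is \emph{not} ${\cal F}_s$-measurable, being read off from the $t\uparrow T$ asymptotics, so one cannot split $m_s$ at $\mathbf{e}^1$ directly and invoke the branching property. The way around it is to split only at the ${\cal F}_s$-measurable atoms $a_k$, for each of which the residual is a genuine $\Psi$-MVBP and hence satisfies the Eve property on an event of probability one. An almost-sure property survives the restriction to the (future-measurable) event $\{k_1=k\}$ on which $a_k$ happens to be the primitive Eve, so the dominant atom of the residual is well-defined there; letting $k$ range over the countable partition $(\{k_1=k\})_{k\ge1}$ defines $\mathbf{e}^2$ almost surely. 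Iterating yields $\mathbf{e}^i$ as long as the current residual still possesses a predominant ancestor.

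It remains to reconcile the recursion with the two regimes. In the \textbf{[Infinite lifetime, no extinction of ancestors]} case every residual population keeps growing, a predominant ancestor persists at each stage, and the recursion runs forever, producing $(\mathbf{e}^i)_{i\ge2}$ with the displayed limit for all $i$. In the \textbf{[Infinite lifetime, possible extinction of ancestors]} case only the $K$ prolific ancestors (those whose progeny tends to $\infty$) can ever be predominant, so the recursion delivers $\mathbf{e}^2,\dots,\mathbf{e}^K$ by predominance and then stops: after removing these $K$ atoms the residual has total mass tending to $0$, every remaining progeny becoming extinct in finite time, and I would order $(\mathbf{e}^i)_{i>K}$ by strictly decreasing extinction times exactly as in the Extinction case, these times being almost surely distinct by Lemma \ref{LemmaDistributionAbsContinuous}. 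Throughout, the continuous (dust) part causes no trouble: under the Eve property its asymptotic frequency $\prod_{s\le t}(1-\tfrac{\Delta Z_s}{Z_s})$ tends to $0$ as $t\uparrow T$ — condition iii) forces $\sum \tfrac{\Delta Z_s}{Z_s}=\infty$ — so the renormalised residuals always concentrate on genuine atoms.
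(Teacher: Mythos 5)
Your construction is genuinely different from the paper's. The paper never removes an atom: it subdivides $[0,1]$ into $n$ equal intervals, takes the Eve $e(i,n)$ of each of the $n$ independent restricted MVBPs, ranks these points by predominance to obtain a candidate $e^2(n)$, and then proves that the sequence $(e^2(n))_{n\geq 1}$ is eventually constant --- the only danger being that the second Eve is ``hidden'' in the same small interval as $\mathbf{e}^1$ infinitely often, which is excluded by an exchangeability estimate ($\mathbb{P}(i_1^p\mbox{ and }i_2^p\mbox{ consecutive})=2/p$). You instead move to a positive time $s$, split off the atom of $m_s$ that turns out to be $\mathbf{e}^1$ using the branching and Markov properties, and apply the Eve property to the residual MVBP; the countable decomposition over the events $\{k_1=k\}$ is the right way to handle the fact that the index of the primitive Eve among the atoms of $m_s$ is not $\mathcal{F}_s$-measurable, since an almost sure statement survives restriction to any event. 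This is a valid alternative and arguably more direct, as it produces exactly the normalized residual $m_t(\cdot)/m_t([0,1]\backslash\{\mathbf{e}^1,\ldots,\mathbf{e}^{i-1}\})$ appearing in the statement; the paper's route buys independence from any extension of the Eve property beyond Lebesgue-type initial conditions, at the price of the stabilization argument for $(e^2(n))_n$.

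Two steps need more care than you give them. First, you invoke the Eve property for a $\Psi$-MVBP started from an arbitrary finite measure (here an atomic one, possibly with dust), whereas Definition \ref{DefEveProperty} and Theorem \ref{TheoremEquivalence} concern the Lebesgue initial condition with total mass $1$. Your one-line justification via condition iii) is too quick: that condition is an almost-sure statement about $Z$ started from $1$, and transferring it to an arbitrary starting mass (and then from Lebesgue measure on $[0,c]$ to a general measure of mass $c$, via the pushforward of the MVBP by the quantile map of the target measure, using that the Eve is almost surely a continuity point of that map) requires an argument --- although the paper itself silently uses the same extension for Lebesgue measure on intervals of length $1/n$, so this is a gap in rigour rather than a wrong idea. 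Second, for a fixed $s$ the primitive Eve need not yet be an atom of $m_s$ (ancestors are born at positive random times, as a jump of $S_{s,t}$ can create an atom of $m_{0,t}$ at a continuity point of $S_{0,s}$), so the events $\{k_1=k\}$ need not cover an event of full probability; this is repaired by running your construction for all rational $s$ and using that every ancestor is an atom of $m_s$ for all $s$ beyond some random time, but it must be said. With these two points filled in, your argument goes through, including the treatment of the \textbf{[Infinite lifetime, possible extinction of ancestors]} case, where your ordering of $(\mathbf{e}^i)_{i>K}$ by extinction times matches the paper's.
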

\begin{proof}
We focus on the \InfNo\ case, the other case is then a mixture of the latter with the \Extinction\ case. For every $n\in\mathbb{N}$, we subdivide $[0,1]$ into
\begin{equation*}
[0,2^{-n}),[2^{-n},2\times 2^{-n}),\ldots,[1-2^{-n},1]
\end{equation*}
Since these intervals are disjoint, the restrictions of $\rmm_t$ to each of them are independent. Therefore, we define for each $i\in[2^n]$, the random point $e(i,n)$ as the Eve of the process
\begin{equation*}
\big(\rmm_t(.\cap[(i-1)2^{-n},i2^{-n})),t\geq 0\big)
\end{equation*}
(note that for $i=2^n$ we take $[1-2^{-n},1]$). In addition, one can define an ordering of the collection $(e(i,n))_{i\in[2^n]}$ according to the asymptotic behaviours of their progenies. More precisely, for two integers $i\ne j\in[2^n]$, thanks to the classification of the behaviours and the Eve property, we have
\begin{equation*}
\lim\limits_{t\rightarrow\infty}\frac{\rmm_t(\{e(i,n)\})}{\rmm_t(\{e(j,n)\})}\in\{0,\infty\}
\end{equation*}
Thus, there exist two r.v. $i_1^n\ne i_2^n \in [2^n]$ such that for all $i \in [2^n]$
\begin{eqnarray*}
\lim\limits_{t\rightarrow\infty}\frac{\rmm_t\big(\{e(i_1^n,n)\}\big)}{\rmm_t\big(\{e(i,n)\}\big)}\!\!&=&\!\!\infty\;\;\mbox{if}\;i\ne i_1^n\\
\lim\limits_{t\rightarrow\infty}\frac{\rmm_t\big(\{e(i_2^n,n)\}\big)}{\rmm_t\big(\{e(i,n)\}\big)}\!\!&=&\!\!\infty\;\;\mbox{if}\;i\ne i_1^n,i_2^n
\end{eqnarray*}
We set $e^1(n) := e(i_1^n,n)$ and $e^2(n) := e(i_2^n,n)$. We claim that almost surely the sequences $(e^1(n))_{n\geq 1}$ and $(e^2(n))_{n\geq 1}$ are eventually constant. This is clear for $(e^1(n))_{n\geq 1}$ since for each $n\geq 1$, $e^1(n) = \rme^{1}$, which is the primitive Eve of the entire population $[0,1]$. We turn our attention to the sequence $(e^2(n))_{n\geq 1}$, in that case the claim is not so clear. Roughly speaking, the wild behaviour this sequence could have is the following: infinitely often, the second Eve $e^{2}(n)$ is "hidden" in the interval $[(i_1^{n-1}-1)2^{-(n-1)},i_1^{n-1}2^{-(n-1)})$ containing the first Eve $\rme^{1}$ at rank $n-1$, but we will see that it cannot occur. Suppose that the claim does not hold. Thus there exists an event $E$ of positive probability on which there exists a sequence $(n_k)_{k\geq 1}$ of integers such that $e^2(n_k-1)\ne e^2(n_k)$ for every $k\geq 1$. From the consistency of the restrictions of the MVBP $\rmm$ to the subintervals defined at ranks $n_k-1$ and $n_k$, we deduce that $e^{2}(n_k)$ is in $[(i_1^{n_k-1}-1)2^{-(n_k-1)},i_1^{n_k-1}2^{-(n_k-1)})$, that is, the same interval as $\rme^1$ at rank $n_k-1$. Hence on the event $E$
\begin{equation}\label{EqExistenceEves}
|\rme^1-e^2(n)| \rightarrow 0 \mbox{ as }n\uparrow\infty
\end{equation}
We now exhibit a contradiction. By the exchangeability of the increments of the MVBP $\rmm$, we know that $(i_1^n,i_2^n)$ is distributed uniformly among the pairs of integers in $[2^n]$. Therefore, one easily deduces that for all $n \geq p$
\begin{equation*}
\mathbb{P}(|\rme^1-e^2(n)|\leq 2^{-p}) \leq \mathbb{P}(|i_1^n-i_2^n|\leq 2^{n-p}+1)\leq 2^{2-p}
\end{equation*}
This implies that the convergence of Equation (\ref{EqExistenceEves}) holds with probability $0$, and $E$ cannot have positive probability. Therefore our initial claim is proved and we can define $\rme^2 := \lim\limits_{n\rightarrow\infty}e^2(n)$.\\
The property is proved for the first two ancestors $\rme^1$ and $\rme^2$. The general case is obtained similarly.\cqfd
\end{proof}

\begin{remark}\label{RemarkNonConservative}
In the case where $\rZ$ reaches $\infty$ in finite time (non-conservative case), we cannot obtain a relevant ordering. Indeed, in that case all the progenies $\rmm_\rT(\{x\})$ of the ancestors $x \ne \rme$ are finite at time $\rT$. Therefore, no natural order appears in that setting.
\end{remark}

\subsection{The Eves and the lookdown representation}\label{SubsectionEvesLD}
Let us motivate the previous ordering by presenting a striking connection with the lookdown representation. The following proposition implies that, if the process of limiting empirical measures of a lookdown process is equal to a given $\Psi$-MVBP, then the initial types are necessarily the sequence of Eves of the $\Psi$-MVBP. We denote by $\rmr_t$ the probability measure obtained by rescaling $\rmm_t$ by its total-mass $\rZ_t$.
\begin{proposition}\label{PropUnicityInitialTypes}
Assume that the Eve property is fulfilled and that the $\Psi$-CSBP does not explode in finite time. Consider a $\Psi$ flow of partitions $(\hat{\Pi}_{s,t},0 \leq s \leq t < \rT)$ defined from the $\Psi$-CSBP $\rZ$ and a sequence $(\upxi_{0}(i))_{i\geq 1}$ of r.v. taking distinct values in $[0,1]$. Let $(\Xi_{t},t \in [0,\rT)) := \mathscr{E}_0(\hat{\Pi},(\upxi_{0}(i))_{i\geq 1})$ be the limiting empirical measures of the lookdown process defined from these objects. If $(\Xi_t,t \in [0,\rT)) = (\rmr_t,t \in [0,\rT))$ a.s., then $(\upxi_{0}(i))_{i\geq 1} = (\rme^i)_{i\geq 1}$ a.s.
\end{proposition}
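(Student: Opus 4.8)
The plan is to read off the progenies of the initial types from the identity $\Xi_t=r_t$, and then to prove that the ordering of the blocks of $\hat{\Pi}$ by least element coincides with the persistence/predominance ordering of the ancestors, so that $\xi_0(i)$ is forced to equal $\mathbf{e}^i$. First I would use the explicit form of the limiting empirical measure,
\[
\Xi_t(dx)=\sum_{i\ge1}|\hat{\Pi}_t(i)|\,\delta_{\xi_0(i)}(dx)+\Big(1-\sum_{i\ge1}|\hat{\Pi}_t(i)|\Big)dx.
\]
Since the $\xi_0(i)$ take distinct values and the remaining (\emph{dust}) part is a multiple of Lebesgue measure, identifying atoms on the two sides of $\Xi_t=r_t$ gives $m_t(\{\xi_0(i)\})=Z_t|\hat{\Pi}_t(i)|$ for all $i,t$ and shows that every atom of $r_t$ sits at one of the $\xi_0(i)$. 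Hence the ancestors of $m$ are exactly the $\xi_0(i)$ with $|\hat{\Pi}_t(i)|>0$ for some $t$, and the progeny of $\xi_0(i)$ is the process $(Z_t|\hat{\Pi}_t(i)|)_t$. It then suffices to show that ordering these ancestors by persistence/predominance returns the order $1,2,3,\dots$.

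For $i=1$, the Eve property gives $r_t\to\delta_{\mathbf{e}^1}$ as $t\uparrow T$; as the dust component carries asymptotically no mass, the atom at the ancestor $\mathbf{e}^1$ has mass tending to $1$, so $\max_i|\hat{\Pi}_t(i)|\to1$ a.s. Now $\hat{\Pi}_t$ is exchangeable, so conditionally on its frequencies the integer $1$ lies in block $i$ with probability $|\hat{\Pi}_t(i)|$; the block containing $1$ is the first block $\hat{\Pi}_t(1)$, whence $\mathbb{E}[|\hat{\Pi}_t(1)|]=\mathbb{E}[\sum_i|\hat{\Pi}_t(i)|^2]\to1$. Because the cocycle property makes $t\mapsto\hat{\Pi}_{0,t}(1)$ increasing (the block of the integer $1$ only absorbs integers under coarsening), $t\mapsto|\hat{\Pi}_t(1)|$ is non-decreasing, and convergence in mean upgrades to $|\hat{\Pi}_t(1)|\to1$ a.s. Together with the first paragraph and $r_t\to\delta_{\mathbf{e}^1}$ this forces $\xi_0(1)=\mathbf{e}^1$.

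The remaining matches I would obtain by induction, exploiting that both orderings are recursive in the same way. On the one hand, by Lemma~\ref{LemmaRecursiveEves} (and the Extinction ordering by extinction times), once $\mathbf{e}^1,\dots,\mathbf{e}^{i-1}$ are removed $\mathbf{e}^i$ is the most predominant --- or, in the Extinction case, the last surviving --- ancestor of the remaining population. On the other hand $\hat{\Pi}_t(i)$ is the block containing the smallest integer outside $\hat{\Pi}_t(1)\cup\dots\cup\hat{\Pi}_t(i-1)$, hence a size-biased pick among the blocks of index $\ge i$; assuming $\xi_0(j)=\mathbf{e}^j$ for $j<i$, the rescaled masses $|\hat{\Pi}_t(k)|/(1-\sum_{j<i}|\hat{\Pi}_t(j)|)$, $k\ge i$, describe precisely the sub-population not descending from $\mathbf{e}^1,\dots,\mathbf{e}^{i-1}$, and re-running the base-case argument on it should give $\xi_0(i)=\mathbf{e}^i$. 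The hard part will be making this reduction rigorous: unlike level $1$, the block $\hat{\Pi}_t(i)$ is a time-dependent set whose defining least element jumps whenever the current $i$-th block is absorbed by a lower one, so the clean monotonicity used for $i=1$ is lost. I expect to resolve this by showing that the flow restricted to a sub-population is again governed by a $\Psi$ flow of partitions --- using the independence of disjoint sub-populations (branching property) as in the proof of Lemma~\ref{LemmaRecursiveEves} --- so that the exchangeability-plus-concentration argument can be carried out one block at a time, the Infinite-lifetime and Extinction cases being treated through predominance and persistence respectively.
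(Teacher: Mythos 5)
Your overall strategy is the same as the paper's: identify the atoms of $r_t$ with the points $\xi_0(i)$ weighted by $|\hat{\Pi}_t(i)|$, and then argue that the least-element ordering of the blocks agrees asymptotically with the persistence/predominance ordering of the ancestors. However, your base case contains a concretely false claim: $t\mapsto\hat{\Pi}_{0,t}(1)$ is \emph{not} increasing as a set, and $t\mapsto|\hat{\Pi}_{0,t}(1)|$ is \emph{not} non-decreasing. The cocycle property reads $\hat{\Pi}_{0,t}=Coag(\hat{\Pi}_{s,t},\hat{\Pi}_{0,s})$, whose blocks are unions of blocks of $\hat{\Pi}_{s,t}$ (not of $\hat{\Pi}_{0,s}$); concretely, under your own identification $|\hat{\Pi}_{0,t}(1)|=r_t(\{\xi_0(1)\})=m_t(\{\xi_0(1)\})/Z_t$, which drops whenever a reproduction event of another ancestor makes $Z$ jump. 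So the "convergence in mean upgrades to a.s.\ convergence by monotonicity" step fails. This is repairable: $\mathbb{E}[|\hat{\Pi}_t(1)|]=\mathbb{E}[\sum_i|\hat{\Pi}_t(i)|^2]\to1$ gives convergence in probability, and since on the fixed (time-independent) event $\{\xi_0(1)\neq\mathbf{e}^1\}$ the two atoms at $\xi_0(1)$ and $\mathbf{e}^1$ have total mass at most $1$ while $r_t(\{\mathbf{e}^1\})\to1$ a.s., one gets $|\hat{\Pi}_t(1)|\to0$ a.s.\ on that event, forcing $\mathbb{P}(\xi_0(1)\neq\mathbf{e}^1)=0$. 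You should make this the argument rather than invoke monotonicity.

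The induction step is where the genuine gap lies: you yourself flag that the reduction to the sub-population not descending from $\mathbf{e}^1,\dots,\mathbf{e}^{i-1}$ is not made rigorous, and as stated it requires proving that the relabelled partition of the integers outside the first $i-1$ blocks is again governed by a $\Psi$ flow (for a different, random initial mass) together with an Eve property for the restricted population --- none of which is established. The paper avoids this restriction/renormalization entirely: from $\Xi_t=r_t$ it gets the identity of decreasing rearrangements $(|\hat{\Pi}_t(j)|)^{\downarrow}=(r_t(\{\mathbf{e}^j\}))^{\downarrow}$, uses the defining property of the Eves ordering to get $\mathbb{P}[(r_t(\{\mathbf{e}^j\}))_{j\le i}=(r_t(\{\mathbf{e}^j\}))^{\downarrow}_{j\le i}]\to1$, and then uses exchangeability (the first $i$ blocks by least element form a size-biased sample of the frequencies) to conclude $\mathbb{P}[(|\hat{\Pi}_t(j)|)_{j\le i}=(|\hat{\Pi}_t(j)|)^{\downarrow}_{j\le i}]\to1$, which identifies $(\xi_0(j))_{j\le i}$ with $(\mathbf{e}^j)_{j\le i}$ in one step for every $i$. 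Separately, in the \textbf{[Extinction]} case there is no predominance to speak of; the paper observes that the initial types of the lookdown are ordered by decreasing persistence by construction and matches this with the extinction-time ordering of the Eves --- your sketch gestures at this but gives no argument for why the $i$-th block's frequency vanishes after the $j$-th's for $j>i$. You should either prove the restriction lemma you rely on, or switch to the direct rearrangement argument.
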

\begin{proof}
We prove the proposition in the \Extinction\ and \InfNo\ cases, as the \InfPos\ case is a combination of these two cases. Consider the lookdown process
\begin{equation*}
(\upxi_{t}(i),t\in[0,\rT))_{i\geq 1}:=\mathscr{L}_0(\hat{\Pi},(\upxi_{0}(i))_{i\geq 1})
\end{equation*}
Suppose this lookdown process verifies the assumptions of the proposition: there exists an event $\Omega^*$ of probability $1$ on which
\begin{equation}\label{EqIdentityXr}
(\Xi_{t},t \in [0,\rT)) = (\rmr_t,t \in [0,\rT))
\end{equation}
We have to prove that $\upxi_{0}(i) = \rme^{i}$ for all $i\geq 1$ a.s. First we notice that each initial type $\upxi_{0}(i)$ (resp. each ancestor $\rme^{i}$) is associated with a process of frequencies $\big(|\hat{\Pi}_{0,t}(i)|,t\geq 0\big)$ (resp. $\big(\rmr_t(\{\rme^{i}\}),t\geq 0\big)$). In addition, we have
\begin{equation}\label{EqFreqH}
\Xi_t(dx) = \sum_{i\geq 1}|\hat{\Pi}_{0,t}(i)|\delta_{\upxi_{0}(i)}(dx) + \Big(1-\sum_{i\geq 1}|\hat{\Pi}_{0,t}(i)|\Big)dx
\end{equation}
We work on the event $\Omega^*$ throughout this proof.\\
\Extinction\ case. There is no drift part in Equation (\ref{EqFreqH}), and the two sets $\{\upxi_{0}(i);i\geq 1\}$ and $\{\rme^i;i\geq 1\}$ are equal. The initial types $\{\upxi_{0}(i);i\geq 1\}$ of the lookdown process are ordered by decreasing persistence by construction. The Eves of the $\Psi$-MVBP $(\rmm_t,t \in [0,\rT))$ are also ordered by decreasing persistence. Therefore $\upxi_{0}(i) = \rme^{i}$ for all $i\geq 1$.\\
\InfNo\ case. From Equations (\ref{EqIdentityXr}) and (\ref{EqFreqH}), we know that
\begin{equation}\label{EqEqualityHr}
\big(|\hat{\Pi}_{0,t}(i)|\big)^{\downarrow}_{i\geq 1} = \big(\rmr_t(\{\rme^{i}\})\big)^{\downarrow}_{i\geq 1}
\end{equation}
for all $t \in [0,\rT)$. By definition of the ancestors, we know that for every $i\geq 1$
\begin{equation*}
\mathbb{P}\Big[\big(\rmr_t(\{\rme^{j}_{0}\})\big)_{1 \leq j \leq i} = \big(\rmr_t(\{\rme^{j}_{0}\})\big)^{\downarrow}_{1 \leq j \leq i}\Big] \underset{t \rightarrow \infty}{\longrightarrow} 1
\end{equation*}
Thus, using the exchangeability of the partition $\hat{\Pi}_{0,t}$, Equation (\ref{EqEqualityHr}) and the last identity, we deduce that for every $i\geq 1$
\begin{equation*}
\mathbb{P}\Big[\big(|\hat{\Pi}_{0,t}(j)|\big)_{1 \leq j \leq i} = \big(|\hat{\Pi}_{0,t}(j)|\big)^{\downarrow}_{1 \leq j \leq i}\Big] \underset{t\rightarrow\infty}{\longrightarrow} 1
\end{equation*}
which entails that
\begin{equation*}
\mathbb{P}\Big[\big(\upxi_{0}(j)\big)_{1 \leq j \leq i} = (\rme^{j})_{1 \leq j \leq i}\Big] = 1
\end{equation*}
This concludes the proof.\cqfd
\end{proof}
We now determine the distribution of the sequence of Eves $(\rme^i)_{i\geq 1}$.
\begin{proposition}\label{PropositionLawAncestors}
The sequence $(\rme^i)_{i\geq 1}$ is i.i.d. uniform$[0,1]$ and is independent of the sequence of processes $\big(\rmm_t(\{\rme^i\}),t \in [0,\rT)\big)_{i\geq 1}$
\end{proposition}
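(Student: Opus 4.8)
The plan is to read off both assertions from the lookdown realisation of the MVBP, using Proposition \ref{PropUnicityInitialTypes} to identify the Eves with the initial types. Since the statement is purely distributional, it suffices to exhibit one version of $(m_t,t\in[0,T))$, built with the correct law, for which both claims are transparent; the general case then follows because the sequence of Eves and the associated progenies are measurable functionals of the MVBP trajectory, so their joint law is determined by the (unique) law of $m$ recalled in Subsection \ref{SubsectionMVBP}.

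First I would fix the $\Psi$ flow of partitions $\hat{\Pi}$ constructed pathwise from $Z$ in Subsection \ref{SubsectionStoFlow}, together with an \emph{independent} sequence $(\xi_0(i))_{i\geq 1}$ of i.i.d. uniform$[0,1]$ random variables. Forming the limiting empirical measures $(\Xi_t,t\in[0,T)) := \mathscr{E}_0(\hat{\Pi},(\xi_0(i))_{i\geq 1})$, the process $(Z_t\cdot\Xi_t,t\in[0,T))$ is a $\Psi$-MVBP started from the Lebesgue measure on $[0,1]$, so by uniqueness of its distribution we may and do take $m_t := Z_t\cdot\Xi_t$. As the $\xi_0(i)$ are almost surely pairwise distinct, the rescaled process $r_t = m_t/Z_t = \Xi_t$ satisfies the hypotheses of Proposition \ref{PropUnicityInitialTypes}, which then yields $\mathbf{e}^i = \xi_0(i)$ for every $i\geq 1$, almost surely.

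The two claims now follow. On the one hand $(\mathbf{e}^i)_{i\geq 1} = (\xi_0(i))_{i\geq 1}$ is i.i.d. uniform$[0,1]$ by construction. On the other hand, reading off the atoms from the explicit form $\Xi_t(dx) = \sum_{i\geq 1}|\hat{\Pi}_{0,t}(i)|\delta_{\xi_0(i)}(dx) + (1-\sum_{i\geq 1}|\hat{\Pi}_{0,t}(i)|)dx$, whose continuous part carries no atom, gives $m_t(\{\mathbf{e}^i\}) = Z_t\,|\hat{\Pi}_{0,t}(i)|$. Hence the whole sequence of progeny processes $(m_t(\{\mathbf{e}^i\}),t\in[0,T))_{i\geq 1}$ is a measurable function of the pair $(Z,\hat{\Pi})$ alone. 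Since $(\xi_0(i))_{i\geq 1}$ was chosen independent of $(Z,\hat{\Pi})$, the sequence of Eves is independent of the sequence of progeny processes, as claimed.

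The step I expect to demand the most care is the identification $\mathbf{e}^i = \xi_0(i)$ for all $i$, i.e. the legitimacy of invoking Proposition \ref{PropUnicityInitialTypes} uniformly across the regimes of the classification of behaviours, so that the ordering of the ancestors by persistence/predominance genuinely matches the enumeration of the initial types. Once that matching is granted, the independence is a soft consequence of the product structure of the coupling, and the passage to an arbitrary version of $m$ is only a matter of observing that the Eves and their progenies are path-functionals whose joint law is fixed by the law of the MVBP.
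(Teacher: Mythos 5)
Your proof is correct and follows essentially the same route as the paper's: build the MVBP from the lookdown representation with an independent i.i.d. uniform sequence of initial types, invoke Proposition \ref{PropUnicityInitialTypes} to identify those types with the Eves, observe that the progeny processes are functionals of $(Z,\hat{\Pi})$ alone and hence independent of the types, and transfer to a general version of $m$ by the distributional identity. The paper phrases this last transfer via a measurable map $\Phi$ sending an MVBP to its Eves, but the content is identical to your closing remark.
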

\begin{proof}
Consider a sequence $(\upxi_{0}(i))_{i\geq 1}$ of i.i.d. uniform$[0,1]$ r.v. and a $\Psi$ flow of partitions $(\hat{\Pi}_{s,t},0 \leq s \leq t < \rT)$ defined from the $\Psi$-CSBP $(\rZ_t,t \in [0,\rT))$. Let $(\Xi_{t},t \in [0,\rT)) := \mathscr{E}_{0}(\hat{\Pi},(\upxi_{0}(i))_{i\geq 1})$ be the limiting empirical measures of the corresponding lookdown process. Denote by $\Phi$ the measurable map that associates to a $\Psi$-MVBP its sequence of Eves. From Proposition \ref{PropUnicityInitialTypes}, we deduce that a.s.
\begin{equation*}
\Phi\big((\rZ_t\cdot \Xi_t(.),t \in [0,\rT))\big)=(\upxi_{0}(i))_{i\geq 1}
\end{equation*}
Since $(\rZ_t\cdot \Xi_t(.),t \in [0,\rT)) \stackrel{(d)}{=} (\rmm_t(.),t\in[0,\rT))$, we obtain thanks to Proposition \ref{PropUnicityInitialTypes} the following identity:
\begin{equation}\label{EquationPhi}
\big((\rmm_t(.),t\in[0,\rT)),(\rme^{i})_{i\geq 1}\big) \stackrel{(d)}{=} \big((\rZ_t\cdot \Xi_t(.),t \in [0,\rT)),(\upxi_{0}(i))_{i\geq 1}\big)
\end{equation}
Therefore $(\rme^{i})_{i\geq 1}$ is a sequence of i.i.d. uniform$[0,1]$ r.v. In addition the collection of asymptotic frequencies $(\rZ_t\cdot \Xi_t(\{\upxi_{0}(i)\}),t\in[0,\rT))_{i\geq 1}$ only depends on $\hat{\Pi}$, thus it is independent of the initial types $(\upxi_{0}(i))_{i\geq 1}$. The asserted result follows.\cqfd
\end{proof}

\section{Some properties of the genealogy}\label{SectionLimitTh}
Consider a branching mechanism $\Psi$, a $\Psi$-CSBP $\rZ$ started from $1$ assumed to be c\`adl\`ag and a $\Psi$ flow of partitions $(\hat{\Pi}_{s,t},0 \leq s \leq t < \rT)$ defined from the $\Psi$-CSBP $\rZ$. We present some properties of the $\Psi$ flow of partitions before stating a limit theorem. For the sake of simplicity, let $\hat{\Pi}_t := \hat{\Pi}_{0,t}$ for all $t \geq 0$ (recall that Theorem \ref{TheoremEquivalence} allows us to extend this process after time $\rT$).

\subsection{Dust and modification}
\begin{theorem*}\textup{\textbf{\ref{ThDust}}}
The following dichotomy holds:
\begin{itemize}
\item If $\sigma = 0$ and $\int_{0}^{\infty}(1\wedge h)\nu(dh) < \infty$, then almost surely for all $t \in (0,\rT)$, the partition $\hat{\Pi}_{t}$ has singleton blocks.
\item Otherwise, almost surely for all $t\in(0,\rT)$, the partition $\hat{\Pi}_t$ has no singleton blocks.
\end{itemize}
Furthermore when $\sigma = 0$, almost surely for all $t \in (0,\rT]$ the asymptotic frequency of the dust component of $\hat{\Pi}_t$ is equal to $\prod_{s \leq t}(1-\frac{\Delta \rZ_s}{\rZ_s})$ whereas when $\sigma > 0$, almost surely for all $t \in (0,\rT]$ there is no dust.
\end{theorem*}
\begin{remark}
The condition $\sigma = 0$ and $\int_{0}^{\infty}(1\wedge h)\nu(dh) < \infty$ is equivalent to saying that $\Psi$ is the Laplace exponent of a L\'evy process with finite variation paths.
\end{remark}
\begin{proof}
By the definition of a $\Psi$ flow of partitions, we know that, conditional on $t < \rT$, $\hat{\Pi}_t$ is distributed as a paint-box on the subordinator $\rS_{0,t}$ therefore it has no singleton blocks iff $d_t = 0$ (recall that $d_t$ is the drift term of the Laplace exponent $u_t(.)$). Since for all $t,s \geq 0$, $u_{t+s}(.) = u_t\circ u_s(.)$, classical results ensure that $d_{t+s} = d_t.d_s$. Therefore
\begin{equation*}
\exists t > 0, d_t > 0 \Leftrightarrow \forall t > 0, d_t > 0
\end{equation*}
Also, the equivalence $d_t > 0 \Leftrightarrow \sigma = 0$ and $\int_{0}^{\infty}(1\wedge h)\nu(dh) < \infty$ can be found in~\cite{Silverstein68}.\\
Suppose now that $\sigma = 0$. Classical results on exchangeable partitions (see~\cite{BertoinRandomFragmentation} for instance) ensure that the asymptotic frequency of the dust is almost surely equal to the probability that the first block is a singleton conditional on the mass partition. If $t < \rT$, then $\hat{\Pi}_t(1)$ is a singleton iff no elementary reproduction event has involved $1$. This occurs with probability $\prod_{s \leq t}(1-\frac{\Delta \rZ_s}{\rZ_s})$ conditionally on $\rZ$. If $t = \rT$, then either $\hat{\Pi}_\rT$ has finitely many blocks and in that case it cannot have dust, or it has infinitely many blocks. In the latter case $\hat{\Pi}_\rT(1)$ is a singleton iff for all $t < \rT$ $\hat{\Pi}_t(1)$ is a singleton. This occurs with probability $\prod_{s < \rT}(1-\frac{\Delta \rZ_s}{\rZ_s})$.\\
When $\sigma > 0$, the number of blocks in $\hat{\Pi}_t$ is finite almost surely since we are either in the \Extinction\ case or in the \InfPos\ case.\\
Finally, for all $t\in[0,\rT)$ we have proved that the asserted properties hold almost surely. Since the process of asymptotic frequencies of $\hat{\Pi}_t$ is c\`adl\`ag, we deduce that these properties hold almost surely for all $t\in[0,\rT)$.\cqfd
\end{proof}
Another interesting question about genealogical structures is the following: can we recover the population size from the genealogy ?
\begin{proposition}
The process $(\rZ_t,t\geq 0)$ is measurable in the filtration ${\cal F}^{\hat{\Pi}}_t := \sigma\{\hat{\Pi}_{r,s},0 \leq r \leq s \leq t\}, t\geq 0$.
\end{proposition}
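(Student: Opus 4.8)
The plan is to prove that $Z_s$ is ${\cal F}^{\hat{\Pi}}_s$-measurable for every $s\leq t$; since $Z$ is c\`adl\`ag this recovers the whole path. The guiding idea is that every elementary reproduction event recorded by the flow carries quantitative information about $Z$, so that the jump data together with the rate of binary mergers should pin the path down. First I would extract the rescaled jumps: for each $s$ with $\hat{\Pi}_{s-,s}\neq 0_{[\infty]}$ the partition $\hat{\Pi}_{s-,s}$ has a single non-singleton block, and its asymptotic frequency is a measurable functional of $\hat{\Pi}_{s-,s}$, hence ${\cal F}^{\hat{\Pi}}$-measurable. A point of ${\cal N}_{\nu}$ at time $s$ produces, via the paint-box, a block of positive frequency exactly equal to $\frac{\Delta Z_s}{Z_s}$ (a paint-box block of frequency $f>0$ meets $[n]$ in at least two integers once $n$ is large, so it is seen in the projective limit), whereas a Kingman point of ${\cal N}_{\sigma}$ produces a two-element block of frequency $0$. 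The two types are therefore distinguishable, and I can recover the whole family $\{(s,f_s):\Delta Z_s>0,\ s\leq t\}$ with $f_s:=\frac{\Delta Z_s}{Z_s}$, equivalently the dust frequency $D_t=\prod_{s\leq t}(1-f_s)$ read off as the frequency of the singletons of $\hat{\Pi}_t$.

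When $\sigma>0$ I would recover $\int_0^t\frac{\sigma^2}{Z_s}ds$ by a conditional strong law of large numbers over disjoint pairs. For each $i\geq 1$ let $N^i_s$ count the times $r\leq s$ at which the non-singleton block of $\hat{\Pi}_{r-,r}$ equals $\{2i-1,2i\}$; given $Z$ these are independent Cox processes, each with intensity $\frac{\sigma^2}{Z_r}dr$ (this is exactly the structure of ${\cal N}_{\sigma}$). Hence $\frac{1}{N}\sum_{i=1}^{N}N^i_t\to\int_0^t\frac{\sigma^2}{Z_s}ds$ almost surely, and the limit is an ${\cal F}^{\hat{\Pi}}$-measurable functional. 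Since $r\mapsto 1/Z_r$ is c\`adl\`ag, differentiating this increasing function on the right returns $\frac{\sigma^2}{Z_t}$, and thus $Z_t$, as an ${\cal F}^{\hat{\Pi}}_t$-measurable quantity.

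When $\sigma=0$ there are no binary mergers and I would split further according to the variation of $\Psi$. In the finite variation case $\int_0^1 h\,\nu(dh)<\infty$ the jumps of $Z$ are summable and the continuous finite-variation part of $Z$ is the drift $-b\int_0^{\cdot}Z_s\,ds$ with $b=\alpha+\int_0^1 h\,\nu(dh)$; It\^o's formula for $\log Z$ then gives $\log Z_t=-bt-\log D_t$, i.e. $Z_t=e^{-bt}/D_t$, which is recoverable since $b$ is known and $D_t>0$ (consistently with Theorem \ref{ThDust}, dust is present here). The hard case, and the main obstacle, is $\sigma=0$ with infinite variation: then $D_t=0$, the jumps are not summable, and no closed formula is available. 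Here I would exploit that infinite variation forces $\nu((0,\infty))=\infty$, so that jumps accumulate and carry local information. Using that the ${\cal F}$-compensator of $\sum_{s}\delta_{(s,f_s)}$ is $Z_{s-}\,\nu\circ\phi_{Z_{s-}}^{-1}(df)\,ds$ (as in the proof of Proposition \ref{PropCompensator}) and the right-continuity of $s\mapsto Z_s$, one expects the almost sure local limit
\begin{equation*}
\frac{1}{\eta}\,\#\{s\in(t,t+\eta]:f_s\in A\}\ \underset{\eta\downarrow 0}{\longrightarrow}\ Z_t\,\nu\circ\phi_{Z_t}^{-1}(A)
\end{equation*}
for sets $A$ bounded away from $0$ with $\nu\circ\phi_{Z_t}^{-1}$-null boundary. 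Since $\Psi$, and hence $\nu$, is known, recovering $Z_t$ reduces to inverting the map $z\mapsto z\,\nu\circ\phi_z^{-1}$, which I would show to be injective (comparing the observed intensities $z\,\overline{\nu}(\tfrac{fz}{1-f})$ on a common range, where $\overline{\nu}(x)=\nu((x,\infty))\not\equiv 0$, distinguishes any two values of $z$). Establishing this local law of large numbers for the jump point process with time-varying intensity, and the injectivity, is the technical heart of the argument; once available, it yields $Z_t$ as an ${\cal F}^{\hat{\Pi}}_t$-measurable quantity in the remaining case, completing the proof.
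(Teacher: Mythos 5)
Your overall strategy is the same as the paper's (which itself only gives a sketch): split on whether $\sigma>0$, recover $\sigma^2/Z_t$ from the rate of the frequency-zero binary mergers in the first case, and work with the rescaled jumps $f_s=\Delta Z_s/Z_s$ in the second. Your treatment of the case $\sigma>0$ (law of large numbers over the disjoint pairs $\{2i-1,2i\}$, which given $Z$ are independent Cox processes, followed by right-differentiation of the increasing limit $t\mapsto\int_0^t\sigma^2/Z_s\,ds$) is correct and makes the paper's one-line assertion precise; likewise the identification of jump events versus Kingman events through the asymptotic frequency of the unique non-singleton block is sound, and the closed formula $Z_t=e^{-bt}/D_t$ in the finite-variation case with $\sigma=0$ is correct and more explicit than anything the paper writes down.

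The gap is in the remaining case $\sigma=0$ with infinite variation. The displayed local limit $\frac{1}{\eta}\,\#\{s\in(t,t+\eta]:f_s\in A\}\to Z_t\,\nu\circ\phi_{Z_t}^{-1}(A)$ is false for a fixed set $A$ bounded away from $0$: for such $A$ the counting process $\#\{s\le\cdot:f_s\in A\}$ has a locally finite compensator, so for fixed $t$ there are almost surely no points in $(t,t+\eta]$ once $\eta$ is small; the left-hand side only takes the values $0$ and values $\geq 1/\eta$, and its a.s. limit is $0$, not the intensity. An almost sure ratio of this kind can only hold if the expected number of points in the window tends to infinity, i.e. if you let $A=A_\eta$ shrink towards $0$ as $\eta\downarrow 0$ fast enough that $\eta\, Z_t\,\nu\circ\phi_{Z_t}^{-1}(A_\eta)\to\infty$ --- which is available here precisely because infinite variation forces $\nu((0,\infty))=\infty$ --- and then prove a ratio limit theorem for the counting process against its diverging compensator, together with an injectivity statement for $z\mapsto z\,\nu\circ\phi_z^{-1}(A_\eta)$ that is uniform along the chosen sets. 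As written, the key display would not survive. To be fair, the paper's own proof is explicitly a sketch and does not resolve this subcase either (it merely asserts that the rescaled jumps together with the drift determine the path), so you have correctly located the real difficulty, but the specific route you propose through it fails in its present form.
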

\begin{proof}
We give a sketch of the proof. Suppose that $\sigma > 0$ then the infinitesimal jumps due to binary coagulation events allow one to recover the jump rates which is $\sigma^2/\rZ_t$ at any given time $t \geq 0$ thus the process $\rZ$ is entirely recovered from this only information. Now suppose that $\sigma = 0$. The rescaled jumps $(\frac{\Delta \rZ_t}{\rZ_t},t\geq 0)$ are measurable w.r.t. $({\cal F}^{\hat{\Pi}}_t,t\geq 0)$. Conjointly with the knowledge of the deterministic drift $\gamma$, we are able to recover the paths of the process.\cqfd
\end{proof}
Recall that the trajectories of a stochastic flow of partitions are not necessarily deterministic flows of partitions: the cocycle property does not necessarily hold simultaneously for all triplets $r < s < t$. But we have mentioned that this property is actually verified in the particular case of a flow of partitions defined from a c\`adl\`ag CSBP as presented in Subsection \ref{SubsectionStoFlow}. The goal of what follows is to prove that any $\Psi$ flow of partitions admits a modification whose trajectories are deterministic flows of partitions. The following two results are proved in Section \ref{Proofs}.
\begin{lemma}\label{LemmaFeller}
The process $(\rZ_t,\hat{\Pi}_t; t \geq 0)$ is a Markov process in its own filtration with a Feller semigroup.
\end{lemma}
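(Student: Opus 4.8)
The plan is to read off the transition semigroup from the cocycle property and then check the two defining features of a Feller semigroup on the locally compact state space $E:=(0,\infty)\times\mathscr{P}_{\infty}$, the process being killed (sent to a cemetery) when its mass component leaves $(0,\infty)$, i.e. at the lifetime $T$. First I would establish the Markov property. For $s,t\geq 0$ the cocycle property gives $\hat{\Pi}_{t+s}=\hat{\Pi}_{0,t+s}=Coag(\hat{\Pi}_{t,t+s},\hat{\Pi}_{0,t})=Coag(\hat{\Pi}_{t,t+s},\hat{\Pi}_t)$. By the construction of Subsection \ref{SubsectionStoFlow}, the increment $\hat{\Pi}_{t,t+s}$ is a measurable functional of $(Z_{t+r},0\leq r\leq s)$ together with the paint-box marks of ${\cal N}_{\nu}$ and the doubly stochastic Poisson marks of ${\cal N}_{\sigma}$ carried by $(t,t+s]$; by the Markov property of $Z$ and the freshness of these marks, it is conditionally independent of ${\cal F}_t$ given $Z_t$, with conditional law equal to that of $\hat{\Pi}_{0,s}$ under $\mathbb{P}_{Z_t}$. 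Hence $(Z_t,\hat{\Pi}_t)_{t\geq0}$ is Markov with respect to ${\cal F}_t$, and a fortiori in its own (smaller) filtration, and its sub-Markovian semigroup is
\[
P_sf(z,\pi)=\mathbb{E}_z\big[f\big(Z_s,Coag(\hat{\Pi}_{0,s},\pi)\big)\mathbf{1}_{\{s<T\}}\big],\qquad f\in C_0(E).
\]

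Next I would verify the Feller map property $P_s\colon C_0(E)\to C_0(E)$. Since the metric on $\mathscr{P}_{\infty}$ is generated by the restrictions $\pi\mapsto\pi^{[n]}$ and $Coag(\rho,\pi)^{[n]}$ depends only on $\rho^{[n]},\pi^{[n]}$, and since $Coag$ is Lipschitz (Subsection \ref{SubsectionPartitions}), continuity in the partition argument is immediate by dominated convergence. The decay at the boundary is also soft: as $z\to 0$ one has $Z_s\to 0$ in probability (from $\mathbb{E}_z[e^{-\lambda Z_s}]=e^{-zu_s(\lambda)}\to 1$) and as $z\to\infty$ one has $Z_s\to\infty$ in probability, so in either case $f(Z_s,\cdot)\to 0$ for $f\in C_0(E)$ and $P_sf(z,\pi)\to 0$.

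The real work is continuity in the mass variable $z$ on $(0,\infty)$, and I would carry it out through the flow-of-subordinators representation of Theorem \ref{TheoremFoP}. Fix $s>0$, let $(X_a,a\geq0)$ be a subordinator with Laplace exponent $u_s$ (with its possible killing), and let $(U_i)_{i\geq1}$ be i.i.d.\ uniform$[0,1]$ independent of $X$. For every $z$ put $\Gamma(z):=(X_z,\rho_z)$, where $\rho_z:=\mathscr{P}(X|_{[0,z]})$ is the paint-box built from $(U_i)$, with the convention $\rho_z=1_{[\infty]}$ on $\{X_z\in\{0,\infty\}\}$. By Theorem \ref{TheoremFoP} with $n=1$, and using $\mathbf{1}_{\{s<T\}}=\mathbf{1}_{\{0<X_z<\infty\}}$ together with $\hat{\Pi}_T=1_{[\infty]}$ on $\{T<\infty\}$ (Proposition \ref{PropEveTFinite}), one gets $P_sf(z,\pi)=\mathbb{E}\big[f\big(X_z,Coag(\rho_z,\pi)\big)\mathbf{1}_{\{0<X_z<\infty\}}\big]$ with a single coupling valid for all $z$ simultaneously. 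Now fix a deterministic $z\in(0,\infty)$: almost surely $X$ is continuous and not killed at $z$, so $X_{z'}\to X_z$ as $z'\to z$; and because the $U_i$ are independent of $X$, almost surely each threshold $X_zU_i$ avoids the countable set of jump endpoints $\{X_{a-},X_a\}$ of $X|_{[0,z]}$, so the allocation of $X_{z'}U_1,\dots,X_{z'}U_n$ to the jump intervals of $X|_{[0,z']}$ stabilizes and $\rho_{z'}^{[n]}\to\rho_z^{[n]}$. Thus $\Gamma(z')\to\Gamma(z)$ a.s.\ in $E$, and since $f\in C_0(E)$ is bounded and vanishes at the boundary (which handles the event $\{X_z=0\}$ approached from $z'>z$), dominated convergence gives continuity of $z\mapsto P_sf(z,\pi)$, jointly with $\pi$.

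Finally I would record the pointwise continuity at $s=0$: for $(z,\pi)\in E$, $Z_s\to z$ a.s.\ and, because ${\cal P}$ has finitely many points on compact time sets, $\hat{\Pi}_{0,s}\to 0_{[\infty]}$, whence $Coag(\hat{\Pi}_{0,s},\pi)\to\pi$ and $P_sf(z,\pi)\to f(z,\pi)$. Together with $P_s\colon C_0(E)\to C_0(E)$ and the evident positivity and contraction properties, standard semigroup theory upgrades this to strong continuity, so $(P_s)_{s\geq0}$ is Feller. I expect the continuity in $z$ to be the main obstacle: a direct comparison of the genealogies of two CSBP started from nearby masses fails because the additive (branching-property) coupling does not keep the partitions close, whereas the paint-box representation from Theorem \ref{TheoremFoP} makes the whole family depend on a single subordinator; the one delicate point there is the almost-sure threshold-avoidance that forces the restricted paint-box to converge, and the observation that the partition component does not extend continuously to the frozen boundary, which is precisely why one works on $(0,\infty)\times\mathscr{P}_{\infty}$ with killing at $T$ rather than on the compactification.
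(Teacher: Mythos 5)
Your proposal is correct and follows essentially the same route as the paper: reduce continuity in $(z,\pi)$ to the subordinator/paint-box representation of Theorem \ref{TheoremFoP}, couple the paint-boxes for nearby masses through a single subordinator and a shared sequence of uniforms, treat the cemetery states $\{0,\infty\}$ separately, and check pointwise convergence as $s\downarrow 0$. The only difference is cosmetic: the paper replaces the overshooting uniforms by fresh independent ones and bounds the probability that the restrictions to $[n]$ agree by $(1-\epsilon)^{n+1}$, whereas you keep the same rescaled uniforms and argue almost-sure stabilization via threshold-avoidance; both versions of the coupling work.
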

\begin{proposition}\label{PropositionRegularization}
Consider a $\Psi$ flow of partitions $(\hat{\Pi}_{s,t},0 \leq s \leq t < \rT)$ with underlying $\Psi$-CSBP $(\rZ_t, 0 \leq t < \rT)$. There exists a process $(\tilde{\hat{\Pi}}_{s,t},0 \leq s \leq t < \rT)$ such that:\begin{itemize}
\item For all $s \leq t$, almost surely on the event $\{t < \rT\}$ $\hat{\Pi}_{s,t} = \tilde{\hat{\Pi}}_{s,t}$.
\item For $\mathbb{P}$-a.a. $\omega \in \Omega$, $\tilde{\hat{\Pi}}(\omega)$ is a deterministic flow of partitions without simultaneous mergers.
\end{itemize}
\end{proposition}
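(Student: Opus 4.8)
The plan is to build $\tilde{\hat{\Pi}}$ by first pinning down the flow on rational times, where the cocycle can be made to hold \emph{simultaneously} for all triplets on a single event of full probability, and then to extend it to all times by one-sided limits, using the Feller property of Lemma~\ref{LemmaFeller} to guarantee that these limits exist. Write $D := \mathbb{Q}_+$. For each fixed rational triple $r < s < t$, the defining cocycle relation of a $\Psi$ flow of partitions gives $\hat{\Pi}_{r,t} = Coag(\hat{\Pi}_{s,t},\hat{\Pi}_{r,s})$ almost surely on $\{t < T\}$; intersecting over the countably many such triples yields an event $\Omega_1$ of probability one on which the cocycle holds simultaneously for all rational triples inside the lifetime.

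Next I would obtain the necessary path regularity. By the stationarity of increments exhibited in the finite dimensional distributions of Theorem~\ref{TheoremFoP} (the increments $S_{t_{i-1},t_i}$ depend only on the time differences and are independent) together with Lemma~\ref{LemmaFeller}, for every fixed rational $s$ the shifted process $u \mapsto (Z_{s+u},\hat{\Pi}_{s,s+u})$ is again Feller, hence along $D$ it admits right limits and left limits at every point, almost surely. Intersecting over the countably many rational $s$ gives an event $\Omega_2$ of probability one on which all these one-sided limits exist. An analysis of the coagulation structure shows that the two-parameter family is right-continuous in each variable separately: a reproduction event recorded at an instant $\tau$ contributes to $\hat{\Pi}_{s,t}$ exactly when $s < \tau \leq t$, so the family jumps as $t$ increases through $\tau$ and as $s$ decreases through $\tau$, and is right-continuous at $\tau$ in both cases.

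On $\Omega_1 \cap \Omega_2$ I would then set, for real $0 \le s \le t < T$,
\begin{equation*}
\tilde{\hat{\Pi}}_{s,t} := \lim_{\substack{s' \downarrow s,\ t' \downarrow t\\ s',t' \in D,\ s' \le t'}} \hat{\Pi}_{s',t'},
\end{equation*}
the limit existing by the regularity obtained above, with the convention $\tilde{\hat{\Pi}}_{s,s} = 0_{[\infty]}$. That $\tilde{\hat{\Pi}}$ is a modification is checked pointwise: for fixed $(s,t)$ the cocycle $\hat{\Pi}_{s,t'} = Coag(\hat{\Pi}_{t,t'},\hat{\Pi}_{s,t})$ together with the finite dimensional distributions inherited from the flow of subordinators (Theorem~\ref{TheoremFoP}) give $\hat{\Pi}_{t,t'}\to 0_{[\infty]}$ as $t'\downarrow t$, and the same on the $s$-side, so that $\hat{\Pi}_{s',t'} \to \hat{\Pi}_{s,t}$ in probability; since the limit defining $\tilde{\hat{\Pi}}_{s,t}$ also converges almost surely, the two limits agree and $\tilde{\hat{\Pi}}_{s,t} = \hat{\Pi}_{s,t}$ almost surely on $\{t < T\}$. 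The cocycle property for arbitrary real triples $r < s < t$ then follows from its validity on rationals on $\Omega_1$ and the Lipschitz continuity of $Coag$ (see Equation~(\ref{EqCoag})) by passing to the right-limits, while the diagonal and boundary conditions $\tilde{\hat{\Pi}}_{s,s} = 0_{[\infty]}$ and $\tilde{\hat{\Pi}}_{s-,s-} = 0_{[\infty]}$ follow from the stochastic continuity just established together with the finiteness of jumps below.

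The delicate point, which I expect to be the main obstacle, is the \textbf{absence of simultaneous mergers}: one must show that for every $s \in (0,T)$ the partition $\tilde{\hat{\Pi}}_{s-,s}$ has at most one non-singleton block, and that on compacts of $[0,T)$ only finitely many restrictions $\tilde{\hat{\Pi}}^{[n]}_{s-,s}$ differ from $0_{[n]}$. Here I would argue at the level of the restriction to $[n]$: by Proposition~\ref{PropCompensator} the counting processes $(L_t(n,K))_{\#K \ge 2}$ have absolutely continuous compensators $\int_0^{\cdot}\lambda_{n,\#K}(Z_{s-},\Psi)\,ds$, so they are finite on compacts and no two of them jump at the same instant almost surely, and each single jump of $L(n,K)$ corresponds to a partition $\pi_K^{[n]}$ with exactly one non-singleton block. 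Letting $n \to \infty$ along a countable scheme and using this finiteness yields, on an event of full probability, that $\tilde{\hat{\Pi}}_{s-,s}$ has at most one non-singleton block simultaneously for all $s$. Combining all of the above shows that $\mathbb{P}$-almost every trajectory of $\tilde{\hat{\Pi}}$ is a deterministic flow of partitions without simultaneous mergers, completing the proof.
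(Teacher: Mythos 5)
Your overall strategy coincides with the paper's: fix the cocycle simultaneously on all rational triplets on a single event of full probability, use the Feller property of Lemma~\ref{LemmaFeller} (together with the fact that $\hat{\Pi}^{[n]}_{p,q}$ is trivial for $q-p$ small) to obtain one-sided limits along the rationals, extend by right limits, and verify the axioms of a deterministic flow. The only cosmetic difference is that the paper defines $\tilde{\hat{\Pi}}_{s,t}$ for doubly irrational $(s,t)$ as $Coag(\tilde{\hat{\Pi}}_{r,t},\tilde{\hat{\Pi}}_{s,r})$ through an intermediate rational $r$, rather than by your joint double right limit; the two definitions agree on the regularity event.

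The one step that does not hold as written is your justification of the absence of simultaneous mergers. Absolute continuity of the compensator of each component $L(n,K)$ does not imply that two distinct components cannot jump at the same instant: two indistinguishable copies of a Poisson process each have compensator $t$ and always jump together. Proposition~\ref{PropCompensator} only identifies the componentwise compensators and says nothing about the joint jump structure. What actually rules out simultaneous mergers is the structure of the driving point process ${\cal P}$: each of its atoms carries a single partition with a unique non-singleton block (a paint-box on $(\frac{\Delta Z_t}{Z_t},0,\ldots)$ or a $\pi_{\{i,j\}}$), ${\cal N}_{\nu}$ has at most one atom per jump time of $Z$, and ${\cal N}_{\sigma}$ almost surely has no multiple points and shares no time with ${\cal N}_{\nu}$. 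Since the proposition concerns an arbitrary $\Psi$ flow of partitions rather than the one built from ${\cal P}$, this property should then be transferred through the equality in law of the rational skeletons (the finite-dimensional marginals together with the cocycle determine the law of $(\hat{\Pi}_{s,t})_{s\leq t,\, s,t\in\mathbb{Q}}$) --- a point the paper itself leaves implicit in its closing ``enumeration of cases''.
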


\subsection{A limit theorem}\label{SubsectionLimitTh}
We now turn our attention to the continuity properties of the law of $(\hat{\Pi}_t,t\geq 0)$ according to its branching mechanism $\Psi$. To motivate this study we provide a convergence result for sequences of $\Psi$-CSBPs, but this requires first to introduce a suitable topology to compare c\`adl\`ag functions that possibly reach $\infty$ in finite time. At first reading, one can replace our topology with the usual Skorohod's topology and skip the next paragraph.\\
Our topology is the same as the one introduced in~\cite{CaballeroLambertBravo09}. Let $\bar{d}$ be a metric on $[0,+\infty]$ that makes this space homeomorphic to $[0,1]$. We denote by $\mathscr{D}([0,+\infty],[0,+\infty])$ the space of c\`adl\`ag functions $f : [0,+\infty] \rightarrow [0,+\infty]$ such that $f(t)=0$ (resp. $\infty$) implies $f(t+s)=0$ (resp. $\infty$) and $\lim\limits_{t\rightarrow\infty}f(t)$ exists in $[0,+\infty]$ and is equal to $f(\infty)$.\\
We define $\Lambda_{\infty}$ as the set of increasing homeomorphisms of $[0,+\infty]$ into itself. Let $\bar{d}_{\infty}$ be the following metric on $\mathscr{D}([0,+\infty],[0,+\infty])$
\begin{equation*}
\bar{d}_{\infty}(f,g) := 1 \wedge \inf_{\lambda \in \Lambda_{\infty}}\left(\sup_{s\geq 0}\bar{d}(f(s),g\circ\lambda(s))\vee\sup_{s\geq 0}|s-\lambda(s)|\right)
\end{equation*}

Let $(\Psi_m)_{m \in \mathbb{N}}$ be a sequence of branching mechanisms such that Equation (\ref{EquationPsi}) is fulfilled with the triplet $(\gamma_{m},\sigma_{m},\nu_{m})_{m\in\mathbb{N}}$ verifying the corresponding assumptions and denote by $\rZ^{m}$ a $\Psi_{m}$-CSBP started from $1$. Let $\Psi$ be another branching mechanism and $\rZ$ a $\Psi$-CSBP. We consider the following assumption.\vspace{-5pt}
\begin{assumption}\label{AssumptionPsi}
For all $u \in \mathbb{R}_+$, we have $\Psi_{m}(u) \rightarrow \Psi(u)$ as $m \rightarrow \infty$.
\end{assumption}
\begin{remark}
This assumption is equivalent with
\begin{equation*}
\gamma_m-\nu_{m}((1,\infty)) \underset{m\rightarrow\infty}{\longrightarrow} \gamma-\nu((1,\infty))
\end{equation*}
in $\mathbb{R}$ and
\begin{equation*}
\sigma^2_m\delta_0(dh)+(1\wedge h^2)\,\nu_m(dh) \underset{m\rightarrow\infty}{\longrightarrow} \sigma^2\delta_0(dh)+(1\wedge h^2)\,\nu(dh)
\end{equation*}
in the sense of weak convergence in the set $\mathscr{M}_f(\mathbb{R}_+)$ of finite measures on $\mathbb{R}_+$. See Theorem VII.2.9 and Remark VII.2.10 in~\cite{JacodShiryaev}.
\end{remark}
The following proposition yields a convergence result on sequences of CSBP, which is a consequence of the work of Caballero, Lambert and Uribe Bravo in~\cite{CaballeroLambertBravo09}.
\begin{proposition}\label{PropositionConvergenceCSBP}
Under Assumption \ref{AssumptionPsi}, we have
\begin{equation}
(\rZ^{m}_t,t\geq 0) \stackrel{(d)}{\underset{m\rightarrow\infty}{\longrightarrow}} (\rZ_t,t\geq 0)\mbox{ , in the sense of weak convergence in }\mathscr{D}([0,+\infty],[0,+\infty])
\end{equation}
\end{proposition}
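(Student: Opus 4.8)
The plan is to realize every CSBP through the Lamperti representation and then transfer the convergence from the driving Lévy processes to the CSBPs by a continuous-mapping argument, using the continuity of the Lamperti transform established in~\cite{CaballeroLambertBravo09}.

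First I would recall the Lamperti construction. For each $m$, let $X^m$ be a spectrally positive Lévy process started from $1$ whose Laplace exponent is $\Psi^m$, that is, $\mathbb{E}[e^{-\lambda X^m_t}] = e^{t\Psi^m(\lambda)}$, and similarly let $X$ be the process associated with $\Psi$. The Lamperti time change, which I denote by the map $L$, produces pathwise a $\Psi^m$-CSBP $Z^m = L(X^m)$ out of $X^m$ (and $Z = L(X)$), with the absorbing conventions at $0$ and $\infty$ that are built into the space $\mathscr{D}([0,+\infty],[0,+\infty])$. This is exactly the setting and the pathwise map analyzed in~\cite{CaballeroLambertBravo09}.

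Next I would use the Remark following Assumption~\ref{AssumptionPsi}: the pointwise convergence $\Psi^m \to \Psi$ is equivalent to the convergence of characteristics $\alpha^m-\nu_m((1,\infty)) \to \alpha-\nu((1,\infty))$ together with the weak convergence $\sigma_m^2\delta_0 + (1\wedge h^2)\nu_m \to \sigma^2\delta_0 + (1\wedge h^2)\nu$ in $\mathscr{M}_f(\mathbb{R}_+)$. By the functional limit theorem for processes with stationary independent increments (Theorem VII.2.9 in~\cite{JacodShiryaev}), this is precisely the condition ensuring the weak convergence $X^m \to X$ in the Skorohod space $\mathscr{D}(\mathbb{R}_+,\mathbb{R})$. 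I would then invoke the continuity of $L$ proved in~\cite{CaballeroLambertBravo09}, namely that $L$ maps spectrally positive Lévy paths continuously into $(\mathscr{D}([0,+\infty],[0,+\infty]),\bar{d}_{\infty})$ on a set of paths carrying the full law of $X$. Combining this with the weak convergence of $X^m$ and the continuous mapping theorem yields $Z^m = L(X^m) \to L(X) = Z$ in distribution in $\mathscr{D}([0,+\infty],[0,+\infty])$.

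The main obstacle is the continuous-mapping step, since the Lamperti map is not continuous everywhere: one must verify that the limit process $X$ almost surely falls in the continuity set of $L$. The delicate points are the boundary regimes, extinction (absorption at $0$) and explosion (absorption at $\infty$), where the time change may degenerate; it is exactly to accommodate these regimes that the metric $\bar{d}_{\infty}$ and the conventions defining $\mathscr{D}([0,+\infty],[0,+\infty])$ are chosen. Checking that $X$ avoids the discontinuity set almost surely is the technical content imported from~\cite{CaballeroLambertBravo09}, which is why the statement is attributed to their work and requires no new argument here.
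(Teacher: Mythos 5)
Your proposal follows essentially the same route as the paper: both realize the CSBPs via the Lamperti time change and transfer the convergence of the driving spectrally positive L\'evy processes through the continuity of that transform on $(\mathscr{D}([0,+\infty],[0,+\infty]),\bar{d}_{\infty})$, as established in~\cite{CaballeroLambertBravo09}. The only (inessential) difference is that the paper invokes the proof of Proposition 6 in~\cite{CaballeroLambertBravo09} to obtain an almost surely convergent coupling of the stopped L\'evy processes directly in $(\mathscr{D}([0,+\infty],[0,+\infty]),\bar{d}_{\infty})$, whereas you rederive the weak convergence of the L\'evy processes from Theorem VII.2.9 of~\cite{JacodShiryaev} and then apply the continuous mapping theorem, deferring to~\cite{CaballeroLambertBravo09} the (genuinely needed) verification that stopping at $0$ and passing to the compactified time scale behave well along the limit.
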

\begin{proof}
The proof of Proposition 6 in~\cite{CaballeroLambertBravo09} ensures that there exists a sequence $(\rY^\rmm_t, t \geq 0)$ of $\Psi_m$-L\'evy processes started from $1$ stopped whenever reaching $0$ that converges almost surely to a $\Psi$-L\'evy process $(\rY_t,t\geq 0)$ stopped whenever reaching $0$, where the convergence holds in $\mathscr{D}([0,+\infty],[0,+\infty])$. Furthermore, Proposition 5 in~\cite{CaballeroLambertBravo09} yields that $L^{-1}$ is continuous on $(\mathscr{D}([0,+\infty],[0,+\infty]),\bar{d}_{\infty})$ where $L$ is a time change due to Lamperti, see Subsection \ref{AppendixLamperti} for the definition. Therefore, we deduce that
$$ \bar{d}_{\infty}(L^{-1}(\rY^m),L^{-1}(\rY)) \underset{m\rightarrow\infty}{\longrightarrow} 0 \mbox{ a.s.}$$
Since $L^{-1}(\rY^m)$ (resp. $L^{-1}(\rY)$) is a $\Psi_m$ (resp. $\Psi$) CSBP for all $m\geq 1$, this concludes the proof.\cqfd
\end{proof}
Let $(\hat{\Pi}_{s,t}^m,0\leq s \leq t)$ be a $\Psi_m$ flow of partitions, for each $m\geq 1$.
\begin{theorem*}\textup{\textbf{\ref{TheoremLimit}}}
Suppose that
\begin{enumerate}[i)]
\item For all $u \in \mathbb{R}_+$, $\Psi_m(u) \rightarrow \Psi(u)$ as $m\rightarrow\infty$.
\item The branching mechanism $\Psi$ satisfies the Eve property.
\item $\Psi$ is not the Laplace exponent of a compound Poisson process.
\end{enumerate}
then
\begin{equation*}
(\hat{\Pi}^{m}_{t},t \geq 0) \stackrel{(d)}{\underset{m\rightarrow\infty}{\longrightarrow}}(\hat{\Pi}_{t},t \geq 0)
\end{equation*}
in $\mathbb{D}(\mathbb{R}_+,\mathscr{P}_{\infty})$.
\end{theorem*}
The proof of this theorem requires a preliminary lemma. Recall that $\rT_\epsilon:=\inf\{t\geq 0:\rZ_t\notin(\epsilon,1/\epsilon)\}$.
\begin{lemma}\label{LemmaCVCSBPkilled}
Under the hypothesis of the theorem, suppose that $\rZ^m \rightarrow \rZ$ almost surely as $m\uparrow\infty$ in $\mathscr{D}([0,+\infty],[0,+\infty])$, then for all $\epsilon \in (0,1)$ we have $\rT^m_\epsilon \underset{m\rightarrow\infty}{\longrightarrow} \rT_\epsilon$ a.s. and
\begin{equation*}
\big(\rZ^m_{t\wedge \rT^m_\epsilon},t \geq 0) \underset{m \rightarrow\infty}{\longrightarrow} \big(\rZ_{t\wedge \rT_\epsilon},t \geq 0\big)\mbox{ a.s.}
\end{equation*}
in $\mathbb{D}(\mathbb{R}_+,\mathbb{R}_+)$.
\end{lemma}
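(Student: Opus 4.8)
The plan is to realise $T(\epsilon)$ as a functional of the trajectory that is almost surely continuous at $Z$ for the topology $\bar{d}_{\infty}$, and then to transfer this continuity to the stopped paths. The one delicate point is that first-exit times from an open set are continuous functionals exactly at those trajectories that \emph{cross} the boundary rather than merely touch it; so the heart of the argument is to show that, almost surely, $Z$ crosses the boundary of $(\epsilon,1/\epsilon)$ strictly at time $T(\epsilon)$. Throughout I would work on the almost sure event $\{Z^m\to Z \text{ in } \bar{d}_{\infty}\}$ supplied by the hypothesis.

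First I would establish the \emph{strict-crossing} property: almost surely $T(\epsilon)=\inf\{t\geq 0: Z_t\notin[\epsilon,1/\epsilon]\}$, i.e. the first exit from the open interval coincides with the first entrance into the open complement $[0,\epsilon)\cup(1/\epsilon,+\infty]$. I would distinguish the two boundaries. Since $\Psi$ is the Laplace exponent of a spectrally positive L\'evy process, $Z$ has no negative jumps and can reach the lower level $\epsilon<1$ only continuously; but reaching a level below the starting point forces the underlying L\'evy process to be non-monotone, hence either of infinite variation or of finite variation with strictly negative drift, and in either case $0$ is regular for $(-\infty,0)$. Via the Lamperti time change and the strong Markov property applied at the hitting time of $\epsilon$, the process therefore enters $(0,\epsilon)$ immediately, so the crossing of $\epsilon$ is strict. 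If instead $Z$ exits through the upper level $1/\epsilon$, it does so either by a jump---and then, the overshoot of a spectrally positive process above a fixed level having no atom at $0$, it lands strictly above $1/\epsilon$ almost surely---or by creeping up continuously, in which case a symmetric regularity argument yields a strict upward crossing. Hence the exit is almost surely a strict crossing. This is the step I expect to be the main obstacle, as it rests on these pathwise regularity properties of spectrally positive processes (the standing hypotheses on $\Psi$ being available throughout).

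Granting strict crossing, the convergence $T^m(\epsilon)\to T(\epsilon)$ follows from two soft semicontinuity facts. On the one hand, $T(\epsilon)$ is the hitting time of the closed set $[0,\epsilon]\cup[1/\epsilon,+\infty]$, and hitting times of closed sets are lower semicontinuous under $\bar{d}_{\infty}$-convergence, so $\liminf_m T^m(\epsilon)\geq T(\epsilon)$. On the other hand, by strict crossing there are times arbitrarily close to $T(\epsilon)$ at which $Z$ lies strictly outside $[\epsilon,1/\epsilon]$; writing $\lambda_m$ for the time change realising $\bar{d}_{\infty}(Z,Z^m)\to 0$ and using that the levels $\epsilon,1/\epsilon$ are interior to $(0,+\infty)$, where $\bar{d}$ is equivalent to the Euclidean metric, the approximating paths $Z^m$ are also strictly outside $[\epsilon,1/\epsilon]$ at nearby times for $m$ large, whence $\limsup_m T^m(\epsilon)\leq T(\epsilon)$. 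Combining the two inequalities gives $T^m(\epsilon)\to T(\epsilon)$ almost surely, and in particular $T^m(\epsilon)<\infty$ for $m$ large.

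Finally, I would deduce the convergence of the stopped processes. On $[0,T(\epsilon)]$ the path $Z$ remains in the compact $[\epsilon,1/\epsilon]\subset(0,+\infty)$, on which $\bar{d}$ is equivalent to the usual metric; hence the $\bar{d}_{\infty}$-convergence of $Z^m$ to $Z$ restricts to ordinary Skorohod convergence on a neighbourhood of $[0,T(\epsilon)]$ in $\mathbb{D}(\mathbb{R}_+,\mathbb{R}_+)$. Since $T^m(\epsilon)\to T(\epsilon)$ and the crossing is strict, stopping at these converging times preserves Skorohod convergence, giving $(Z^m_{t\wedge T^m(\epsilon)},t\geq 0)\to(Z_{t\wedge T(\epsilon)},t\geq 0)$ in $\mathbb{D}(\mathbb{R}_+,\mathbb{R}_+)$ almost surely, as claimed.
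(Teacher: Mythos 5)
Your overall strategy matches the paper's: reduce the lemma to a deterministic continuity statement for the stopped-path map, valid at trajectories satisfying a strict-crossing property at the exit time, and then verify that the trajectories of $Z$ enjoy this property almost surely (the paper's conditions (ii)--(iv), checked via the Lamperti representation and the hypothesis that $\Psi$ is not the Laplace exponent of a compound Poisson process, much as you do with your overshoot and regularity arguments). Two points are glossed over, and the second is a genuine gap. First, the ``soft'' lower-semicontinuity claim $\liminf_m T^m(\epsilon)\geq T(\epsilon)$ is not automatic for exit times under Skorohod convergence: one must also rule out that $Z$ approaches the boundary through left limits strictly before $T(\epsilon)$, i.e.\ that $\inf_{s\leq r}Z_s=\epsilon$ for some $r<T(\epsilon)$ without the value ever being attained. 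The paper secures this (its condition (iv)) by observing that the running infimum of $Z$ is the Lamperti time change of the negative of a subordinator and hence admits no fixed discontinuity; your strict-crossing discussion does not cover it. Second, and more seriously, the final assertion that ``stopping at these converging times preserves Skorohod convergence'' is false in general when $T(\epsilon)$ is a jump time of $Z$: from $f_m\to f$ in Skorohod and $t_m\to t$ one cannot conclude $f_m(\cdot\wedge t_m)\to f(\cdot\wedge t)$ if $f$ jumps at $t$. The bulk of the paper's proof (its Step 2, especially Step 2b) consists precisely in showing that, under the strict-crossing conditions, the Skorohod reparametrisations $\lambda_m$ must send $T(\epsilon)$ exactly onto $T^m(\epsilon)$ --- any mismatch would keep $\bar{d}_{\infty}(Z,Z^m)$ bounded below by a quantity built from the gaps between $\sup_{[0,T(\epsilon))}Z$, $Z_{T(\epsilon)}$, $\inf_{[0,T(\epsilon))}Z$ and the levels $\epsilon$, $1/\epsilon$ --- and it is this exact alignment of the exit times that makes the stopped paths converge. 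You name the right ingredient (strict crossing) but the argument that it suffices is exactly the step you omit.
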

We postpone the proof of this lemma to Section \ref{Proofs}. We are now ready to prove the theorem.
\begin{proof}(Theorem \ref{TheoremLimit})
The definition of the topology on $\mathscr{P}_{\infty}$ entails that it suffices to show that for every $n\in\mathbb{N}$,
\begin{equation}\label{EqCVFlow}
\big(\hat{\Pi}^{m,[n]}_{t},t \geq 0\big) \stackrel{(d)}{\underset{m\rightarrow\infty}{\longrightarrow}}\big(\hat{\Pi}^{[n]}_{t},t \geq 0\big)
\end{equation}
in $\mathbb{D}(\mathbb{R}_+,\mathscr{P}_n)$. So we fix $n\in\mathbb{N}$. Our proof consists in showing that
\begin{enumerate}[a)]
\item\label{ItemCVFlow2} $\mathbb{P}\Big(\hat{\Pi}^{[n]}_{\rT_\epsilon}=\tun_{[n]}\Big) \rightarrow 1$ as $\epsilon\downarrow 0$.
\item\label{ItemCVFlow} $\Big(\hat{\Pi}^{m,[n]}_{t\wedge \rT^m_\epsilon},t \geq 0\Big) \stackrel{(d)}{\underset{m\rightarrow\infty}{\longrightarrow}}\Big(\hat{\Pi}^{[n]}_{t\wedge \rT_\epsilon},t \geq 0\Big)
$ in $\mathbb{D}(\mathbb{R}_+,\mathscr{P}_n)$ for every $\epsilon \in (0,1)$.
\end{enumerate}
To see that those two properties imply the asserted convergence, observe that they entail 
\begin{equation*}
\mathbb{P}\big(\hat{\Pi}^{m,[n]}_{\rT_\epsilon}=\tun_{[n]}\big) \rightarrow 1
\end{equation*}
as $m\rightarrow \infty$ and $\epsilon\downarrow 0$. Since $\tun_{[n]}$ is an absorbing state for the processes $\hat{\Pi}^{[n]}$ and $\hat{\Pi}^{m,[n]}$, we deduce that
\begin{eqnarray*}
\mathbb{P}\big[(\hat{\Pi}^{[n]}_{t\wedge \rT_\epsilon},t \geq 0) =(\hat{\Pi}^{[n]}_{t},t \geq 0)\big] &\rightarrow& 1\\
\mathbb{P}\Big[\big(\hat{\Pi}^{m,[n]}_{t\wedge \rT^m_\epsilon},t \geq 0\big) = \big(\hat{\Pi}^{m,[n]}_{t},t \geq 0\big)\Big] &\rightarrow& 1
\end{eqnarray*}
as $m\rightarrow \infty$ and $\epsilon\downarrow 0$. Hence, the asserted convergence follows. We now prove \ref{ItemCVFlow2}) and \ref{ItemCVFlow}).\vspace{3pt}\\
The first property \ref{ItemCVFlow2}) derives from Theorem \ref{TheoremEquivalence} and the fact that $\rT_\epsilon \rightarrow \rT$ a.s. as $\epsilon \downarrow 0$.\\
Let us prove the second property \ref{ItemCVFlow}). Note that this is sufficient to show that for every $\epsilon\in(0,1)$
\begin{equation*}
\Big\{\big(\rL_{t\wedge \rT^m_\epsilon}^m(n,K),t \geq 0\big);K\subset [n],\#K\geq 2\Big\} \underset{m\uparrow\infty}{\stackrel{(d)}{\longrightarrow}} \Big\{\big(\rL_{t\wedge \rT_\epsilon}(n,K),t \geq 0\big);K\subset [n],\#K\geq 2\Big\}
\end{equation*}
in $\mathbb{D}(\mathbb{R}_+,\mathbb{R}_+^{d_n})$, where $\{(\rL_t(n,K),t\geq 0); K \subset [n],\#K\geq 2\}$ are the counting processes whose jump rates have been characterized in Subsection \ref{SubsectionJumpRates}. Indeed the knowledge of these processes allows to determine the elementary reproduction events $\hat{\Pi}_{t-,t}^{[n]}$, and so, is sufficient to recover the process $(\hat{\Pi}_t^{[n]},t\geq 0)$. Obviously, this also holds when the processes are stopped at $\rT_\epsilon$.\\
Let $\rZ^m$ (resp. $\rZ$) be a $\Psi_m$-CSBP (resp. $\Psi$-CSBP) such that
$\rZ^m \rightarrow \rZ$ almost surely as $m\uparrow\infty$ in $\mathscr{D}([0,+\infty],[0,+\infty])$. Fix $\epsilon \in (0,1)$, we know from Lemma \ref{LemmaCVCSBPkilled} that
\begin{equation*}
\big(\rZ^m_{t\wedge \rT^m_\epsilon},t \geq 0\big) \underset{m \rightarrow\infty}{\longrightarrow} \big(\rZ_{t\wedge \rT_\epsilon},t \geq 0\big)\mbox{ a.s.}
\end{equation*}
in $\mathbb{D}(\mathbb{R}_+,\mathbb{R}_+)$. Using Proposition \ref{PropContinuityLambda} and the definition of the stopping times $\rT^m_\epsilon$, we deduce that for each $2 \leq k \leq n$
\begin{equation}\label{EqCVCompensators}
\Big(\int_{0}^{t\wedge \rT^{m}_\epsilon}\lambda_{n,k}(\rZ^m_{s-},\Psi^m)ds, t \geq 0\Big) \underset{m\uparrow\infty}{\longrightarrow} \Big(\int_{0}^{t\wedge \rT_\epsilon}\lambda_{n,k}(\rZ_{s-},\Psi)ds, t \geq 0\Big)\;\mbox{a.s.}
\end{equation}
in $\mathbb{D}(\mathbb{R}_+,\mathbb{R}_+)$.\\
From each $\Psi_m$-CSBP $\rZ^m$, we define a flow of partitions and consider the corresponding $d_n$-dimensional counting process
\begin{equation*}
\Big\{\big(\rL_{t\wedge \rT^m_\epsilon}^m(n,K),t \geq 0\big);K \subset [n],\#K\geq 2\Big\}
\end{equation*}
We do the same from the $\Psi$-CSBP $\rZ$ and define
\begin{equation*}
\Big\{\big(\rL_{t\wedge \rT_\epsilon}(n,K),t \geq 0\big);K \subset [n],\#K\geq 2\Big\}
\end{equation*}
We have shown in Proposition \ref{PropCompensator} that this process is a $d_n$-dimensional pure-jump semimartingale whose predictable compensator is the $d_n$-dimensional process
\begin{equation*}
\Big\{\big(\int_{0}^{t\wedge \rT_\epsilon}\lambda_{n,\#K}(\rZ_{s-},\Psi)ds, t \geq 0\big);K\subset [n],\#K\geq 2\Big\}
\end{equation*}
and similarly for $\{(\rL_{t\wedge \rT^m_\epsilon}^m(n,K),t \geq 0);K \subset [n],\#K\geq 2\}$, for each $m\geq 1$.
From Theorem VI.4.18 in~\cite{JacodShiryaev} we deduce that the collection of $d_n$-dimensional processes $\{(\rL_{t\wedge \rT^m_\epsilon}^m(n,K),t \geq 0);K\subset [n],\#K\geq 2\}_{m\geq 1}$ is tight $\mathbb{D}(\mathbb{R}_+,\mathbb{R}_+^{d_n})$. Indeed, in the notation of~\cite{JacodShiryaev} conditions (i) and (ii) are trivially verified, while condition (iii) is a consequence of Equation (\ref{EqCVCompensators}). Furthermore, this last equation ensures that any limit of a subsequence of the collection of $d_n$-dimensional semimartingales is a $d_n$-dimensional semimartingale whose predictable compensator is
\begin{equation*}
\Big\{\big(\int_{0}^{t\wedge \rT_\epsilon}\lambda_{n,\#K}(\rZ_{s-},\Psi)ds, t \geq 0\big);K\subset [n],\#K\geq 2\Big\}
\end{equation*}
which characterizes uniquely the semimartingale $\{(\rL_{t\wedge \rT_\epsilon}(n,K),t \geq 0);K\subset [n],\#K\geq 2\}$ (see for instance Theorem IX.2.4 in~\cite{JacodShiryaev}). This ensures the following convergence
\begin{equation}\label{EqCVCounting}
\Big\{\big(\rL_{t\wedge \rT^m_\epsilon}^m(n,K),t \geq 0\big);K\subset [n],\#K\geq 2\Big\} \underset{m\uparrow\infty}{\stackrel{(d)}{\longrightarrow}} \Big\{\big(\rL_{t\wedge \rT_\epsilon}(n,K),t \geq 0\big);K\subset [n],\#K\geq 2\Big\}
\end{equation}
in $\mathbb{D}(\mathbb{R}_+,\mathbb{R}_+^{d_n})$.\cqfd
\end{proof}

\section{From a flow of subordinators to the lookdown representation}\label{SectionPathwise}
\subsection{Connection with generalized Fleming-Viot and motivation}
In~\cite{Labbe11}, we considered the class of generalized Fleming-Viot processes: they are Markov processes that take values in the set of probability measures on a set of genetic types, say $[0,1]$, and that describe the evolution of the asymptotic frequencies of genetic types in a population of constant size $1$. A flow of generalized Fleming-Viot processes $(\uprho_{s,t},-\infty < s \leq t < +\infty)$ is a consistent collection of generalized Fleming-Viot processes and is completely encoded by a stochastic flow of bridges, see the article~\cite{BertoinLeGall-1} of Bertoin and Le Gall. This object is similar to the stochastic flow of subordinators (restricted to an initial population $[0,1]$), but while in the former the population size is constant in the latter it varies as a CSBP. This will be a major difficulty in the present work. We introduced the notion of ancestral type for a generalized Fleming-Viot process, similarly as we have identified the ancestors of a MVBP. Then we restricted our study to the following two subclasses of generalized Fleming-Viot:
\begin{itemize}
\item \emph{Eves - extinction}: the ancestral types with a positive frequency are finitely many at any positive time almost surely and any two of them do never get extinct simultaneously. Thus we can order them by decreasing extinction times, hence obtaining a sequence $(\rme^{i})_{i\geq 1}$ called the Eves.
\item \emph{Eves - persistent}: the ancestral types do never become extinct and can be ordered according to the asymptotic behaviours of their progenies as $t$ tends to $\infty$, we called the corresponding sequence $(\rme^i)_{i\geq 1}$ the Eves as well.
\end{itemize}
At each time $s \in \mathbb{R}$, we defined the sequence of Eves $(\rme^{i}_s)_{i\geq 1}$ as the ancestral types of the generalized Fleming-Viot process $(\uprho_{s,t},t\in [s,\infty))$. Then, expressing the genealogical relationships between those Eves in terms of partitions of integers we obtained a stochastic flow of partitions $(\hat{\Pi}_{s,t},-\infty < s \leq t < \infty)$. These two objects catch all the information encoded by the flow of generalized Fleming-Viot processes: $(\rme^{i}_s)_{i\geq 1}$ is the sequence of types carried by the population started at time $s$ while $(\hat{\Pi}_{s,t},t\in[s,\infty))$ tells how the frequencies of these types evolve in time. Additionally, it provides a pathwise connection with the lookdown representation: the main result of~\cite{Labbe11} asserts that for every time $s\in\mathbb{R}$, the process of limiting empirical measures $\mathscr{E}_s(\hat{\Pi},(\rme^{i}_{s})_{i\geq 1})$ is almost surely equal to $(\uprho_{s,t},t\in[s,\infty))$.\vspace{3pt}

Many connections exist between generalized Fleming-Viot processes and $\Psi$-MVBP: in~\cite{BertoinLeGall-0}, Bertoin and Le Gall proved that the Bolthausen-Sznitman coalescent is the genealogy of the Neveu branching process, in~\cite{Article7} Birkner et al. exhibited a striking connection between $\alpha$-stable branching processes and Beta$(2-\alpha,\alpha)$ Fleming-Viot processes and in~\cite{BertoinLeGall-3} Bertoin and Le Gall proved that a generalized Fleming-Viot process has a behaviour locally (\textit{i.e.} for a small subpopulation) identical with a branching process. It is thus natural to expect that a result similar to the one stated in~\cite{Labbe11} holds in the present setting of branching processes.\\
For our construction to hold, we need the following assumptions:
\begin{itemize}
\item $\Psi$ is conservative, that is, the $\Psi$-CSBP does not reach $\infty$ in finite time a.s.
\item The branching mechanism $\Psi$ enjoys the Eve property.
\end{itemize}
These assumptions ensure the existence of the ordering of ancestors presented in Subsection \ref{SubsectionAncestors} in three different cases: \Extinction, \InfNo\ and \InfPos.\vspace{3pt}

From now on, we consider a flow of $\Psi$-MVBP $(\rmm_{s,t},0 \leq s \leq t < \rT)$ defined from a $\Psi$ flow of subordinators. As explained in Subsection \ref{SubsectionMVBP}, we can assume that each process $(\rmm_{s,t},t\in[s,\rT))$ is c\`adl\`ag. Note that in this section, we use $\rT$ instead of $\rT^\rS$ for the lifetime of the flow and we set $\rZ_t := \rmm_{0,t}([0,1])$ for all $t\in [0,\rT)$ instead of the notation $\rS_t$. Finally, recall the definition of the probability measure $\rmr_{s,t}$ via the rescaling
\begin{equation*}
\rmr_{s,t}(dx) := \frac{\rmm_{s,t}(\rZ_s\cdot dx)}{\rZ_t}
\end{equation*}
In the next subsection, we define the Eves process $(\rme^{i}_s,s \in [0,\rT))_{i\geq 1}$ and a $\Psi$-flow of partitions $(\hat{\Pi}_{s,t},0 \leq s \leq t < \rT)$ pathwise from the flow of $\Psi$-MVBP. In particular, we prove Theorem \ref{TheoremFoP2}. In the last subsection, we introduce for all $s \in [0,\rT)$, the lookdown process $(\upxi_{s,t}(i),t \in [s,\rT))_{i\geq 1} := \mathscr{L}_s(\hat{\Pi},(\rme^i_s)_{i\geq 1})$ and define the measure-valued process $(\Xi_{s,t},t \in [s,\rT)) := \mathscr{E}_s(\hat{\Pi},(\rme^i_s)_{i\geq 1})$. The rest of that subsection is devoted to the proof of the following result.
\begin{theorem*}\textup{\textbf{\ref{ThLookdown}}}
The flow of subordinators can be uniquely decomposed into two random objects: the Eves process $(\rme^{i}_s,s\in [0,\rT))$ and the flow of partitions $(\hat{\Pi}_{s,t},0\leq s \leq t < \rT)$.
\begin{enumerate}[i)]
\item \textbf{Decomposition}. For each $s \in \mathbb{R}$, a.s. $\mathscr{E}_s(\hat{\Pi},(\rme^{i}_{s})_{i\geq 1}) = (\rmr_{s,t},t \in [s,\rT))$
\item \textbf{Uniqueness}. Let $(\rH_{s,t},0 \leq s \leq t < \rT)$ be a $\Psi$ flow of partitions defined from the $\Psi$-CSBP $\rZ$, and for each $s \in [0,\rT)$, consider a sequence $(\upchi_s(i))_{i\geq 1}$ of r.v. taking distinct values in $[0,1]$. If for each $s \in [0,\rT)$, a.s. $\mathscr{E}_{s}(\rH,(\upchi_s(i))_{i\geq 1}) = (\rmr_{s,t},t \in [s,\rT))$ then\begin{itemize}
\item For each $s \in [0,\rT)$, a.s. $(\upchi_s(i))_{i\geq 1} = (\rme^{i}_s)_{i\geq 1}$.
\item Almost surely $\rH = \hat{\Pi}$.
\end{itemize}
\end{enumerate}
\end{theorem*}

\subsection{Eves process and flow of partitions}
For each $s\in[0,\rT)$, the process $(\rmm_{s,t},t\in[s,\rT))$ is a $\Psi$-MVBP started from the Lebesgue measure on $[0,\rZ_s]$. Therefore, we introduce the sequence $(\rme^{i}_s)_{i\geq 1}$ defined as its sequence of Eves (according to the definition given in Subsection \ref{SubsectionAncestors}) but rescaled by the mass $\rZ_s$ in order to obtain r.v. in $[0,1]$. The process $(\rme^{i}_s,s\in[0,\rT))_{i\geq 1}$ is then called the \textit{Eves process}.\\
A motivation for the rescaling of the Eves by the mass $\rZ_s$ is given by the following lemma.
\begin{lemma}\label{LemmaIndepEves}
For every $s \in [0,\rT)$, the sequence $(\rme^{i}_s)_{i\geq 1}$ is i.i.d. uniform$[0,1]$ and independent of the past of the flow until time $s$, that is, of $\sigma\{\rmm_{u,v},0 \leq u \leq v \leq s\}$.
\end{lemma}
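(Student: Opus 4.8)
The plan is to condition on the past $\mathcal{F}_s := \sigma\{m_{u,v}, 0\leq u\leq v\leq s\}$ and to exploit the independence of the increments of the flow of subordinators. Since $S_{u,v}(x)=m_{u,v}([0,x])$, the $\sigma$-field $\mathcal{F}_s$ coincides with $\sigma\{S_{u,v}, 0\leq u\leq v\leq s\}$; in particular $Z_s=S_{0,s}(1)$ is $\mathcal{F}_s$-measurable. The cocycle property together with the independence of the increments of the flow over disjoint time intervals shows that the future increment process $W:=(S_{s,s+t}, t\geq 0)$ is independent of $\mathcal{F}_s$: the past is a measurable function of increments over sub-intervals of $[0,s]$, the future a measurable function of increments over sub-intervals of $[s,\infty)$, and a monotone-class argument upgrades the finite-dimensional independence built into the definition of the flow into independence of the two generated $\sigma$-fields. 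By construction the rescaled Eves form a measurable functional $(\mathbf{e}^i_s)_{i\geq 1}=G(Z_s,W)$, where $G(z,\cdot)$ computes the sequence of Eves of the $\Psi$-MVBP $(m_{s,t},t\in[s,T))$ started from the Lebesgue measure on $[0,z]$ and built from the increment process $W$, and then divides each of them by $z$.

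The crux is then the following statement, deterministic in $z$: for every fixed $z>0$, $G(z,W)$ is a sequence of i.i.d. uniform$[0,1]$ random variables. Granting this, I would invoke the freezing lemma: since $W$ is independent of $\mathcal{F}_s$ while $Z_s$ is $\mathcal{F}_s$-measurable, the conditional law of $G(Z_s,W)$ given $\mathcal{F}_s$ equals, almost surely, the unconditional law of $G(z,W)$ evaluated at $z=Z_s$. As this law does not depend on $z$ --- it is always that of an i.i.d. uniform$[0,1]$ sequence --- I conclude that $(\mathbf{e}^i_s)_{i\geq 1}$ is i.i.d. uniform$[0,1]$ and independent of $\mathcal{F}_s$, which is exactly the claim.

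It remains to prove the fixed-$z$ statement, which is where the real work lies. Under the standing assumptions of this section ($\Psi$ conservative and satisfying the Eve property), Theorem \ref{ThEves} guarantees that the Eves of a $\Psi$-MVBP started from the Lebesgue measure on $[0,z]$ are well-defined. To identify their law I would transfer Proposition \ref{PropositionLawAncestors}, stated for an initial population normalized on $[0,1]$, by the spatial rescaling $\theta_z:[0,z]\to[0,1]$, $x\mapsto x/z$. The pushforward of $(m_{s,t},t\in[s,T))$ by $\theta_z$ is again a $\Psi$-MVBP (disjoint spatial regions evolve as independent CSBP, and this property is preserved under $\theta_z$), whose sequence of Eves is precisely $\theta_z$ applied to the Eves of the original process, i.e. the rescaled Eves $G(z,W)$. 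Since the lookdown initial types underlying this rescaled MVBP are i.i.d. uniform$[0,1]$ irrespective of the total mass, Proposition \ref{PropositionLawAncestors} applies and yields that $G(z,W)$ is i.i.d. uniform$[0,1]$, completing the argument.

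I expect the main obstacle to be the careful transfer of Proposition \ref{PropositionLawAncestors} across the spatial rescaling $\theta_z$: in particular, checking that the Eves, defined through the persistence/predominance ordering of the ancestors, are equivariant under $\theta_z$, and that the quantity $G$ is genuinely a measurable function of the pair $(z,W)$ alone, so that the freezing lemma may legitimately be applied. The independence of the future increments from $\mathcal{F}_s$ is conceptually straightforward but still demands the monotone-class upgrade from the finite-dimensional independence of the flow.
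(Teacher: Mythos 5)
Your proposal is correct and follows essentially the same route as the paper, whose proof is a two-line sketch stating that an "easy adaptation of Proposition \ref{PropositionLawAncestors}" shows that, conditionally on $\sigma\{m_{u,v},0\leq u\leq v\leq s\}$, the sequence $(Z_s\mathbf{e}^i_s)_{i\geq 1}$ is i.i.d. uniform$[0,Z_s]$. You have simply made explicit the ingredients the paper leaves implicit: the independence of the future increments of the flow from the past, the freezing-lemma conditioning on $Z_s$, and the spatial rescaling needed to transfer Proposition \ref{PropositionLawAncestors} from $[0,1]$ to $[0,z]$.
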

\begin{proof}
An easy adaptation of Proposition \ref{PropositionLawAncestors} shows that, conditional on $\sigma\{\rmm_{u,v},0 \leq u \leq v \leq s\}$, the sequence $(\rZ_s\rme^{i}_s)_{i\geq 1}$ is i.i.d. uniform$[0,\rZ_s]$. Therefore, the sequence $(\rme^{i}_s)_{i\geq 1}$ is i.i.d. uniform$[0,1]$ and independent of $\sigma\{\rmm_{u,v},0 \leq u \leq v \leq s\}$.\cqfd
\end{proof}
We now express the genealogical relationships between the Eves in terms of partitions. It is convenient to define the process $\rF_{s,t}$ as the distribution function of $\rmr_{s,t}$ for all $0 \leq s \leq t < \rT$. One easily shows that this process is a bridge in the sense of~\cite{BertoinLeGall-1}: it is a non-decreasing random process from $0$ to $1$ with exchangeable increments. We define an exchangeable random partition $\hat{\Pi}_{s,t}$ for all $0 \leq s \leq t < \rT$ thanks to the following equivalence relation
\begin{equation*}
i \stackrel{\hat{\Pi}_{s,t}}{\sim} j \Leftrightarrow \rF^{-1}_{s,t}(\rme^{i}_t)=\rF^{-1}_{s,t}(\rme^{j}_t)
\end{equation*}
for all integers $i,j$.
\begin{proposition}\label{PropCompositionRule}
For all $0 \leq s \leq t < \rT$, almost surely $(\rF_{s,t},(\rme^{i}_t)_{i\geq 1},(\rme^{i}_s)_{i\geq 1})$ follows the composition rule, that is:
\begin{itemize}
\item $(\rme^{i}_t)_{i\geq 1}$ is i.i.d. uniform$[0,1]$.
\item $\hat{\Pi}_{s,t}$ is an exchangeable random partition independent of $(\rme^{i}_s)_{i\geq 1}$. Denote its blocks by $(A_j)_{j\geq 1}$ in the increasing order of their least elements. Then, for each $j$ and any $i\in A_j$, we have $\rme^{j}_s = \rF_{s,t}^{-1}(\rme^{i}_t)$.
\item $(\rme^{i}_s)_{i\geq 1}$ is i.i.d. uniform$[0,1]$.
\end{itemize}
\end{proposition}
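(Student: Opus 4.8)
The plan is to recognise $\hat{\Pi}_{s,t}$ as the paint-box partition attached to the bridge $F_{s,t}$ sampled at the points $(\mathbf{e}^{i}_t)_{i\geq 1}$, and then to read off the three assertions from the composition rule for bridges together with the persistence/predominance characterisation of the Eves. First I would record that, since $m_{s,t}(dx)=d_xS_{s,t}(x)$, one has $F_{s,t}(x)=S_{s,t}(Z_s x)/Z_t$ for $x\in[0,1]$, so that $F_{s,t}^{-1}(\mathbf{e}^{i}_t)=S_{s,t}^{-1}(Z_t\mathbf{e}^{i}_t)/Z_s$ and the defining relation $i\stackrel{\hat{\Pi}_{s,t}}{\sim}j\Leftrightarrow F_{s,t}^{-1}(\mathbf{e}^{i}_t)=F_{s,t}^{-1}(\mathbf{e}^{j}_t)$ is exactly the paint-box relation of Equation~(\ref{EquationPB}) for the subordinator $S_{s,t}$ with the labels $\mathbf{e}^{i}_t$. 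The first and third bullets are then immediate from Lemma~\ref{LemmaIndepEves}: applied at time $t$ (resp.\ at time $s$) it gives that $(\mathbf{e}^{i}_t)_{i\geq 1}$ (resp.\ $(\mathbf{e}^{i}_s)_{i\geq 1}$) is i.i.d.\ uniform$[0,1]$; applied at time $t$ it moreover shows that $(\mathbf{e}^{i}_t)_{i\geq 1}$ is independent of $\sigma\{m_{u,v},0\leq u\leq v\leq t\}$, and hence of $F_{s,t}$, which is measurable with respect to that $\sigma$-field.

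For the structural content of the middle bullet I would invoke the composition rule for bridges of Bertoin and Le Gall~\cite{BertoinLeGall-1}. Since $F_{s,t}$ is a bridge (as noted just before the proposition) and is independent of the i.i.d.\ uniform family $(\mathbf{e}^{i}_t)_{i\geq 1}$, that rule yields at once that $\hat{\Pi}_{s,t}$ is an exchangeable partition whose asymptotic frequencies are the rescaled jumps of $S_{s,t}$, that the sequence of distinct values $(W_j)_{j\geq 1}:=(F_{s,t}^{-1}(\mathbf{e}^{i}_t))$ listed in the order of the least elements of the blocks of $\hat{\Pi}_{s,t}$ is i.i.d.\ uniform$[0,1]$, and that $\hat{\Pi}_{s,t}$ is independent of $(W_j)_{j\geq 1}$. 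Everything then reduces to the pathwise identification $W_j=\mathbf{e}^{j}_s$ for all $j$: once it holds, the independence of $\hat{\Pi}_{s,t}$ from $(\mathbf{e}^{i}_s)_{i\geq 1}$ follows by transporting the independence of $\hat{\Pi}_{s,t}$ and $(W_j)_{j\geq 1}$ through this identity.

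The heart of the argument, and the step I expect to be the main obstacle, is this identification $W_j=\mathbf{e}^{j}_s$, which asserts that the ordering of the hit time-$s$ ancestors by least element of their block coincides with their ordering by persistence/predominance. I would prove it by induction on $j$, treating the \textbf{[Extinction]} and \textbf{[Infinite lifetime]} cases of the classification separately, the mixed case being a combination of the two. The least element of $A_j$ is the smallest index $i$ outside $A_1\cup\cdots\cup A_{j-1}$, that is, the most persistent (resp.\ most predominant) time-$t$ Eve that does not descend from $\mathbf{e}^{1}_s,\ldots,\mathbf{e}^{j-1}_s$, because the time-$t$ Eves are themselves enumerated by decreasing persistence/predominance. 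The crucial point is a heredity property: the most predominant (resp.\ most persistent) individual of the subpopulation obtained by discarding the progenies of $\mathbf{e}^{1}_s,\ldots,\mathbf{e}^{j-1}_s$ necessarily descends from $\mathbf{e}^{j}_s$. In the predominance case this follows from the recursive characterisation of Lemma~\ref{LemmaRecursiveEves}: since $\mathbf{e}^{j}_s$ predominates in that subpopulation, a time-$t$ ancestor carrying a non-vanishing fraction of it as $v\uparrow T$ must lie in the progeny of $\mathbf{e}^{j}_s$, for otherwise a fraction bounded away from $0$ would be carried outside the progeny of $\mathbf{e}^{j}_s$, contradicting predominance; in the persistence case one argues identically with the last extinction time of the subpopulation, which by Lemma~\ref{LemmaDistributionAbsContinuous} is attained by a single ancestor. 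Thus the least-index time-$t$ Eve of the subpopulation descends from $\mathbf{e}^{j}_s$, and conversely every time-$t$ Eve descending from $\mathbf{e}^{j}_s$ lies in $A_j$, so that $F_{s,t}^{-1}(\mathbf{e}^{i}_t)=\mathbf{e}^{j}_s$ for every $i\in A_j$; this closes the induction. Combining this identity with the output of the composition rule delivers the three bullets.
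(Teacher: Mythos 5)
Your argument is correct and follows essentially the route the paper itself takes: the paper does not write out a proof but defers to Proposition 5.2 of~\cite{Labb�11}, whose argument is precisely the combination you describe — the Bertoin--Le Gall composition rule for the bridge $F_{s,t}$ sampled at the i.i.d.\ uniform points $(\mathbf{e}^{i}_t)_{i\geq 1}$ (made legitimate by Lemma \ref{LemmaIndepEves}), followed by the identification of the distinguished values $F_{s,t}^{-1}(\mathbf{e}^{\min A_j}_t)$ with $\mathbf{e}^{j}_s$ via the persistence/predominance ordering, treating the \textbf{[Extinction]} and \textbf{[Infinite lifetime]} cases separately. Your reconstruction of the key heredity step (the most persistent/predominant time-$t$ Eve outside the progenies of $\mathbf{e}^{1}_s,\ldots,\mathbf{e}^{j-1}_s$ must descend from $\mathbf{e}^{j}_s$) is the right content and is consistent with Lemmas \ref{LemmaDistributionAbsContinuous} and \ref{LemmaRecursiveEves}.
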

The proof of this proposition follows from very similar arguments to those developed in Section 5 of~\cite{Labbe11}, where the \emph{Eves - extinction case} corresponds here to the \Extinction\ case while the \emph{Eves - persistent case} corresponds to \InfNo\ here. Once again, the \InfPos\ case is obtained as a mixture of the previous two ones.
\begin{theorem*}\textup{\textbf{\ref{TheoremFoP2}}}
The collection of partitions $(\hat{\Pi}_{s,t},0 \leq s \leq t < \rT)$ defined from the flow of subordinators and the Eves process is a $\Psi$ flow of partitions.
\end{theorem*}
\begin{proof}
Fix $0 \leq r < s < t < \rT$. We know that for all integers $i,j$
\begin{equation*}
i \stackrel{\hat{\Pi}_{r,t}}{\sim} j \Leftrightarrow \rF^{-1}_{r,t}(\rme^{i}_t) = \rF^{-1}_{r,t}(\rme^{j}_t)
\end{equation*}
Recall that $\rF^{-1}_{r,t}(\rme^{i}_t) = \rF^{-1}_{r,s}\circ \rF^{-1}_{s,t}(\rme^{i}_t)$ a.s. and similarly for $j$. Proposition \ref{PropCompositionRule} shows that there exists an integer $k_i$ (resp. $k_j$) such that $\rF^{-1}_{s,t}(\rme^{i}_t) = \rme^{k_i}_s$ a.s. (resp. $j$ instead of $i$). Then we obtain that a.s.
\begin{equation*}
i \stackrel{\hat{\Pi}_{r,t}}{\sim} j \Leftrightarrow k_i \stackrel{\hat{\Pi}_{r,s}}{\sim} k_j
\end{equation*}
From the definition of the coagulation operator, we deduce that a.s. $\hat{\Pi}_{r,t} = \Coag(\hat{\Pi}_{s,t},\hat{\Pi}_{r,s})$.\\
Now we prove the property on the finite dimensional marginals via a recursion on $n$. Implicitly $f_i$ (resp. $g_i$) will denote a bounded Borel map from $\mathscr{P}_{\infty}$ (resp. $\mathbb{R}_+$) to $\mathbb{R}$ while $\phi$ will be a bounded Borel map from $[0,1]^{\mathbb{N}}$ to $\mathbb{R}$. For any sequence $0=t_0 < t_1 < t_2 < \ldots < t_n$, $\rH_{t_{i-1},t_i} := \mathscr{P}(\rS_{t_{i-1},t_i})$ will denote the random partitions obtained via independent paint-box schemes based on $\rS_{t_{i-1},t_i}$, with $i\in[n]$. In addition, we will consider a more general setting in which the flow of subordinators is taken at time $0$ with an initial population $[0,z]$ for a given $z > 0$ (whereas in this section we consider only the case $z=1$). Then we make use of $\mathbb{P}_z$ to emphasize the dependence on $z > 0$. We will prove that for any integer $n\geq 1$, for all $z > 0$, $0 < t_1 \ldots < t_n$, and all $f_1,\ldots,f_n,g_1,\ldots,g_n$ we have
\begin{eqnarray*}
&&\mathbb{E}_z\big[f_1(\hat{\Pi}_{0,t_1})g_1(\rZ_{t_1})\ldots f_n(\hat{\Pi}_{t_{n-1},t_{n}})g_n(\rZ_{t_{n}})\,\big|\,(\rme^{i}_0)_{i\geq 1}\big]\\
&=&\mathbb{E}_z\big[f_1(\rH_{0,t_1})g_1(\rZ_{t_1})\ldots f_n(\rH_{t_{n-1},t_n})g_n(\rZ_{t_n})\big]
\end{eqnarray*}
This identity will ensure the asserted distribution for finite dimensional marginals of $\hat{\Pi}$.\\
At rank $n=1$, we use Lemma \ref{LemmaIndepEves} to deduce that the sequence $(\rme^i_{t_1})_{i\geq 1}$ is independent from the subordinator $\rS_{0,t_1}$. Therefore, we can assume that $\rH_{0,t_1}$ is defined according to the paint-box scheme with this sequence of i.i.d. uniform$[0,1]$ and the subordinator $\rS_{0,t_1}$, that is, $\rH_{0,t_1} = \hat{\Pi}_{0,t_1}$. It suffices to prove that $\hat{\Pi}_{0,t_1}$ and $\rZ_{t_1}$ are independent from the sequence $(\rme^{i}_0)_{i\geq 1}$. The first independence comes from Proposition \ref{PropCompositionRule}. The second independence can be obtained from Lemma \ref{LemmaIndepEves}. The identity follows.\\
Now suppose that the identity holds at rank $n-1$ for all $z >0$, and all $f_1,g_1,\ldots,f_{n-1},g_{n-1}$. At rank $n$, we get for any $f_1,g_1,\ldots,f_n,g_n,\phi$ and $z > 0$
\begin{eqnarray*}
&&\!\!\mathbb{E}_z\Big[f_1(\hat{\Pi}_{0,t_1})g_1(\rZ_{t_1})\ldots f_n(\hat{\Pi}_{t_{n-1},t_{n}})g_n(\rZ_{t_n})\phi((\rme^{i}_0)_{i\geq 1})\Big]\\
&=&\!\!\mathbb{E}_z\Big[f_1(\hat{\Pi}_{0,t_1})g_1(\rZ_{t_1})\phi((\rme^{i}_0)_{i\geq 1})\,\mathbb{E}\big[f_2(\hat{\Pi}_{t_1,t_2})g_2(\rZ_{t_2})\ldots f_n(\hat{\Pi}_{t_{n-1},t_n})g_n(\rZ_{t_n})\,\big|\,{\cal F}_{t_1},(\rme_{t_1}^{i})_{i\geq 1}\big]\Big]
\end{eqnarray*}
where ${\cal F}_{t_1}$ is the $\sigma$-field generated by the flow of subordinators until time $t_1$. Remark that we have used the measurability of $(\rme^i_0)_{i\geq 1}$ from ${\cal F}_{t_1}$ and $(\rme^i_{t_1})_{i\geq 1}$, given by Proposition \ref{PropCompositionRule}. We now apply the Markov property to the process $(\rZ_t,t\geq 0)$ to obtain
\begin{equation*}
=\mathbb{E}_z\Big[f_1(\hat{\Pi}_{0,t_1})g_1(\rZ_{t_1})\phi((\rme^{i}_0)_{i\geq 1})\,\mathbb{E}_{\rZ_{t_1}}\big[f_2(\hat{\Pi}_{0,t_2-t_1})g_2(\rZ_{t_2-t_1})\ldots g_n(\rZ_{t_n-t_1})\,\big|\,(\rme_{t_1}^{i})_{i\geq 1}\big]\Big]
\end{equation*}
Notice that we use an abusive notation when conditioning on $(\rme_{t_1}^{i})_{i\geq 1}$: we mean that the sequence of ancestors at time $0$ in the shifted (by $t_1$) process is equal to the sequence $(\rme_{t_1}^{i})_{i\geq 1}$ of the original flow of subordinators. We believe that an accurate notation would have greatly burdened the preceding equations. We now apply the recursion hypothesis and the case $n=1$ to obtain
\begin{eqnarray*}
&=&\mathbb{E}_z\Big[f_1(\rH_{0,t_1})g_1(\rZ_{t_1})\mathbb{E}_{\rZ_{t_1}}[f_2(\rH_{0,t_2-t_1})g_2(\rZ_{t_2-t_1})\ldots g_n(\rZ_{t_n-t_1})]\Big]\,\mathbb{E}_z\big[\phi((\rme^{i}_0)_{i\geq 1})\big]\\
&=&\mathbb{E}_z[f_1(\rH_{0,t_1})g_1(\rZ_{t_1})f_2(\rH_{t_1,t_2})\ldots f_n(\rH_{t_{n-1},t_n})g_n(\rZ_{t_n})]\,\mathbb{E}_z[\phi((\rme^{i}_0)_{i\geq 1})]
\end{eqnarray*}
where the last equality is due to the Markov property applied to the chain $\big(\rZ_{t_i},\rH_{t_{i-1},t_i},(t_{i+1}-t_i)\big)_{1\leq i\leq n}$. Note that this discrete chain is homogeneous in time since we include in the state-space the length of the \textit{next} time interval. The recursion is complete.\cqfd
\end{proof}

\subsection{The pathwise lookdown representation}
So far, we have defined pathwise from the flow of $\Psi$-MVBP $(\rmm_{s,t},0\leq s \leq t < \rT)$ the Eves process $(\rme^{i}_s,s \in [0,\rT))_{i\geq 1}$ and a $\Psi$ flow of partitions $(\hat{\Pi}_{s,t},0 \leq s \leq t < \rT)$. Thanks to Proposition \ref{PropositionRegularization}, we can consider a regularized modification of the flow of partitions that we still denote $(\hat{\Pi}_{s,t},0 \leq s \leq t < \rT)$ for convenience. We are now able to define a particle system $(\upxi_{s,t}(i),0 \leq s \leq t < \rT)_{i\geq 1}$ as follows. For all $s \in [0,\rT)$, let $(\upxi_{s,t}(i),t \in [s,\rT))_{i\geq 1} := \mathscr{L}_s(\hat{\Pi},(\rme^i_s)_{i\geq 1})$ and define the measure-valued process $(\Xi_{s,t},t \in [s,\rT)) := \mathscr{E}_s(\hat{\Pi},(\rme^i_s)_{i\geq 1})$. The rest of this subsection is devoted to the proof of Theorem \ref{ThLookdown}.
\begin{proof}(Theorem \ref{ThLookdown})
Fix $s\geq 0$ and work conditionally on $\{s < \rT\}$. From the lookdown representation, we know that for all $t \in [s,\rT)$, almost surely
\begin{equation*}
\Xi_{s,t}(dx) = \sum_{i\geq 1}|\hat{\Pi}_{s,t}(i)|\delta_{\rme^{i}_{s}}(dx) + (1-\sum_{i\geq 1}|\hat{\Pi}_{s,t}(i)|)dx
\end{equation*}
Thanks to Proposition \ref{PropCompositionRule}, for all $t \in [s,\rT)$, almost surely for all $i\geq 1$, $|\hat{\Pi}_{s,t}(i)|=\rmr_{s,t}(\{\rme^{i}_{s}\})$ and
\begin{equation*}
\rmr_{s,t} = \sum_{i\geq 1}\rmr_{s,t}(\{\rme^{i}_{s}\})\delta_{\rme^{i}_{s}}(dx) + (1-\sum_{i\geq 1}\rmr_{s,t}(\{\rme^{i}_{s}\}))dx
\end{equation*}
we obtain that almost surely for all $t\in[s,\rT)\cap\mathbb{Q}$, $\Xi_{s,t} = \rmr_{s,t}$. Since both processes are c\`adl\`ag, we deduce they are equal almost surely.\vspace{6pt}

We now turn our attention to the proof of the uniqueness property. Let $(\rH_{s,t},0 \leq s \leq t < \rT)$ be a $\Psi$-flow of partitions  defined from $\rZ$ and $(\upchi_s(i),s\in[0,\rT))_{i\geq 1}$ be, at each time $s \in [0,\rT)$, a sequence of r.v. taking distinct values in $[0,1]$. Define for each $s\in[0,\rT)$
\begin{equation*}
(\rX_{s,t},t\in [s,\rT)) := \mathscr{E}_{s}(\rH,(\upchi_s(i))_{i\geq 1})
\end{equation*}
and suppose that a.s. $(\rX_{s,t},t\in [s,\rT)) = (\rmr_{s,t},t \in [s,\rT))$.\vspace{3pt}

From Proposition \ref{PropUnicityInitialTypes}, we deduce that for each $s \in [0,\rT)$, almost surely $(\upchi_s(i))_{i\geq 1} = (\rme^{i}_s)_{i\geq 1}$. So the first uniqueness property is proved. We now prove the second uniqueness property. There exists an event $\Omega^*$ of probability $1$ such that on this event, for every rational numbers $s,t$ such that $0 \leq s \leq t < \rT$ and every integer $i\geq 1$ we have
\begin{equation}\label{EquationEqualityFreq}
\rmr_{s,t}(\{\rme^{i}_s\}) = |\hat{\Pi}_{s,t}(i)| = |\rH_{s,t}(i)|
\end{equation}
In the rest of the proof, we work on the event $\Omega^*$. Our proof relies on the following claim.\\\\
\textbf{Claim} The flow of partitions $\hat{\Pi}$ is entirely defined by the knowledge of the quantities $|\hat{\Pi}_{s,t}(i)|$ for every rational values $0 \leq s \leq t < \rT$ and every integer $i\geq 1$.\\\\
Obviously, the same then holds for the flow of partitions $\rH$. Thanks to this result and Equation (\ref{EquationEqualityFreq}), we deduce that $\hat{\Pi} = \rH$ almost surely.\cqfd
\end{proof}
It remains to prove the \textbf{Claim}. This is achieved thanks to the following two lemmas.
\begin{lemma}\label{LemmaUniqueness1}
Almost surely, for every $s\leq t$ in $[0,\rT)$ such that $t$ is rational, $\hat{\Pi}_{s,t}$ admits asymptotic frequencies and the process $r\mapsto|\hat{\Pi}_{t-r,t}(i)|$ is l\`adc\`ag for every integer $i$.
\end{lemma}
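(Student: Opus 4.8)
The plan is to prove the two assertions separately and, since the rationals are countable, to fix one rational $t\in(0,T)$, work on $\{t<T\}$, and establish both statements almost surely for that $t$; a countable intersection then gives the simultaneous statement over all rational $t$. For the \emph{existence of asymptotic frequencies}, I would apply Lemma \ref{LemmaIndepEves} at time $t$: the sequence $(\mathbf{e}^i_t)_{i\ge1}$ is i.i.d. uniform$[0,1]$ and independent of $\sigma\{m_{u,v}:0\le u\le v\le t\}$, hence of the whole backward family of bridges $(F_{s,t})_{0\le s\le t}$ (each $F_{s,t}$ being measurable with respect to $m_{s,t}$, $Z_s$, $Z_t$). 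Conditioning on that family, the Glivenko--Cantelli theorem produces an event of probability $1$ on which the empirical measures $\frac1n\sum_{j=1}^n\delta_{\mathbf{e}^j_t}$ converge uniformly to Lebesgue measure, so that $\frac1n\#\{j\le n:\mathbf{e}^j_t\in(a,b]\}\to b-a$ uniformly over all subintervals $(a,b]\subseteq[0,1]$. By Proposition \ref{PropCompositionRule} the $i$-th block of $\hat{\Pi}_{s,t}$ is the level set $\{j:\mathbf{e}^j_t\in(F_{s,t}(\mathbf{e}^i_s-),F_{s,t}(\mathbf{e}^i_s)]\}$, so on this event it has asymptotic frequency $F_{s,t}(\mathbf{e}^i_s)-F_{s,t}(\mathbf{e}^i_s-)=r_{s,t}(\{\mathbf{e}^i_s\})$ \emph{simultaneously for all} $s\in[0,t]$ and all $i$. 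This proves the first assertion and yields the identity $|\hat{\Pi}_{s,t}(i)|=r_{s,t}(\{\mathbf{e}^i_s\})$.

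For the l\`adc\`ag property I would first record a monotonicity. For a fixed integer $k$, write $g_k(s):=|\hat{\Pi}_{s,t}\langle k\rangle|$ for the frequency of the block containing $k$. The cocycle identity $\hat{\Pi}_{s_0,t}=Coag(\hat{\Pi}_{s,t},\hat{\Pi}_{s_0,s})$ for $s_0<s<t$, together with the additivity of frequencies under $Coag$ (valid on the Glivenko--Cantelli event, since a $Coag$-block is a countable union of level sets whose measures add), gives $g_k(s_0)=g_k(s)+D(s)$ with $D(s)\ge0$ the total $r_{s,t}$-mass of the Eves at time $s$, other than $k$'s, that merge into $k$'s block on $(s_0,s]$. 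Hence $s\mapsto g_k(s)$ is non-increasing, so $r\mapsto g_k(t-r)$ is non-decreasing and has one-sided limits everywhere. It then remains only to prove right-continuity of $g_k$ in $s$, i.e. $D(s)\to0$ as $s\downarrow s_0$. Reading the merger through the flow of subordinators, the Eves counted by $D(s)$ are the jumps of $S_{s,t}$ lying in $(S_{s_0,s}(x_0-),S_{s_0,s}(x_0)]$, with $x_0$ the location of $k$'s Eve at time $s_0$, minus the largest one; since $S_{s_0,s}\to\mathrm{id}$ as $s\downarrow s_0$ this interval shrinks to $\{x_0\}$ while $S_{s,t}\to S_{s_0,t}$, so the removed mass collapses onto the single jump of $S_{s_0,t}$ at $x_0$ and $D(s)\to0$. (Equivalently: the convention that a reproduction event at time $u$ affects $\hat{\Pi}_{s,t}$ only for $s<u$ makes the backward process left-continuous, and the real content is that the infinitely many infinitesimal mergers accumulating from the right carry vanishing mass.)

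Finally I would transfer this to the $i$-th block. Using the regularized modification of Proposition \ref{PropositionRegularization}, $\hat{\Pi}(\omega)$ is almost surely a deterministic flow of partitions without simultaneous mergers, so for each $n$ the map $s\mapsto\hat{\Pi}^{[n]}_{s,t}$ is a right-continuous step function with finitely many jumps on compacts. Consequently the least element $\ell_i(s)$ of the $i$-th block is constant on a right neighbourhood $[s_0,s_0+\varepsilon)$ and on a left neighbourhood $(s_0-\varepsilon,s_0)$ of any $s_0$; calling these constants $k$ and $k'$, one has $|\hat{\Pi}_{s,t}(i)|=g_k(s)$ for $s\in[s_0,s_0+\varepsilon)$ and $|\hat{\Pi}_{s,t}(i)|=g_{k'}(s)$ for $s\in(s_0-\varepsilon,s_0)$. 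Left-continuity in $r$ at $r_0=t-s_0$ then follows from the right-continuity of $g_k$ established above, and the existence of the right limit in $r$ from the monotonicity of $g_{k'}$. I expect the vanishing of the accumulated merged mass $D(s)$ to be the one genuinely delicate step (a joint limit in $s$ and in the shrinking interval); everything else is bookkeeping with the cocycle property and the step-function structure of the regularized flow.
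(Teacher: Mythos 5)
Your treatment of the first assertion is correct and in fact sharper than the paper's: conditioning on the bridges via Lemma \ref{LemmaIndepEves} and invoking Glivenko--Cantelli for the i.i.d. uniforms $(\mathbf{e}^j_t)_{j\geq 1}$ gives the asymptotic frequencies simultaneously for \emph{all} $s\in[0,t]$, together with the identity $|\hat{\Pi}_{s,t}(i)|=r_{s,t}(\{\mathbf{e}^i_s\})$, whereas the paper only records existence at rational $s$ from exchangeability. The monotonicity of $g_k$ and the bookkeeping reducing the $i$-th block to a locally constant least element are also sound. Be aware, though, that your route differs from the paper's: the paper argues by cases on the L\'evy--Khintchine data of $u_t$ --- if $w_t$ is finite and $d_t=0$, or if $d_t>0$, the backward block process is locally a step function and the frequency is locally constant, so there is nothing to prove; the remaining case (infinitely many blocks, no dust) is deferred to Proposition 2.1 of the author's earlier paper on flows of partitions. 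Your argument instead concentrates all the difficulty into the single limit $D(s)\to 0$, which is indeed the only nontrivial point.

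And that step, as you half-suspect, is not proved. Writing $I_s:=(F_{s_0,s}(x_{s_0}-),F_{s_0,s}(x_{s_0})]$, the cocycle $F_{s_0,t}=F_{s,t}\circ F_{s_0,s}$ gives $dF_{s,t}(I_s)=\Delta F_{s_0,t}(x_{s_0})=g_k(s_0)$ for every $s>s_0$: the $dF_{s,t}$-mass of the shrinking interval is \emph{constant}, so ``the interval shrinks to $\{x_0\}$ while $S_{s,t}\to S_{s_0,t}$'' yields nothing by itself --- convergence of subordinators does not control the masses of individual atoms at moving locations, and the danger here is precisely that an increasing union of blocks can have an asymptotic frequency strictly larger than the limit of the frequencies. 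What must actually be shown is that the non-principal atoms of $F_{s,t}$ in $I_s$ (plus the drift part) carry total mass tending to $0$. One way to close the gap with the tools you already have: the jump intervals $J_s:=(F_{s,t}(x_s-),F_{s,t}(x_s)]$ are nested and increase as $s\downarrow s_0$; every sampled point $\mathbf{e}^j_t$ belonging to $k$'s block at $s_0$ lies in $J_s$ for $s$ close enough to $s_0$ (only finitely many reproduction events act on $[n]$ near $s_0$, by the regularized flow), and these points are a.s. dense in $I^{\mathrm{out}}:=(F_{s_0,t}(x_{s_0}-),F_{s_0,t}(x_{s_0})]$; hence $\bigcup_{s>s_0}J_s$ has full Lebesgue measure in $I^{\mathrm{out}}$ and $D(s)=\mathrm{Leb}(I^{\mathrm{out}}\setminus J_s)\to 0$ by continuity from above. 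Without such an argument (or the paper's case reduction), the l\`adc\`ag claim is asserted rather than established.
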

\begin{lemma}\label{LemmaUniqueness2}
Let $I$ be a subset of $\mathbb{N}$. The following assertions are equivalent
\begin{enumerate}[i)]
\item $\hat{\Pi}_{s-,s}$ has a unique non-singleton block $I$.
\item For every $i\geq 1$, let $b(i)$ be the smallest integer such that $i \!=\! b(i)\! -\!(\#\{I\cap[b(i)]\}\!-\!1)\vee 0$. Then for every $i\ne \min I$, $(|\hat{\Pi}_{s-,s+t}(i)|,t\geq 0) \!=\! (|\hat{\Pi}_{s,s+t}(b(i))|,t\geq 0)$.
\end{enumerate}
and similarly when $\hat{\Pi}$ is replaced by $\rH$.
\end{lemma}
\begin{proof}(\textbf{Claim})
The knowledge of $|\hat{\Pi}_{s,t}(i)|$ for every rational values $0 \leq s \leq t < \rT$ and every integer $i\geq 1$, entails, thanks to Lemma \ref{LemmaUniqueness1}, the knowledge of the quantities $|\hat{\Pi}_{r-,t}(i)|$ and $|\hat{\Pi}_{r,t}(i)|$ for all $r\in(0,t)$. Then, Lemma \ref{LemmaUniqueness2} ensures that the elementary reproduction events $\hat{\Pi}_{s-,s}$ are obtained from the preceding quantities.\cqfd
\end{proof}
\begin{proof}(\textbf{Lemma \ref{LemmaUniqueness1}})
From the exchangeability properties of $\hat{\Pi}$, we know that almost surely the quantities $|\hat{\Pi}_{s,t}(i)|$ exist simultaneously for all rational values $s\leq t$ and integers $i\geq 1$. Fix the rational value $t$. We differentiate three cases. First if $w_t$ is a finite measure and $d_t=0$, then the process $r\mapsto\hat{\Pi}_{t-r,r}$ has finitely many blocks and no dust, and evolves at discrete times by coagulation events. Thus, for all $s \in (0,\rT)$, there exist rational values $p < s < q$ such that $\hat{\Pi}_{s-,t}=\hat{\Pi}_{p,t}$ and $\hat{\Pi}_{s,t} = \hat{\Pi}_{q,t}$. The result follows. Second if $d_t > 0$. Then, one can easily prove that the rate at which the $i$-th block is involved in a coalescence event is finite, for every $i\geq 1$. Therefore, the same identities, but for the $i$-th block, as in the previous case hold.\\
Finally, consider the case where blocks have infinitely many blocks but no dust. Then, one can adapt the arguments used in Lemma 6.2 of~\cite{Labbe11} to obtain the result.\cqfd
\end{proof}
\begin{proof}(\textbf{Lemma \ref{LemmaUniqueness2}})
The objects are well-defined thanks to Lemma \ref{LemmaUniqueness1}. One can adapt the proof of Lemma 5.3 of~\cite{Labbe11} in this setting to obtain the asserted result. For this, it is enough to remark that the processes $(|\hat{\Pi}_{s,t}(i)|,t \in (s,\rT)\cap\mathbb{Q})_{i\geq 1}$ are distinct by pair since either they reach $0$ at distinct finite times or their asymptotic behaviours are distinct.\cqfd
\end{proof}

\section{Appendix}
\label{Proofs}
\subsection{The Lamperti representation}\label{AppendixLamperti}
The Lamperti representation provides a time change that maps a $\Psi$-L\'evy process to a $\Psi$-CSBP. It relies on the following objects. Define for all $t \geq 0$ and any $f \in \mathscr{D}([0,+\infty],[0,+\infty])$, 
\begin{equation*}
I(f)_t := \inf\{s \geq 0 : \int_{0}^{s}f(u)du > t\}
\end{equation*}
Then we define $L:\mathscr{D}([0,+\infty],[0,+\infty])\rightarrow \mathscr{D}([0,+\infty],[0,+\infty])$ by setting
\begin{equation*}
L(f) := f \circ I(f)\mbox{ for all }f \in \mathscr{D}([0,+\infty],[0,+\infty])
\end{equation*}
Conversely, one can verify that $L^{-1}(g) = g\circ J(g)$ where, for all $g \in \mathscr{D}([0,+\infty],[0,+\infty])$
\begin{equation*}
J(g)_t := \inf\{s \geq 0: \int_{0}^{s}g^{-1}(u)du > t\}
\end{equation*}
Consider a $\Psi$-L\'evy process $\rY$ started from $1$ stopped whenever reaching $0$, the result of Lamperti ensures that $L^{-1}(\rY)$ is a $\Psi$-CSBP started from $1$, and that $L(\rZ)$ is a $\Psi$-L\'evy started from $1$ stopped whenever reaching $0$.

\subsection{Proof of Lemma \ref{LemmaDistributionAbsContinuous}}
A simple calculation ensures that $t\mapsto u_t(\infty)$ (when $u_t(\infty) < \infty$) and $t\mapsto u_t(0+)$ are differentiable, with derivatives equal to $-\Psi(u_t(\infty))$ and $-\Psi(u_t(0+))$ respectively. Therefore $t \mapsto \mathbb{P}(T \leq t)=e^{-u_t(\infty)}+1-e^{-u_t(0+)}$ is differentiable as well. So the distribution of $\rT$ is absolutely continuous with respect to the Lebesgue measure on $(0,\infty)$ on the event $\{\rT < \infty\}$.\cqfd

\subsection{Proof of Lemma \ref{LemmaIntegrabilityCSBP}}
Let $(\rY_t, t \geq 0)$ be a $\Psi$-L\'evy started from $1$. Using the Lamperti's result, we define a $\Psi$-CSBP $\rZ := L^{-1}(\rY)$ started from $1$. For all $\epsilon \in (0,1)$, we introduce the stopping time
\begin{equation*}
\rT^{\rY}_\epsilon := \inf\{t \geq 0: \rY_t \notin (\epsilon,1/\epsilon)\}
\end{equation*}
and use the stopping time $\rT_\epsilon$ introduced for $\rZ$ previously. It is immediate to check that for all $t \geq 0$, $\rY_{J(\rY)_t\wedge \rT^{\rY}_\epsilon} = \rZ_{t\wedge \rT_\epsilon}$. Fix $t \geq 0$ and notice that
\begin{equation*}
J(\rY)_t \wedge \rT^{\rY}_\epsilon \leq \frac{t}{\epsilon}
\end{equation*}
Thus, we get that
\begin{eqnarray*}
\mathbb{E}\bigg[\sum_{s \leq t\wedge \rT_\epsilon: \Delta \rZ_s > 0}\Big(\frac{\Delta \rZ_s}{\rZ_s}\Big)^2\bigg] &=& \mathbb{E}\bigg[\sum_{\substack{s \leq J(\rY)_t\wedge \rT^{\rY}_\epsilon:\\0 < \Delta \rY_s < 1}} \Big(\frac{\Delta \rY_s}{\rY_s}\Big)^2\bigg] + \mathbb{E}\bigg[\sum_{\substack{s \leq J(\rY)_t\wedge \rT^{\rY}_\epsilon:\\\Delta \rY_s \geq 1}} \Big(\frac{\Delta \rY_s}{\rY_s}\Big)^2\bigg]\\
&\leq& \frac{1}{\epsilon^2}\mathbb{E}\bigg[\sum_{\substack{s \leq J(\rY)_t\wedge \rT^{\rY}_\epsilon:\\0 < \Delta \rY_s < 1}} \big(\Delta \rY_s\big)^2\bigg] + \mathbb{E}\Big[\#\{s \leq J(\rY)_t\wedge \rT^{\rY}_\epsilon: \Delta \rY_s \geq 1\}\Big]\\
&\leq& \frac{1}{\epsilon^2}\mathbb{E}\bigg[\sum_{s \leq t/\epsilon: 0 < \Delta \rY_s < 1} \big(\Delta \rY_s\big)^2\bigg] + \mathbb{E}\Big[\#\{s \leq t/\epsilon: \Delta \rY_s \geq 1\}\Big]\\
&\leq& \frac{t}{\epsilon^3}\int_{(0,1)}h^2\nu(dh) + \frac{t}{\epsilon}\nu([1,\infty)) < \infty
\end{eqnarray*}
where the last inequality derives from the very definition of $\nu$.\cqfd

\subsection{Proof of Proposition \ref{PropContinuityLambda}}
Fix $2 \leq k \leq n$. Consider a sequence $(z_m)_{m\geq 1} \in \mathbb{R}_+^*$ that converges to $z > 0$, and a sequence of branching mechanisms $(\Psi_m)_{m\geq 1}$ such that Assumption \ref{AssumptionPsi} is verified. We have to prove that
\begin{equation*}
\int_{0}^{1}x^k(1-x)^{n-k}\Big(\frac{\sigma_m^2}{z_m}x^{-2}\delta_{0}(dx)+z_m\nu_m\circ\phi_{z_m}^{-1}(dx)\Big) \rightarrow \int_{0}^{1}x^k(1-x)^{n-k}\Big(\frac{\sigma^2}{z}x^{-2}\delta_{0}(dx)+z\nu\circ\phi_z^{-1}(dx)\Big)
\end{equation*}
Since $x \mapsto x^{k-2}(1-x)^{n-k}$ is continuous on $[0,1]$, it suffices to prove that
\begin{equation}\label{EqWeakConv}
\frac{\sigma_m^2}{z_m}\,\delta_{0}(dx)+z_m x^2\nu_m\circ\phi_{z_m}^{-1}(dx) \stackrel{(w)}{\longrightarrow} \frac{\sigma^2}{z}\,\delta_{0}(dx)+z x^2\nu\circ\phi_z^{-1}(dx)
\end{equation}
in the sense of weak convergence in $\mathscr{M}_f([0,1])$. Let $f:[0,1]\rightarrow\mathbb{R}$ be a continuous function. Set
\begin{equation*}
I_m :=\!\! \int_{0}^{1}f(x)\Big(\frac{\sigma_m^2}{z_m}\delta_{0}(dx)+z_m x^2\nu_m\circ\phi_{z_m}^{-1}(dx)\Big) \!=\! \int_{0}^{\infty}\!\!\!f(\frac{h}{h+z_m})\Big(\frac{\sigma_m^2}{z_m}\delta_{0}(dh)+z_m \big(\frac{h}{h+z_m}\big)^2\nu_m(dh)\Big)
\end{equation*}
We decompose $I_m = A_m + B_m$ where
\begin{eqnarray*}
A_m \!\!&:=&\!\!\! \int_{0}^{\infty}\!\!\!f(\frac{h}{h+z})(\frac{h}{h+z})^2\frac{z}{1\wedge h^2}(\sigma_m^2\delta_{0}(dh)+(1\wedge h^2)\nu_m(dh))\\
B_m \!\!&:=&\!\!\! \int_{0}^{\infty}\!\!\!\Big(f(\frac{h}{h+z_m})(\frac{h}{h+z_m})^2 z_m-f(\frac{h}{h+z})(\frac{h}{h+z})^2 z\Big)\frac{1}{1\wedge h^2}\big(\sigma_m^2\delta_{0}(dh)+(1\wedge h^2)\nu_m(dh)\big)
\end{eqnarray*}
Using the remark below Assumption \ref{AssumptionPsi}, one can check that $A_m \rightarrow A$ where
\begin{eqnarray*}
A &:=& \int_{0}^{\infty}\!\!f\Big(\frac{h}{h+z}\Big)\Big(\frac{h}{h+z}\Big)^2\frac{z}{1\wedge h^2}\big(\sigma^2\delta_{0}(dh)+(1\wedge h^2)\nu(dh)\big)\\
&=&\int_{0}^{1}\!\!f(x)\big(\frac{\sigma^2}{z}\delta_{0}(dx)+z x^2\nu\circ\phi_{z}^{-1}(dx)\big)
\end{eqnarray*}
Therefore, the proof of Equation (\ref{EqWeakConv}) reduces to show that $B_m \rightarrow 0$ as $m\uparrow\infty$. To that end, we get
\begin{eqnarray*}
B_m &=&\!\! \int_{0}^{\infty}\!\!f(\frac{h}{h+z_m})\frac{h}{h+z_m}\Big(\frac{h}{h+z_m}\frac{z_m}{1\wedge h^2}-\frac{h}{h+z}\frac{z}{1\wedge h^2}\Big)\big[\sigma_m^2\delta_{0}(dh)+(1\wedge h^2)\nu_m(dh)\big]\\
&+&\!\! \int_{0}^{\infty}\!\!\Big(f(\frac{h}{h+z_m})\frac{h}{h+z_m}-f(\frac{h}{h+z})\frac{h}{h+z}\Big)\frac{h}{h+z}\frac{z}{1\wedge h^2}\big[\sigma_m^2\delta_{0}(dh)+(1\wedge h^2)\nu_m(dh)\big]
\end{eqnarray*}
Set $g_m(h) := \displaystyle\frac{h}{h+z_m}\frac{z_m}{1\wedge h^2}-\frac{h}{h+z}\frac{z}{1\wedge h^2}$. One can easily prove that $g_m(h) \rightarrow 0$ as $m\uparrow\infty$ uniformly in $h \in \mathbb{R}_+$. Therefore we have the following upper bound for the absolute value of the first term on the r.h.s. of the preceding equation
\begin{equation*}
||f||_{\infty}\int_{0}^{\infty}|g_m(h)|[\sigma_m^2\delta_{0}(dh)+(1\wedge h^2)\nu_m(dh)] \rightarrow 0\mbox{ as }m \uparrow\infty
\end{equation*}
Finally remark that the second term in the r.h.s. of the preceding equation gives
\begin{eqnarray*}
&&\int_{0}^{\infty}\Big(f(\frac{h}{h+z_m})\frac{h}{h+z_m}-f(\frac{h}{h+z})\frac{h}{h+z}\Big)\frac{h}{h+z}\frac{z}{1\wedge h^2}\,\big[\sigma_m^2\delta_{0}(dh)+(1\wedge h^2)\nu_m(dh)\big]\\
&=& \int_{0}^{\infty}\Big(f(\frac{h}{h+z_m})-f(\frac{h}{h+z})\Big)(\frac{h}{h+z})^2\frac{z}{1\wedge h^2}\,\big[\sigma_m^2\delta_{0}(dh)+(1\wedge h^2)\nu_m(dh)\big]\\
&+& \int_{0}^{\infty}f(\frac{h}{h+z_m})\Big(\frac{1}{h+z_m}-\frac{1}{h+z}\Big)\frac{h^2}{h+z}\frac{z}{1\wedge h^2}\,\big[\sigma_m^2\delta_{0}(dh)+(1\wedge h^2)\nu_m(dh)\big]
\end{eqnarray*}
Denote by $C_m$ and $D_m$ respectively the first and second term on the r.h.s. of the preceding equation. One can easily prove that $h\mapsto\displaystyle\frac{h}{h+z_m}-\frac{h}{h+z}$ converges to $0$ as $m\uparrow\infty$ uniformly in $h \in \mathbb{R}_+$. And since $f$ is uniformly continuous on $[0,1]$, we deduce that $h\mapsto f(\frac{h}{h+z_m})-f(\frac{h}{h+z})$ converges to $0$ as $m\uparrow\infty$ uniformly on $\mathbb{R}_+$. Therefore it is immediate to check that $|C_m| \rightarrow 0$ as $m\uparrow\infty$. Moreover,
\begin{equation*}
|f(\frac{h}{h+z_m})(\frac{1}{h+z_m}-\frac{1}{h+z})\frac{h^2}{h+z}\frac{z}{1\wedge h^2}| \leq ||f||_{\infty}\frac{|z-z_m|}{z_m}(\frac{1}{z}\vee z)
\end{equation*}
Hence, $D_m \rightarrow 0$ as $m\uparrow\infty$. This ends the proof of the proposition.\cqfd

\subsection{Proof of Lemma \ref{LemmaFeller}}
The state space of this process is $(0,\infty)\times\mathscr{P}_{\infty}$ to which is added formally a cemetery point $\partial$ that gathers all the states of the form $(0,\pi)$ and $(\infty,\pi)$ where $\pi$ is any partition. The semigroup has been completely defined in Theorem \ref{TheoremFoP}, and it follows that the corresponding process $(\rZ_t,\hat{\Pi}_t;t\geq 0)$ is Markov. To prove that this semigroup verifies the Feller property, we have to show first that the map $(z,\pi) \mapsto \mathbb{E}[f(\rZ_t,\hat{\Pi}_t)]$ is continuous and vanishes at $\partial$, and second that $\mathbb{E}[f(\rZ_t,\hat{\Pi}_t)] \rightarrow f(z,\pi)$ as $t\downarrow 0$, for any given continuous map $f:(0,\infty)\times\mathscr{P}_{\infty}\rightarrow\mathbb{R}$ that vanishes at $\partial$, and any initial condition $(z,\pi) \in (0,\infty)\times\mathscr{P}_{\infty}$ for the process $(\rZ,\hat{\Pi})$.\\
To show the first assertion, we consider the map
\begin{equation*}
(z,\pi) \mapsto \mathbb{E}[f(\rS_{0,t}(z),\Coag(\mathscr{P}(\rS^{z}_{0,t}),\pi))]
\end{equation*}
where $(\rS_{0,t}(a),a \geq 0)$ is a subordinator with Laplace exponent $u_t(.)$ and $\rS^{z}_{0,t}$ is its restriction to $[0,z]$. Let $(z_m,\pi_m)_{m\geq 1}$ be a sequence converging to $(z,\pi) \in (0,\infty)\times\mathscr{P}_{\infty}$. For all $\epsilon > 0$, there exists $m_0\geq 1$ such that for all $m\geq m_0$, we have
\begin{eqnarray*}
\mathbb{P}\Big(\frac{|\rS_{0,t}(z)-\rS_{0,t}(z_m)|}{\rS_{0,t}(z)\vee \rS_{0,t}(z_m)}>\epsilon\, \big| \, \rS_{0,t}(z) \notin\{0,\infty\}\Big) < \epsilon\\
\mathbb{P}\big(\rS_{0,t}(z_m)\ne \rS_{0,t}(z)\,|\, \rS_{0,t}(z) \in\{0,\infty\}\big) < \epsilon 
\end{eqnarray*}
Then, there are two cases: on the event $\{\rS_{0,t}(z) \in \{0,\infty\}\}$, the process starting from $(z,\pi)$ is in the cemetery point at time $t$ and with conditional probability greater than $1-\epsilon$, this is also the case at time $t$ for the process starting from $(z_m,\pi_m)$, for every $m\geq m_0$. On the complementary event $\{\rS_{0,t}(z) \notin \{0,\infty\}\}$, fix $m\geq m_0$ and $n\geq 1$. Without loss of generality, we can suppose that $z_m \leq z$. Let $(U_i)_{i\geq 1}$ be a sequence of i.i.d. uniform$[0,\rS_{0,t}(z)]$ r.v. and introduce the partitions $\mathscr{P}(\rS^{z}_{0,t})$ by applying the paint-box scheme to the subordinator $\rS^{z}_{0,t}$ using the $(U_i)_{i\geq 1}$. Let $(V_i)_{i \geq 1}$ be an independent sequence of i.i.d. uniform$[0,\rS_{0,t}(z_m)]$ r.v. and define the sequence $W_i := U_i\mathbf{1}_{\{U_i\leq \rS_{0,t}(z_m)\}}+V_i\mathbf{1}_{\{U_i> \rS_{0,t}(z_m)\}}$. This sequence is also i.i.d. uniform$[0,S_{0,t}(z_m)]$, and we apply the paint-box scheme to the subordinator $S^{z_m}_{0,t}$ with that sequence $(W_i)_{i\geq 1}$, then obtaining a partition $\mathscr{P}(S^{z_m}_{0,t})$. We have (recall the definition of the metric $d_{\mathscr{P}}$ given in Equation (\ref{EqMetricPartition}))
\begin{eqnarray*}
&&\mathbb{P}\big(d_{\mathscr{P}}(\mathscr{P}(\rS^{z_m}_{0,t}),\mathscr{P}(\rS^{z}_{0,t})) \leq 2^{-n}\, |\, \rS_{0,t}(z) \notin\{0,\infty\}\big) \\
&\geq& \mathbb{P}\bigg[\,\underset{i\leq n}{\bigcap}\big\{U_i= W_i\big\}\bigcap\Big\{\frac{|\rS_{0,t}(z)-\rS_{0,t}(z_m)|}{\rS_{0,t}(z)\vee \rS_{0,t}(z_m)}<\epsilon\Big\}\, \big| \, \rS_{0,t}(z) \notin\{0,\infty\}\bigg]\\
&\geq& \mathbb{P}\bigg[\,\underset{i\leq n}{\bigcap}\big\{U_i\leq \rS_{0,t}(z_m)\big\}\, \Big|\, \frac{|\rS_{0,t}(z)-\rS_{0,t}(z_m)|}{\rS_{0,t}(z)\vee \rS_{0,t}(z_m)}<\epsilon\,;\,\rS_{0,t}(z) \notin\{0,\infty\}\bigg]\\
&&\hspace{-10pt}\times\,\mathbb{P}\bigg[\frac{|\rS_{0,t}(z)-\rS_{0,t}(z_m)|}{\rS_{0,t}(z)\vee \rS_{0,t}(z_m)}<\epsilon\, \big|\,\rS_{0,t}(z) \notin\{0,\infty\}\bigg]\\
&\geq& \mathbb{P}\Big(\frac{U_1}{\rS_{0,t}(z)}\leq 1-\epsilon\Big)^n(1-\epsilon)
\geq (1-\epsilon)^{n+1}
\end{eqnarray*}
Putting all these arguments together and using the facts that the coagulation operator is bicontinuous and that $f$ is continuous and vanishes near $\partial$, one deduces that
\begin{equation*}
\mathbb{E}\big[f(\rS_{0,t}(z),\Coag(\mathscr{P}(\rS^{z}_{0,t}),\pi))-f(\rS_{0,t}(z_m),\Coag(\mathscr{P}(\rS^{z_m}_{0,t}),\pi_m))\big] \underset{m\rightarrow\infty}{\longrightarrow} 0
\end{equation*}
and the continuity property follows. The fact that it vanishes at $\partial$ is elementary. Let us now prove that for all $(z,\pi) \in (0,\infty)\times\mathscr{P}_{\infty}$, we have
\begin{equation*}
\mathbb{E}\big[f(\rS_{0,t}(z),\Coag(\mathscr{P}(\rS^{z}_{0,t}),\pi))\big] \underset{t\downarrow0}{\rightarrow} f(z,\pi)
\end{equation*}
This convergence follows from the c\`adl\`ag property of $t\mapsto \rS_{0,t}(z)$ and the fact that $\mathscr{P}(\rS^{z}_{0,t})$ tends to $\tO_{[\infty]}$ in distribution as $t\downarrow 0$. The Feller property follows.\cqfd

\subsection{Proof of Proposition \ref{PropositionRegularization}}
Let $(\hat{\Pi}_{s,t},0 \leq s \leq t < \rT)$ be a $\Psi$-flow of partitions with underlying $\Psi$-CSBP $\rZ$. The idea of the proof is the following: we consider the rational marginals of the flow and show that for $\mathbb{P}$-a.a. $\omega$, $(\hat{\Pi}_{s,t}(\omega),0 \leq s \leq t < \rT; s,t \in \mathbb{Q})$ is a deterministic flow of partitions. Thus we extend this flow to the entire interval $[0,\rT)$ and show that its trajectories are still deterministic flows of partitions, almost surely.\\
There exists an event $\Omega_{\hat{\Pi}}$ of probability $1$ such that on this event, we have:
\begin{itemize}
\item	For every $r < s < t \in [0,\rT)\cap\mathbb{Q}$, $\hat{\Pi}_{r,t} = \Coag(\hat{\Pi}_{s,t},\hat{\Pi}_{r,s})$.
\item For every $s\in(0,\rT)$, $\forall n\geq 1,\exists\epsilon>0$ s.t. $\forall p,q\in(s-\epsilon,s)\cap\mathbb{Q}$, $\hat{\Pi}_{p,q}^{[n]}=\tO_{[n]}$.
\item For every $s\in[0,\rT)$, $\forall n\geq 1,\exists\epsilon>0$ s.t. $\forall p,q\in(s,s+\epsilon)\cap\mathbb{Q}$, $\hat{\Pi}_{p,q}^{[n]}=\tO_{[n]}$.
\item For every $s\in[0,\rT)\cap\mathbb{Q}$, $\hat{\Pi}_{s,s}=0_{[\infty]}$ and the process $(\hat{\Pi}_{s,t},t \in [s,\rT)\cap\mathbb{Q})$ is c\`adl\`ag.
\end{itemize}
The existence of this event follows from the following arguments. First, for each given triplet the coagulation property holds a.s. So it holds simultaneously for all rational triplets, a.s. Second, the probability that $\hat{\Pi}_{r,t}$ is close to $\tO_{[\infty]}$ increases to $1$ as $t-r \downarrow 0$. Together with the coagulation property this ensures the second and third properties. Finally, Lemma \ref{LemmaFeller} shows that the process $(\rZ_t,\hat{\Pi}_t; t\geq 0)$ is Markov with a Feller semigroup, so it admits a c\`adl\`ag modification. This ensures the last assertion.\\
We now define a process $(\tilde{\hat{\Pi}}_{s,t},0 \leq s \leq t < \rT)$ as follows. On $\Omega_{\hat{\Pi}}$, we set for all $0 \leq s \leq t < \rT$
\begin{equation*}
\tilde{\hat{\Pi}}_{s,t} := \begin{cases}\hat{\Pi}_{s,t}&\mbox{ if }s,t \in \mathbb{Q}\\
\lim\limits_{r\downarrow t,r\in\mathbb{Q}}\hat{\Pi}_{s,r}&\mbox{ if }s\in\mathbb{Q},t\notin\mathbb{Q}\\
\lim\limits_{r\downarrow s,r\in\mathbb{Q}}\hat{\Pi}_{r,t}&\mbox{ if }t\in\mathbb{Q},s\notin\mathbb{Q}\\
\tO_{[\infty]}&\mbox{ if }s=t\\
\Coag(\tilde{\hat{\Pi}}_{r,t},\tilde{\hat{\Pi}}_{s,r})&\mbox{ if }s,t \notin\mathbb{Q}\mbox{ with any given }r\in(s,t)\cap\mathbb{Q}
\end{cases}
\end{equation*}
On the complementary event $\Omega\backslash\Omega_{\hat{\Pi}}$, set any arbitrary values to the flow $\tilde{\hat{\Pi}}$. A long but easy enumeration of all possible cases proves that this defines a modification of $\hat{\Pi}$ and that almost surely, the trajectories are deterministic flows of partitions.\cqfd

\subsection{Proof of Lemma \ref{LemmaCVCSBPkilled}}
Let $(f_m)_{m\geq 1}$ and $f$ be elements of $\mathscr{D}([0,+\infty],[0,+\infty])$ without negative jumps and introduce for all $\epsilon \in (0,1)$
\begin{equation*}
T^{f}(\epsilon) := \inf\{t \geq 0: f(t)\notin (\epsilon,1/\epsilon)\}\mbox{ and }T^{f_m}(\epsilon) := \inf\{t \geq 0: f_m(t)\notin (\epsilon,1/\epsilon)\}\mbox{ for all }m\geq 1
\end{equation*}
We make the following assumptions\begin{enumerate}[i)]
\item\label{Item1} $f_m(0)=f(0)=1$ and $f_m \underset{m\rightarrow\infty}{\longrightarrow} f$ for the distance $\bar{d}_{\infty}$.
\item\label{Item2} $T^{f}(\epsilon) = \inf\{t \geq 0: f(t)\notin [\epsilon,1/\epsilon]\}$.
\item\label{Item3} $\Delta f(T^{f}(\epsilon)) > 0 \Rightarrow f(T^{f}(\epsilon)) > 1/\epsilon$.
\item\label{Item4} For all $r \in [0,T^{f}(\epsilon))$, $\inf\limits_{s\in[0,r]}f(s) > \epsilon$. Moreover when $\Delta f(T^{f}(\epsilon)) > 0$, it remains true with $r =T^{f}(\epsilon)$.
\end{enumerate}
We fix $\epsilon \in (0,1)$ until the end of the proof.\\
\textit{Step 1.} We stress that $T^{f_m}(\epsilon) \rightarrow T^{f}(\epsilon)$ as $m\rightarrow\infty$. Indeed, suppose that there exists $\delta > 0$ such that $T^{f_m}(\epsilon) < T^{f}(\epsilon) - \delta$ for an infinity of $m\geq 1$ (for simplicity, say for all $m\geq 1$). Then, for all $m\geq 1$ we use \ref{Item4}) to deduce
\begin{equation*}
\bar{d}_{\infty}(f,f_m) \geq \frac{\delta}{2}\wedge\inf\limits_{\{s\in[0,T^{f}(\epsilon)-\frac{\delta}{2}]\}}\{|1/\epsilon-f(s)|\wedge|f(s)-\epsilon|\} > 0
\end{equation*}
which contradicts the convergence hypothesis. Similarly, suppose that there exists $\delta > 0$ such that $T^{f_m}(\epsilon) > T^{f}(\epsilon) + \delta$ for an infinity of $m\geq 1$ (here again, say for all $m\geq 1$). For all $m\geq 1$, using \ref{Item2}) we have
\begin{equation*}
\bar{d}_{\infty}(f,f_m) \geq \frac{\delta}{2}\wedge\sup\limits_{\{s\in[T^{f}(\epsilon),T^{f}(\epsilon)+\frac{\delta}{2}]\}}\{|f(s)-1/\epsilon|\wedge|\epsilon-f(s)|\} > 0
\end{equation*}
which also contradicts the convergence hypothesis. Therefore, the asserted convergence $T^{f_m}(\epsilon) \rightarrow T^{f}(\epsilon)$ as $m\rightarrow\infty$ holds. For simplicity, we now write $T^{f}$ instead of $T^{f}(\epsilon)$ to alleviate the notation.\vspace{3pt}\\
\textit{Step 2.} We now prove that $(f^{m}(t\wedge T^{f_m}),t \geq 0) \rightarrow (f(t\wedge T^{f}),t \geq 0)$ for the distance $\bar{d}_{\infty}$. We consider two cases.\\
\textit{Step 2a.} Suppose that $T^{f}$ is a continuity point of $f$. Fix $\eta > 0$, there exists $\delta > 0$ such that for all $t\in [T^{f}-\delta,T^{f}+\delta]$, we have
\begin{equation*}
\bar{d}(f(T^{f}),f(t)) < \eta
\end{equation*}
From \ref{Item1}), we know there exists an integer $m_0 \geq 1$ and a sequence $(\lambda_m)_{m\geq m_0}$ of homeomorphisms of $[0,\infty)$ into $[0,\infty)$ such that for all $m\geq m_0$, $|T^{f_m}-T^{f}| < \delta / 4$ and
\begin{equation*}
\sup\limits_{s\geq 0}|\lambda_m(s)-s| < \delta / 4\mbox{ and }\sup\limits_{s\geq 0}\bar{d}(f_m(\lambda_m(s)),f(s)) < \eta
\end{equation*}
We consider now any integer $m\geq m_0$. For all $r\in [0,\delta/2]$, we have
\begin{eqnarray*}
\bar{d}(f_m(T^{f_m}-r),f(T^{f})) &\leq& \bar{d}(f^{m}(T^{f_m}-r),f(\lambda_m^{-1}(T^{f_m}-r))) + \bar{d}(f(\lambda_m^{-1}(T^{f_m}-r)),f(T^{f}))\\
&<& 2\eta
\end{eqnarray*}
using the preceding inequalities. In particular we have proven that $f^{m}(T^{f_m}) \rightarrow f(T^{f})$ as $m\rightarrow\infty$. Finally for all $t \geq 0$ and all $m\geq m_0$ we have
\begin{eqnarray*}
\bar{d}(f_m(\lambda_m(t)\wedge T^{f_m}),f(t\wedge T^{f})) &\leq& \bar{d}(f_m(\lambda_m(t)),f(t)) + \bar{d}(f_m(T^{f_m}),f(t))\mathbf{1}_{\{\lambda_m(t)>T^{f_m},t<T^f\}}\\
&+& \bar{d}(f_m(\lambda_m(t)),f(T^{f}))\mathbf{1}_{\{\lambda_m(t)\leq T^{f_m},t \geq T^f\}} + \bar{d}(f_m(T^{f_m}),f(T^{f}))
\end{eqnarray*}
The first and the fourth term in the r.h.s are inferior to $\eta$ and $2\eta$ thanks to the preceding inequalities. Concerning the second term, one can show that $|t-T^{f}| < \delta/2$ when $\{\lambda_m(t)>T^{f_m},t<T^f\}$ which ensures that
\begin{equation*}
\bar{d}(f_m(T^{f_m}),f(t))\mathbf{1}_{\{\lambda_m(t)>T^{f_m},t<T^f\}} \leq \bar{d}(f_m(T^{f_m}),f(T^f)) + \bar{d}(f(T^f),f(t))\mathbf{1}_{\{|t-T^f|<\delta/2\}} \leq 3\eta
\end{equation*}
Similarly, when $\{\lambda_m(t)\leq T^{f_m},t \geq T^f\}$ the quantity $r:=T^{f_m}-\lambda_m(t)$ belongs to $[0,\delta/2]$ and thus
\begin{eqnarray*}
\bar{d}(f_m(\lambda_m(t)),f(T^{f}))\mathbf{1}_{\{\lambda_m(t)\leq T^{f_m},t \geq T^f\}} < 2\eta
\end{eqnarray*}
Hence, we have $\bar{d}(f_m(\lambda_m(t)\wedge T^{f_m}),f(t\wedge T^{f})) \leq 8\eta$. This proves the asserted convergence when $T^{f}$ is a continuity point of $f$.\\
\textit{Step 2b.} Now suppose that $f$ jumps at time $T^{f}$. Recall that in that case, $f(T^{f})>1/\epsilon$. We denote by $S:=\sup\{f(s):s \in [0,T^{f})\}$ the supremum of $f$ before time $T^{f}$, which is strictly inferior to $1/\epsilon$, and similarly $I:=\inf\{f(s):s \in [0,T^{f})\}$ which is strictly superior to $\epsilon$ thanks to \ref{Item4}). Set
\begin{equation*}
\eta := [\bar{d}(1/\epsilon,S)\wedge\bar{d}(1/\epsilon,f(T^{f}))\wedge\bar{d}(\epsilon,I)]/2
\end{equation*}
Thanks to \ref{Item1}), there exists an integer $m_0 \geq 1$ and a sequence $(\lambda_m)_{m\geq m_0}$ of homeomorphisms of $[0,\infty)$ into itself such that for all $m\geq m_0$ we have
\begin{equation*}
\sup\limits_{s\geq 0}|\lambda_m(s)-s| < \eta\mbox{ and }\sup\limits_{s\geq 0}\bar{d}(f^{m}(\lambda_m(s)),f(s)) < \eta
\end{equation*}
Suppose that $\lambda_m(T^{f}) > T^{f_m}$. Then necessarily,
\begin{equation*}
\bar{d}(f^{m}(T^{f_m}),f(\lambda_m^{-1}(T^{f_m}))) > \bar{d}(1/\epsilon,S)\wedge\bar{d}(\epsilon,I) > \eta
\end{equation*}
which contradicts the hypothesis. Similarly, if $\lambda_m(T^{f}) < T^{f_m}$, then
\begin{equation*}
\bar{d}(f^{m}(\lambda_m(T^{f})),f(T^{f})) > \bar{d}(1/\epsilon,f(T^{f})) > \eta
\end{equation*}
which also contradicts the hypothesis. Hence, $\lambda_m(T^{f}) = T^{f_m}$. And we conclude that
\begin{equation*}
(f^{m}(t\wedge T^{f_m}),t \geq 0) \rightarrow (f(t\wedge T^{f}),t \geq 0)
\end{equation*}
in $(\mathscr{D}([0,+\infty],[0,+\infty]),\bar{d}_{\infty})$. But these functions are elements of $\mathbb{D}(\mathbb{R}_+,\mathbb{R}_+)$, so the last convergence also holds in the usual Skorohod's topology (see the remark below Proposition 5 in~\cite{CaballeroLambertBravo09}).\\

To finish the proof, it suffices to apply these deterministic results to the processes $\rZ^m$ and $\rZ$ once we have verified that their trajectories fulfil the required assumptions a.s. Recall that a $\Psi$-CSBP $\rZ$ can be obtained via the Lamperti time change (see Appendix \ref{AppendixLamperti}) of a $\Psi$ L\'evy process $\rY$
\begin{equation*}
\rZ_t = \rY_{J(\rY)_t}
\end{equation*}
and that the map $t\mapsto J(\rY)_t$ is continuous.\\
As we have assumed that $\Psi$ is not the Laplace exponent of a compound Poisson process, we deduce that a.s.
$\rT_\epsilon = \inf\{s\geq 0: \rZ_s \notin [\epsilon,1/\epsilon]\}$ and that if $\rZ$ jumps at time $\rT_\epsilon$ then $\rZ_{\rT_\epsilon} > 1/\epsilon$ a.s. Moreover, $\inf\limits_{s\in[0,r]} \rZ_s > \epsilon$ a.s. for all $r\in [0,\rT_\epsilon)$, and also for $r=\rT_\epsilon$ when $\rZ$ jumps at $\rT_\epsilon$. Otherwise, the c\`adl\`ag inverse of the infimum of $\rZ$ would admit a fixed discontinuity at time $\epsilon$ with positive probability, but the latter is (the Lamperti time-change of) the opposite of a subordinator, and so, it does not admit any fixed discontinuity.\cqfd
\paragraph{Acknowledgements.} This is part of my PhD thesis. I would like to thank my supervisors, Julien Berestycki and Amaury Lambert, for their useful comments on a draft version of this paper as well as an anonymous referee for his/her careful reading of the manuscript.

%\bibliographystyle{siam}
%\bibliography{/home/cyril/Dropbox/LaTeX/biblio}

\begin{thebibliography}{10}

\bibitem{AldousCRT1}
{\sc D.~Aldous}, {\em The continuum random tree. {I}}, Ann. Probab., 19 (1991),
  pp.~1--28.

\bibitem{BertoinRandomFragmentation}
{\sc J.~Bertoin}, {\em Random fragmentation and coagulation processes},
  vol.~102 of Cambridge Studies in Advanced Mathematics, Cambridge University
  Press, Cambridge, 2006.

\bibitem{ProlificIndividuals}
{\sc J.~Bertoin, J.~Fontbona, and S.~Mart{\'{\i}}nez}, {\em On prolific
  individuals in a supercritical continuous-state branching process}, J. Appl.
  Probab., 45 (2008), pp.~714--726.

\bibitem{BertoinLeGall-0}
{\sc J.~Bertoin and J.-F. Le~Gall}, {\em The {B}olthausen-{S}znitman coalescent
  and the genealogy of continuous-state branching processes}, Probability
  Theory and Related Fields, 117 (2000), pp.~249--266.

\bibitem{BertoinLeGall-1}
\leavevmode\vrule height 2pt depth -1.6pt width 23pt, {\em Stochastic flows
  associated to coalescent processes}, Probability Theory and Related Fields,
  126 (2003), pp.~261--288.

\bibitem{BertoinLeGall-3}
\leavevmode\vrule height 2pt depth -1.6pt width 23pt, {\em Stochastic flows
  associated to coalescent processes {III}: {L}imit theorems}, Illinois J.
  Math., 50 (2006), pp.~147--181.

\bibitem{Article7}
{\sc M.~Birkner, J.~Blath, M.~Capaldo, A.~M. Etheridge, M.~M\"ohle,
  J.~Schweinsberg, and A.~Wakolbinger}, {\em Alpha-stable branching and
  beta-coalescents}, Electronic Journal of Probability, 10 (2005),
  pp.~303--325.

\bibitem{CaballeroLambertBravo09}
{\sc M.-E. Caballero, A.~Lambert, and G.~Uribe~Bravo}, {\em Proof(s) of the
  {L}amperti representation of continuous-state branching processes},
  Probability Surveys, 6 (2009), pp.~62--89.

\bibitem{Dawson93}
{\sc D.~A. Dawson}, {\em Measure-valued {M}arkov processes}, vol.~1541 of
  Lecture Notes in Math., Springer, Berlin, 1993.

\bibitem{DawsonPerkins91}
{\sc D.~A. Dawson and E.~A. Perkins}, {\em Historical processes}, Mem. Amer.
  Math. Soc., 93 (1991), pp.~iv+179.

\bibitem{DK99}
{\sc P.~Donnelly and T.~G. Kurtz}, {\em {Particle representations for
  measure-valued population models}}, Ann. Probab., 27 (1999), pp.~166--205.

\bibitem{DuquesneLabbe13}
{\sc T.~Duquesne and C.~Labb{\'e}}, {\em {On the Eve property for CSBP}},
  Electronic Journal of Probability, 19, no.~6 (2014).

\bibitem{DLG02}
{\sc T.~Duquesne and J.-F. Le~Gall}, {\em Random trees, {L}\'evy processes and
  spatial branching processes}, Ast\'erisque,  (2002), pp.~vi+147.

\bibitem{DuquesneWinkel07}
{\sc T.~Duquesne and M.~Winkel}, {\em Growth of {L}\'evy trees}, Probability
  Theory and Related Fields, 139 (2007), pp.~313--371.

\bibitem{ElKarouiRoelly91}
{\sc N.~El~Karoui and S.~Roelly}, {\em Propri\'et\'es de martingales, explosion et
  repr\'esentation de {L}\'evy-{K}hintchine d'une classe de processus de
  branchement \`a valeurs mesures}, Stochastic Processes and their Applications,
  38 (1991), pp.~239--266.

\bibitem{GrevenPfaffelhuberWinter12}
{\sc A.~Greven, P.~Pfaffelhuber, and A.~Winter}, {\em Tree-valued resampling
  dynamics martingale problems and applications}, Probab. Theory Related
  Fields, 155 (2013), pp.~789--838.

\bibitem{GrevenPopovicWinter09}
{\sc A.~Greven, L.~Popovic, and A.~Winter}, {\em Genealogy of catalytic
  branching models}, Ann. Appl. Probab., 19 (2009), pp.~1232--1272.

\bibitem{Grey74}
{\sc D.~R. Grey}, {\em Asymptotic behaviour of continuous time, continuous
  state-space branching processes}, J. Appl. Probability, 11 (1974),
  pp.~669--677.

\bibitem{JacodShiryaev}
{\sc J.~Jacod and A.~N. Shiryaev}, {\em Limit theorems for stochastic
  processes}, vol.~288 of Grundlehren der Mathematischen Wissenschaften
  [Fundamental Principles of Mathematical Sciences], Springer-Verlag, Berlin,
  second~ed., 2003.

\bibitem{Jirina58}
{\sc M.~Ji{\v{r}}ina}, {\em Stochastic branching processes with continuous
  state space}, Czechoslovak Math. J., 8 (83) (1958), pp.~292--313.

\bibitem{KallenbergBook}
{\sc O.~Kallenberg}, {\em Foundations of modern probability}, Probability and
  its Applications (New York), Springer-Verlag, New York, second~ed., 2002.

\bibitem{Labbe11}
{\sc C.~{Labb{\'e}}}, {\em {From flows of Lambda Fleming-Viot processes to
  lookdown processes via flows of partitions}}, Electronic Journal of Probability, 19, no.~55 (2014).

\bibitem{LeGallLeJan98}
{\sc J.-F. Le~Gall and Y.~Le~Jan}, {\em Branching processes in {L}\'evy
  processes: the exploration process}, Ann. Probab., 26 (1998), pp.~213--252.

\bibitem{Pitman99}
{\sc J.~Pitman}, {\em Coalescents with multiple collisions}, Ann. Probab., 27
  (1999), pp.~1870--1902.

\bibitem{Silverstein68}
{\sc M.~L. Silverstein}, {\em A new approach to local times}, J. Math. Mech.,
  17 (1967/1968), pp.~1023--1054.

\bibitem{Tribe92}
{\sc R.~Tribe}, {\em The behavior of superprocesses near extinction}, Ann.
  Probab., 20 (1992), pp.~286--311.

\bibitem{Watanabe68}
{\sc S.~Watanabe}, {\em A limit theorem of branching processes and continuous
  state branching processes}, J. Math. Kyoto Univ., 8 (1968), pp.~141--167.

\end{thebibliography}

\end{document}